\newtheorem{thm}{Theorem}[section]
\newtheorem{theoremalpha}{Theorem}
\newtheorem{lemma}[thm]{Lemma}
\newtheorem{prop}[thm]{Proposition}
\newtheorem{cor}[thm]{Corollary}
\newtheorem{fact}[thm]{Fact}
\newtheorem{conj}[thm]{Conjecture}
\theoremstyle{remark}
\newtheorem{remark}[thm]{Remark}
\theoremstyle{definition}
\newtheorem{defi}[thm]{Definition}
\newtheorem{nota}[thm]{}
\newtheorem{notation}[thm]{}
\newtheorem{ass}[thm]{Assumption}
\numberwithin{equation}{section}
\newenvironment{sis}{\left\{\begin{aligned}}{\end{aligned}\right.}
\newcommand{\w}{\widetilde}
\newcommand{\ov}{\overline}
\newcommand{\h}{\widehat}
\newcommand{\un}{\underline}
\newcommand{\Aut}{\operatorname{Aut}}
\newcommand{\Sym}{\operatorname{Sym}}
\newcommand{\Hom}{\operatorname{Hom}}
\renewcommand{\Im}{\operatorname{Im}}
\newcommand{\coker}{\operatorname{coker}}
\newcommand{\Pic}{\operatorname{Pic}}
\newcommand{\Br}{\operatorname{Br}}
\newcommand{\Cl}{\operatorname{Cl}}
\newcommand{\id}{\operatorname{id}}
\newcommand{\obs}{\operatorname{obs}}
\newcommand{\res}{\operatorname{res}}
\newcommand{\Spf}{\operatorname{Spf}}
\newcommand{\Def}{\operatorname{Def}}
\newcommand{\ch}{\operatorname{ch}}
\newcommand{\Td}{\operatorname{Td}}
\newcommand{\CC}{\mathbb{C}}
\def\L{\mathcal L}
\def \M{\mathcal M}
\def\N{\mathbb N}
\def\O{\mathcal O}
\def \I {\mathcal I}
\def \D{\mathcal D}
\def \C{\mathcal C}
\newcommand{\Q}{\mathbb{Q}}
\def\X{\mathcal X}
\def\Y{\mathcal Y}
\def \F{\mathcal F}
\newcommand{\Z}{\mathbb{Z}}
\renewcommand{\P}{\mathbb P}
\def\NN{\mathcal N}
\newcommand{\mcd}{(d-g+1,2g-2)}
\newcommand{\kdg}{k_{d,g}}
\newcommand{\edg}{e_{d,g}}
\def\pdb{\overline{\mathcal{J}}_{d,g}}
\def\pd{{\mathcal{J}_{d,g}}}
\def \pdo{{\mathcal J}^0_{d,g}}
\def\pmid{{\mathcal J}_{g-1,g}}
\def\pmidt{{\mathcal Jac}_{g-1,g}}
\def\pmidtb{\ov{{\mathcal Jac}}_{g-1,g}}
\def\unimid{{\mathcal Jac}_{g-1,g,1}}
\def\pdt{{\mathcal Jac}_{d,g}}
\def\pdtb{{\overline{\mathcal Jac}}_{d,g}}
\def\mg{\mathcal{M}_g}
\def\mgb{\overline{\mathcal{M}}_g}
\def \mguniv{\ov{M}_{g,1}}
\def \skd{\mathcal{S}_g^{1/\kdg}}
\def\univb{\overline{\mathcal Jac}_{d,g,1}}
\def \univ{{\mathcal Jac}_{d,g,1}}
\def \u{{\mathcal J}_{d, g, 1}}
\def \RPic{\mathcal{R}{\rm Pic}}
\def \Pict{\Pic^{\rm taut}}
\def \Pics{{\mathcal Jac}}
\newcommand{\Gm}{\mathbb{G}_m}
\newcommand{\Hol}{\rm Hol}
\begin{document}

\title[Picard group of the compactified universal Jacobian]
% over {$\mgb$}]
{The Picard group of the compactified universal Jacobian}

\author{Margarida Melo and Filippo Viviani}
\address{Departamento de Matem\'atica da Universidade de Coimbra,
Largo D. Dinis, Apartado 3008, 3001 Coimbra (Portugal) \text{ and } Dipartimento di Matematica,
Universit\`a Roma Tre,
Largo S. Leonardo Murialdo 1,
00146 Roma (Italy).}
\email{mmelo@mat.uc.pt, filippo.viviani@gmail.com}

%\author{Filippo Viviani}
%\address{
%Dipartimento di Matematica,
%Universit\`a Roma Tre,
%Largo S. Leonardo Murialdo 1,
%00146 Roma (Italy)}
%\email{viviani@mat.uniroma3.it}

\thanks{The first author was supported by the FCT project \textit{Espa\c cos de Moduli em Geometria Alg\'ebrica} (PTDC/MAT/111332/2009), by the FCT project \textit{Geometria Alg\'ebrica em Portugal} (PTDC/MAT/099275/2008) and by the Funda\c c\~ao Calouste Gulbenkian program ``Est\'imulo \`a investiga\c c\~ao 2010''. The second author is a member of the research center CMUC (University of Coimbra) and he was supported by  the FCT project \textit{Espa\c cos de Moduli em Geometria Alg\'ebrica} (PTDC/ MAT/111332/2009) and by MIUR--FIRB project \textit{Spazi di moduli e applicazioni}. }

\keywords{Universal Jacobian stack and scheme, Picard group, $\Gm$-gerbe, Brauer group, Poincar\'e line bundle.}

\subjclass[2000]{14H10, 14H40, 14C22; 14A20, 14L24.}

\maketitle

\begin{center}
\emph{Dedicated to the memory of Torsten Ekedahl, with great admiration.}
\end{center}

\begin{abstract}
We determine explicitly the Picard groups of the universal Jacobian stack
%over the stack of smooth curves
and of its compactification over the stack of stable curves.
Along the way, we prove some results concerning the gerbe structure of the universal Jacobian stack over its rigidification by the natural action of the multiplicative group
%by scalar multiplication on the line bundles
and relate this with the existence of generalized Poincar\'e line bundles.
%on the universal Jacobian stack.
We also compare our results with Kouvidakis-Fontanari computations of the divisor class group of the universal (compactified) Jacobian scheme.
\end{abstract}

\section{introduction}

%\begin{nota}

The Picard group of a given moduli stack
%is one of the first invariants one tries to compute, since it gives
carries important informations on the geometry of the moduli problem one is dealing with.
Since Mumford's pioneer work in \cite{Mum},
the subject has been widely developed and nowadays the literature on the computation of the Picard group of moduli stacks is quite vast.
Remarkable examples are the Picard groups of the
%There is a vast literature
%on the computation of the Picard group of moduli stacks, e.g.
moduli stacks of curves possibly with level structures
(see e.g. \cite{arbcorn}, \cite{Cor1}, \cite{kou3}, \cite{Jar}, \cite{Mor},
\cite{Cor2}, \cite{GV},  \cite{Put}) and of the moduli stacks of principal bundles over curves
(see  e.g. \cite{DN}, \cite{kouvidakis}, \cite{kou2}, \cite{BL}, \cite{KN}, \cite{LS}, \cite{BLS}, \cite{Sor}, \cite{Fal}, \cite{BK},  \cite{BH}).
% and the references therein.

\vspace{0,2cm}

The aim of this paper is to compute and give explicit generators for the Picard group  of the degree-$d$ universal Jacobian stack $\pdt$ over the moduli stack $\mg$ of smooth curves of
genus $g$ and of its  compactification
$\pdtb$ over the moduli stack $\mgb$ of stable curves of genus $g$, constructed by Caporaso in \cite{cap} and \cite{capneron} and later
generalized by the first author in \cite{melo}. Moreover, we will compare our results with the computation of the divisor class group of the Caporaso's universal compactified Jacobian
scheme $\ov{J}_{d,g}$, carried out by Fontanari in \cite{fontanari} (based upon the work of Kouvidakis in \cite{kouvidakis}). The motivation for this work comes from
the wish of understanding the (log)canonical model of $\ov{J}_{d,g}$ and its relation to the different modular compactifications of the universal Jacobian. The Kodaira dimension and the
Iitaka fibration of $\ov{J}_{d,g}$ were computed by Farkas-Verra in \cite{FV} for $d=g$, by Bini, Fontanari and the second author in \cite{BFV} when $\ov{J}_{d,g}$ has finite quotient
singularities (which occurs exactly when $d+g-1$ and $2g-2$ are coprime) and  by Casalaina-Martin, Kass and the second author in \cite{CMKVb} in the general case.
An alternative compactification $\ov{J}_{d,g}^{\rm ps}$ of the universal Jacobian over Schubert's moduli space $\ov{M}_g^{\rm ps}$ of pseudo-stable curves
was recently found by G. Bini, F. Felici and the  two authors  in \cite{BMV} (see also \cite{BMV2}). We expect that $\ov{J}_{d,g}^{\rm ps}$ is the first step towards the construction of the
canonical model of $\ov{J}_{d,g}$, analogously to the fact that $\ov{M}_g^{\rm ps}$ is the first step towards the construction of the canonical model of $\ov{M}_g$ (see \cite{HH}).
Clearly, in order to verify this, one needs an explicit description of the
(rational) Picard group of $\ov{J}_{d,g}$, which naturally embeds into the (rational) Picard group  of the stack $\pdtb$.

\vspace{0,2cm}

Before describing our results, we need to  briefly recall the definitions of the stacks $\pdt$ and $\pdtb$, referring to Section 2 for more details.
%$\pdt$ is the stack whose fiber over a scheme $S$ consists of families of smooth curves $\C\to S$ over $S$ endowed
%with  a line bundle $\L$ over $\C$ of relative degree $d$ over $S$.
% and whose morphisms are the natural isomorphisms.
The degree-$d$ universal Jacobian stack $\pdt$ is the (Artin) stack whose fiber over a scheme $S$ consists of families of smooth curves $\C\to S$ over $S$ endowed with  a line bundle $\L$ over $\C$ of relative degree $d$ over $S$.
The stack $\pdt$ is contained as a dense open substack in the degree-$d$ compactified Jacobian stack $\pdtb$,  whose fiber over a scheme $S$
%was described by Melo on \cite{melo} based on previous work by Caporaso on \cite{cap} and \cite{capneron}.
consists of families of {\em quasistable} curves $\X\to S$ endowed with a {\em properly balanced} line bundle over $\X$ of relative degree $d$ over $S$ (see \ref{desc-stacks} for
the definitions).
%The stack $\pdtb$ contains the dense open substack $\pdt$,
The stack $\pdtb$ is smooth and irreducible of dimension $4g-4$, and it is endowed with a (forgetful) universally closed surjective morphism $\w{\Phi}_d$ to the stack $\mgb$ of stable curves.

The stack $\pdtb$ is  naturally endowed with the structure of a $\Gm$-stack, since
the group $\Gm$ naturally injects into the automorphism group of every object $(\C\to S, \L)\in \pdtb(S)$ as multiplication
by  scalars on $\L$. Therefore $\pdtb$ becomes a $\Gm$-gerbe over the $\Gm$-rigidification $\pdb:=\pdtb\fatslash \Gm$.
We call $\nu_d: \pdtb\to \pdb$ the rigidification map.
Analogously, $\pdt$ is a $\Gm$-gerbe over its rigidification $\pd:=\pdt\fatslash \Gm$ which is an open dense substack of $\pdb$.
%With a slight abuse of notation, we still denote by $\nu_d: \pdt\to \pd$ the rigidification map.
The stack $\pdb$ is smooth and irreducible of dimension $4g-3$, and the morphism $\w{\Phi}_d:\pdtb\to \mgb$ factors through  $\Phi_d: \pdb \to \mgb$, which is again a
universally closed surjective morphism.

Caporaso's compactification $\ov{J}_{d,g}$ of the universal Jacobian variety $J_{d,g}$ over the moduli scheme $\ov M_g$ of stable curves (see \cite{cap}) is an adequate moduli space for $\pdtb$ and for $\pdb$ (in the sense of \cite{alper2}) and even a good
moduli space (in the sense of \cite{alper}) if our base field $k$ has characteristic zero. We will call it simply the moduli space
for $\pdtb$ and for $\pdb$\footnote{In the literature, the universal (resp. universal compactified) Jacobian stack is often called the universal (resp. universal compactified) Picard stack and it is denoted by  ${\mathcal Pic}_{d,g}$ (resp. $\ov{\mathcal Pic}_{d,g}$), see e.g. \cite{capneron}, \cite{melo}, \cite{BFV}. Similarly the universal (resp. universal compactified) Jacobian scheme is often called
the universal (resp. universal compactified) Picard scheme and it is denoted by $P_{d,g}$ (resp. $\ov{P}_{d,g}$),
see e.g. \cite{cap}. Following \cite{CMKV} and \cite{BMV}, we prefer here to use the word universal (resp. universal compactified) Jacobian stack/scheme and consequently the symbols $\pdt$, $\pdtb$, $J_{d,g}$ and $\ov{J}_{d,g}$
for two reasons:
(i) the word Jacobian stack/scheme is used only for curves while the word Picard stack/scheme is used also for varieties of higher dimensions and therefore it is more ambiguous; (ii) the expression ``the Picard group of the Picard stack/scheme" seems a bit cacophonic.}.

% that we are going to use in the sequel of the paper.  In Section 2, we briefly review the definition of the stack
%$\pdb$ and of its good moduli space $\ov{J}_{d,g}$. We also recall the presentation of $\pdb$ as a quotient stack
%and the construction of $\ov{J}_{d,g}$ as a GIT-quotient.
% and it is indeed a coarse moduli space if and only if $(d-g+1,2g-2)=1$
%(see \ref{des-quot-stacks} for details) .

%\end{nota}

\vspace{0,2cm}

The main result of this paper is a description of the Picard groups of the stacks $\pdt$ and $\pd$ and of their compactifications $\pdtb$ and $\pdb$.
Since $\pdt\subset \pdtb$ and $\pd\subset \pdb$ are open
inclusions of smooth stacks, the natural restriction morphisms $\Pic(\pdtb)\to \Pic(\pdt)$ and $\Pic(\pdb)\to \Pic(\pd)$
are surjective. Moreover, since $\nu_d$ is a $\Gm$-gerbe, the pull-back morphisms $\nu_d^*:\Pic(\pdb)\to \Pic(\pdtb)$
and $\nu_d^*:\Pic(\pd)\to \Pic(\pdt)$ are injective. Therefore, the above Picard groups are related by the following
commutative diagram
\begin{equation}\label{E:4-Picard}
\xymatrix{
\Pic(\pdtb) \ar@{->>}[r] & \Pic(\pdt) \\
\Pic(\pdb) \ar@{^{(}->}[u]^{\nu_d^*} \ar@{->>}[r] & \Pic(\pd) \ar@{^{(}->}[u]^{\nu_d^*}
}
\end{equation}
in which the horizontal arrows are surjective and the vertical arrows are injective.
We will prove that the four Picard groups of diagram \eqref{E:4-Picard}
are generated by boundary line bundles and tautological line bundles, which we are now going to define.

In Section \ref{S:bound-div}, we describe the irreducible components of the
boundary divisor $\pdtb\setminus \pdt$.
Clearly, the boundary of $\pdtb$ is the pull-back via the morphism $\w{\Phi}_d:\pdtb\to \mgb$ of the boundary of $\mgb$.
Recall that $\displaystyle \mgb\setminus \mg=\bigcup_{i=0}^{[g/2]}\delta_i$, where
$\delta_0$ is the irreducible divisor whose generic point is an irreducible curve with one node
and, for $i=1,\ldots,[g/2]$, $\delta_i$ is the irreducible divisor whose generic point is the stable curve
with two irreducible components of genera $i$ and $g-i$ meeting in one point.
In Theorem \ref{bound-pdb}, we prove that $\w{\delta}_i:=\w{\Phi}_d^{-1}(\delta_i)$ is irreducible if either $i=0$ or $i=g/2$ or the number $\frac{2g-2}{(2g-2,d+g-1)}$ does not divide $(2i-1)$
and, otherwise, that $\w{\Phi}_d^{-1}(\delta_i)$ is the union of two irreducible divisors, that we call $\w{\delta}_i^1$ and $\w{\delta}_i^2$ (see Section \ref{S:bound-div} for the precise description of these two divisors).
Since $\pdtb$ is a smooth stack, the boundary divisors $\{\w{\delta}_i, \w{\delta}_i^1, \w{\delta}_i^2\}$ are Cartier
divisors and therefore they give rise to line bundles on $\pdtb$ that we denote by $\{\O(\w{\delta}_i),
\O(\w{\delta}_i^1), \O(\w{\delta}_i^2)\}$ and we call the \emph{boundary line bundles} of $\pdtb$.
Note that the irreducible components of the boundary of $\pdb$ are the divisors
$\ov{\delta}_i:=\nu_d(\w{\delta}_i)$, $\ov{\delta}_i^1:=\nu_d(\w{\delta}_i^1)$ and
$\ov{\delta}_i^2:=\nu_d(\w{\delta}_i^2)$. The associated line bundles $\{\O(\ov{\delta}_i),
\O(\ov{\delta}_i^1), \O(\ov{\delta}_i^2)\}$ are called boundary line bundles of $\pdb$ and clearly
we have that $\nu_d^*\O(\ov{\delta}_i)=\O(\w{\delta}_i)$,  $\nu_d^*\O(\ov{\delta}_i^1)=\O(\w{\delta}_i^1)$
and $\nu_d^*\O(\ov{\delta}_i^2)=\O(\w{\delta}_i^2)$ (see Corollary \ref{C:bound-rig}).

In Section \ref{S:taut-lb}, we introduce the line bundles $K_{1,0}$, $K_{0,1}$, $K_{-1,2}$ and
$\Lambda(m,n)$ (for $n,m\in \Z$) on $\pdtb$, which we call \emph{tautological line bundles}.
The tautological line bundles are defined in terms of the determinant
of cohomology $d_{\pi}(-)$ and of the Deligne pairing $\langle - , - \rangle_{\pi}$ applied to the universal family $\pi:\univb\to \pdtb$ (see \S \ref{Pic-stack} for the definition and basic properties
of the determinant of cohomology and of the Deligne pairing). More precisely, we define
\begin{equation*}
\begin{aligned}
&K_{1,0}:=\langle \omega_{\pi}, \omega_{\pi} \rangle_{\pi}, \\
&K_{0,1}:=\langle \omega_{\pi}, \L_d \rangle_{\pi}, \\
&K_{-1,2}:=\langle \L_d, \L_d \rangle_{\pi}, \\
&\Lambda(n,m)=d_{\pi}(\omega_{\pi}^n\otimes \L_d^m),
\end{aligned}
\end{equation*}
where $\omega_{\pi}$ is the relative dualizing sheaf for $\pi$ and $\L_d$ is the universal line bundle
on $\univb$. 
Following a strategy due to Mumford  \cite{Mum3}, we apply
the Grothendieck-Riemann-Roch theorem to the morphism $\pi:\univb\to \pdtb$ in order to produce relations among the
tautological line bundles, at least in the rational Picard group. In particular, we prove in Theorem \ref{T:taut-rel}
that all the tautological line bundles can be expressed in $\Pic(\pdtb)\otimes \Q$ in terms of $\Lambda(1,0)$, $\Lambda(0,1)$ and $\Lambda(1,1)$.
Therefore, we define the tautological subgroup $\Pict(\pdtb)\subseteq \Pic(\pdtb)$ as the subgroup generated
by the line bundles $\Lambda(1,0)$, $\Lambda(0,1)$, $\Lambda(1,1)$
 together with the boundary line bundles of $\pdtb$.
Similarly, we consider the subgroup $\Pict(\pdt)\subseteq
\Pic(\pdt)$ generated by the restriction of $\Lambda(1,0)$, $\Lambda(0,1)$, $\Lambda(1,1)$ to $\pdt$. 
Moreover, using the pull-back morphism $\nu_d^*$ (see diagram \eqref{E:4-Picard}), we can define
the tautological subgroups $\Pict(\pdb):=(\nu_d^*)^{-1}(\Pict(\pdb))\subseteq \Pic(\pdb)$ and
$\Pict(\pd):=(\nu_d^*)^{-1}(\Pict(\pd))\subseteq \Pic(\pd)$.

%We define the tautological subgroup $\Pict(\pdtb)\subseteq \Pic(\pdtb)$ as the subgroup generated by the tautological line bundles together with the boundary line bundles of $\pdtb$. Similarly, we can restrict the tautological line bundles to $\pdt$ and consider the subgroup $\Pict(\pdt)\subseteq \Pic(\pdt)$ generated by them. Moreover, using the pull-back morphism $\nu_d^*$ (see diagram \eqref{E:4-Picard}), we can define the tautological subgroups $\Pict(\pdb):=(\nu_d^*)^{-1}(\Pict(\pdb))\subseteq \Pic(\pdb)$ and $\Pict(\pd):=(\nu_d^*)^{-1}(\Pict(\pd))\subseteq \Pic(\pd)$. Following a strategy due to Mumford  \cite{Mum3}, we next apply the Grothendieck-Riemann-Roch theorem to the morphism $\pi:\univb\to \pdtb$ in order to produce relations among the tautological line bundles. In particular, we prove in Theorem \ref{T:taut-rel} that all the tautological line bundles can be expressed in terms of $\Lambda(1,0)$, $\Lambda(0,1)$ and $\Lambda(1,1)$. In particular, the tautological subgroup $\Pict(\pdt)$ (resp. $\Pict(\pdtb)$) is generated by the three line bundles $\Lambda(1,0)$, $\Lambda(0,1)$ and $\Lambda(1,1)$ (resp. and the boundary line bundles).

\vspace{0,2cm}

After these preliminaries, we can now state the main results of this paper, concerning the Picard groups of
$\pdt$ and $\pd$ and of their compactifications $\pdtb$ and $\pdb$. We prove that all the Picard groups in question
are free and generated by tautological line bundles and boundary line bundles (if any). More precisely, we have the following.

\begin{theoremalpha}\label{T:MainThmA}
Assume that $g\geq 3$.
\begin{enumerate}[(i)]
\item \label{T:MainThmA1} The Picard group of $\pdt$ is freely generated by
$\Lambda(1,0)$, $\Lambda(0,1)$ and $\Lambda(1,1)$.
\item \label{T:MainThmA2} The Picard group of $\pdtb$ is freely generated by the boundary line bundles
and the tautological line bundles $\Lambda(1,0)$, $\Lambda(0,1)$ and $\Lambda(1,1)$.
\end{enumerate}
\end{theoremalpha}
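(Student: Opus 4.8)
My plan is to relate the two rows of the diagram \eqref{E:4-Picard} through the $\Gm$-gerbe $\nu_d$, to compute $\Pic(\pdt)$ by means of three homomorphisms that together detect every line bundle, and then to deduce the compactified statement (\ref{T:MainThmA2}) from (\ref{T:MainThmA1}) by an excision argument along the boundary divisors of Theorem \ref{bound-pdb}. Throughout I use that, for $g\geq 3$, the group $\Pic(\mg)$ is freely generated by the Hodge class $\lambda$, whose pull-back to $\pdt$ is $\Lambda(1,0)=d_{\pi}(\omega_{\pi})$ (Arbarello--Cornalba); this is exactly where the hypothesis $g\geq 3$ enters, since it forces all four groups in \eqref{E:4-Picard} to be torsion free.

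I would organize the smooth case around three homomorphisms out of $\Pic(\pdt)$: the weight $\operatorname{wt}\colon\Pic(\pdt)\to\Z$ recording the $\Gm$-action of the gerbe $\nu_d$ on a fibre $B\Gm$; the restriction $\rho$ to the N\'eron--Severi group of a general Jacobian fibre of $\pdt\to\mg$, which for $g\geq 3$ is $\Z\cdot\theta$ with $\theta$ the principal polarization; and the pull-back $h$ to $\Pic(\mg)=\Z\lambda$ of a class that is fibrewise numerically trivial. The first step is to prove that $(\operatorname{wt},\rho,h)$ is injective, i.e. that a line bundle with trivial weight, trivial fibrewise class and trivial Hodge part is itself trivial. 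Such a class has weight $0$, hence descends along $\nu_d$ to $\pd$; being fibrewise numerically trivial, the theorem of the cube writes it as a pull-back from $\mg$ plus a class that is translation-invariant along the fibres, that is, a section of the relative Jacobian over $\mg$; the pull-back part is killed by $h$, while the section is forced to be trivial by a Franchetta-type statement to the effect that every line bundle on the universal curve is generated by $\omega_{\pi}$ and pull-backs from $\mg$. Thus $\Pic(\pdt)$ embeds in $\Z^3$, so it is free of rank at most $3$.

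It remains to identify the image of $(\operatorname{wt},\rho,h)$ with the sublattice spanned by the three tautological classes. A Riemann--Roch count gives $\operatorname{wt}(\Lambda(n,m))=m\bigl(n(2g-2)+md+1-g\bigr)$, so that $\Lambda(1,0),\Lambda(0,1),\Lambda(1,1)$ have weights $0,\,d-g+1,\,d+g-1$; combined with the values of $\rho$ (namely $0,\,-1,\,-1$, the last two being equal because twisting the Poincar\'e bundle by $\omega_{\pi}$ does not alter its fibrewise N\'eron--Severi class) and with $h$ being nonzero on $\Lambda(1,0)=\lambda$, one checks that the three images are independent, the relevant determinant being a nonzero multiple of $2g-2$. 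Hence the tautological classes span a finite index subgroup, and to conclude that they generate $\Pic(\pdt)$ I must pin down two obstruction-theoretic quantities: the image of $\operatorname{wt}$, and the finest multiple of $\theta$ carried by a line bundle on $\pd$. Both are controlled by the (non)existence of a universal Poincar\'e line bundle and equal $\mcd\,\Z$ and $\frac{2g-2}{\mcd}\,\theta$ respectively; establishing this is the main obstacle, and it is precisely the gerbe/Brauer phenomenon advertised in the introduction. Granting it, the computed weights and $\rho$-values show that $\mcd$ is attained by an integer combination of $\Lambda(0,1),\Lambda(1,1)$ and that $\frac{2g-2}{\mcd}\,\theta$ is attained by a weight-zero one, so that the tautological classes surject onto each graded piece of the filtration $\Pic(\pdt)\supseteq\ker\operatorname{wt}\supseteq\ker\operatorname{wt}\cap\ker\rho$; the index is therefore $1$, which proves (\ref{T:MainThmA1}).

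For part (\ref{T:MainThmA2}) I would run the excision sequence of the smooth open immersion $\pdt\hookrightarrow\pdtb$, whose complement is the union of the boundary divisors $\{\w{\delta}_i,\w{\delta}_i^1,\w{\delta}_i^2\}$ of Theorem \ref{bound-pdb}. Since $\pdtb$ is smooth, the sequence
\[
\bigoplus_{Z}\Z\cdot\O(Z)\longrightarrow \Pic(\pdtb)\longrightarrow \Pic(\pdt)\longrightarrow 0
\]
is exact, $Z$ ranging over the boundary components. The tautological classes extend over $\pdtb$ and restrict to the generators of $\Pic(\pdt)$ found above, giving a splitting; it therefore suffices to show that the boundary line bundles are linearly independent, independence from the tautological part being automatic. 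Mutual independence of the boundary classes follows by testing them against suitable one-parameter families of quasistable curves: the classes $\delta_i$ are already independent on $\mgb$, and the multidegree of $\L_d$ separates the two components $\w{\delta}_i^1,\w{\delta}_i^2$ of a reducible fibre $\w{\Phi}_d^{-1}(\delta_i)$. This exhibits $\Pic(\pdtb)$ as the free group on the boundary line bundles together with $\Lambda(1,0),\Lambda(0,1),\Lambda(1,1)$, completing the proof.
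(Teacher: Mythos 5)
Your architecture is essentially the paper's: you detect a line bundle on $\pdt$ by its weight under the $\Gm$-gerbe $\nu_d$, by its class in $NS(J^d(C))$ over a very general fibre, and by its Hodge part, and you deduce part (\ref{T:MainThmA2}) from part (\ref{T:MainThmA1}) by excision plus independence of the boundary classes tested on one-parameter families. The genuine differences are cosmetic or complementary: you package the three invariants as a single embedding into $\Z^3$ instead of the paper's two-step filtration (first $\nu_d^*$ via the Leray sequence \eqref{succ-Pic}, then $\Phi_d^*$ and $\chi_d$ via \eqref{ex-Pic-open}), and you sketch a direct proof of $\ker\chi_d=\Im\Phi_d^*$ via the theorem of the cube and a Franchetta-type statement where the paper simply quotes Kouvidakis (Theorem \ref{T:Kouvi}(\ref{T:Kouvi1})). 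However, the proposal has real gaps at exactly the points where the work lies.

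First, you explicitly ``grant'' the two crucial inputs: that the image of the weight map is $\mcd\cdot\Z$, and that the fibrewise N\'eron--Severi class of any element of $\Pic(\pd)$ is a multiple of $\frac{2g-2}{\mcd}\,\theta_C$. The first is Theorem \ref{order-gerbe}, which the paper obtains by reinterpreting the order of $[\nu_d]$ in $\Br(\pd)$ as the minimal $m$ admitting an $m$-Poincar\'e bundle (Proposition \ref{Poinc-order}, requiring the auxiliary gerbes $\pdt^m$) and then invoking Kouvidakis; the second is Theorem \ref{T:Kouvi}(\ref{T:Kouvi2}). Without these, your lattice argument only shows the tautological classes span a finite-index subgroup. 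Second, your claimed values $\rho(\Lambda(0,1))=\rho(\Lambda(1,1))=-1$ are not well posed as stated: these classes have nonzero weight, so their restriction to a fibre lives on the gerbe $\Pics^d(C)$ and its image in $NS(J^d(C))$ depends on the chosen trivialization (two Poincar\'e bundles for $J^d(C)$ differ by $p_2^*(N)$, which shifts $d_{p_2}$ by $N^{d+1-g}$ and hence shifts the N\'eron--Severi class by $(d+1-g)[N]$). Only the weight-zero combination $\Xi$ has a well-defined $\rho$-value, and computing $\rho_C(\Xi)=\frac{2g-2}{\mcd}\theta_C$ is precisely the content of Theorem \ref{T:chi-taut}, which requires normalizing the Poincar\'e bundle by condition (*) and the two Claims there; your one-line justification (``twisting by $\omega_\pi$ does not alter the fibrewise class'') explains why the two values agree but not why either equals $\pm\theta_C$. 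Finally, for part (\ref{T:MainThmA2}), independence of the pulled-back classes $\w{\Phi}_d^*\O(\delta_i)$ does not follow formally from independence of the $\O(\delta_i)$ on $\mgb$, and separating $\w{\delta}_i^1$ from $\w{\delta}_i^2$ requires exhibiting actual families of quasistable curves with properly balanced line bundles meeting one component and not the other (the families $\w{F}$, $\w{F_j'}$, $\w{F_{h,j}}$ of Section \ref{S:indip-bound}); your appeal to ``the multidegree of $\L_d$'' names the right invariant but does not construct the test curves.
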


\begin{theoremalpha}\label{T:MainThmB}
Assume that $g\geq 3$.
\begin{enumerate}[(i)]
\item \label{T:MainThmB1} The Picard group of $\pd$ is freely generated by the tautological line bundles
$\Lambda(1,0)$ and
\begin{equation}\label{E:lb-Xi}
\Xi:= \Lambda(0,1)^{ \frac{d+g-1}{(d+g-1,d-g+1)}}\otimes \Lambda(1,1)^{-\frac{d-g+1}{(d+g-1,d-g+1)}}.
\end{equation}
\item \label{T:MainThmB2} The Picard group of $\pdb$ is freely generated by the boundary line bundles and the tautological
line bundles $\Lambda(1,0)$ and $\Xi$.
%$$\Xi:= \Lambda(0,1)^{ \frac{d+g-1}{(d+g-1,d-g+1)}}\otimes \Lambda(1,1)^{-\frac{d-g+1}{(d+g-1,d-g+1)}}.$$
\end{enumerate}
\end{theoremalpha}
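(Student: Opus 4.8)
The plan is to deduce Theorem~\ref{T:MainThmB} from Theorem~\ref{T:MainThmA} by exploiting the $\Gm$-gerbe structure of the rigidification maps $\nu_d\colon \pdtb\to \pdb$ and $\nu_d\colon \pdt\to \pd$; in particular the hypothesis $g\geq 3$ will enter only through Theorem~\ref{T:MainThmA}. The key input is that for a $\Gm$-gerbe $\nu\colon \X\to X$ every line bundle on $\X$ carries a well-defined $\Gm$-weight $\operatorname{wt}\in\Z$, namely the weight of the action of the inertial $\Gm$ on its fibres, and that the resulting homomorphism fits into an exact sequence
\begin{equation*}
0\to \Pic(X)\xrightarrow{\nu^*}\Pic(\X)\xrightarrow{\operatorname{wt}}\Z.
\end{equation*}
Together with the injectivity of $\nu_d^*$ noted above, this identifies $\Pic(\pdb)$ (resp.\ $\Pic(\pd)$) with the subgroup of weight-zero line bundles inside $\Pic(\pdtb)$ (resp.\ $\Pic(\pdt)$). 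Since Theorem~\ref{T:MainThmA} furnishes explicit free generators of the latter two groups, everything reduces to computing the weights of these generators and reading off the weight-zero subgroup.

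First I would compute the weights of the tautological generators. The inertial $\Gm$ acts on the universal line bundle $\L_d$ by scalar multiplication, hence with weight $1$, and trivially on the relative dualizing sheaf $\omega_{\pi}$; thus $\omega_{\pi}^n\otimes \L_d^m$ has weight $m$. Pushing forward, $\Gm$ acts on the determinant of cohomology $d_{\pi}(\omega_{\pi}^n\otimes \L_d^m)$ with weight equal to $m\cdot \chi(\omega_{\pi}^n\otimes \L_d^m)=m\bigl(n(2g-2)+md-g+1\bigr)$ by Riemann--Roch. In particular $\operatorname{wt}(\Lambda(1,0))=0$, $\operatorname{wt}(\Lambda(0,1))=d-g+1$ and $\operatorname{wt}(\Lambda(1,1))=d+g-1$. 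Finally, the boundary line bundles have weight $0$: the relations $\nu_d^*\O(\ov\delta_i)=\O(\w\delta_i)$ (and their $\ov\delta_i^1,\ov\delta_i^2$ variants) of Corollary~\ref{C:bound-rig} exhibit them as pull-backs along $\nu_d$, which lie in $\ker(\operatorname{wt})$.

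It then remains to identify the weight-zero subgroup by linear algebra. A general element $\Lambda(1,0)^a\otimes\Lambda(0,1)^b\otimes\Lambda(1,1)^c$, tensored with an arbitrary boundary contribution in the compactified case, has weight $b(d-g+1)+c(d+g-1)$, which vanishes precisely when $(b,c)$ is an integer multiple of the primitive vector $\bigl(\tfrac{d+g-1}{(d+g-1,d-g+1)},-\tfrac{d-g+1}{(d+g-1,d-g+1)}\bigr)$; the line bundle attached to this vector is exactly $\Xi$ of~\eqref{E:lb-Xi}. Hence $\ker(\operatorname{wt})$ is generated by $\Lambda(1,0)$ and $\Xi$, together with the boundary line bundles in the compactified case. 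Freeness is automatic, since a subgroup of a free abelian group is free, and the listed elements form a basis because $\Xi$ is primitive in the rank-one kernel of $(b,c)\mapsto b(d-g+1)+c(d+g-1)$ while $\Lambda(1,0)$ and the boundary classes span complementary free directions. Transporting this basis back through the isomorphism $\nu_d^*$ yields both parts of Theorem~\ref{T:MainThmB}.

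I expect the main obstacle to be the careful bookkeeping of the $\Gm$-weight through the determinant of cohomology --- that is, establishing $\operatorname{wt}(d_{\pi}(\F))=\operatorname{wt}(\F)\cdot\chi(\F)$ and checking that the exact sequence above genuinely encodes descent along the gerbe (so that $\ker(\operatorname{wt})$ is exactly the image of $\nu_d^*$, not merely contained in it). Once the weight homomorphism is under control, the remaining identification of $\Xi$ as the primitive weight-zero combination is routine.
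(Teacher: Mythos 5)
Your reduction of Theorem~\ref{T:MainThmB} to Theorem~\ref{T:MainThmA} is internally coherent: the weight homomorphism you describe is exactly the map $\res$ of the exact sequence \eqref{succ-Pic} coming from the Leray spectral sequence for $\nu_d$, your weight computations $\operatorname{wt}(\Lambda(1,0))=0$, $\operatorname{wt}(\Lambda(0,1))=d-g+1$, $\operatorname{wt}(\Lambda(1,1))=d+g-1$ agree with Lemma~\ref{exis-linebun}, the identification of $\ker(\res)$ with $\Im(\nu_d^*)$ is precisely the exactness of \eqref{succ-Pic} at $\Pic(\pdt)$, and the linear algebra extracting the primitive weight-zero vector giving $\Xi$ is the content of Corollary~\ref{C:res-taut}. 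The problem is that this argument is circular relative to the paper: the paper's proof of Theorem~\ref{T:MainThmA} is itself a deduction \emph{from} Theorem~\ref{T:MainThmB}, via $\Pict(\pd)=\Pic(\pd)$ (equation \eqref{E:equa-taut}) and Corollary~\ref{C:comp-Pic}. You have essentially run the paper's deduction of A from B in reverse, and in doing so you have nowhere established the substantive claim underlying both theorems, namely that every line bundle on these stacks is tautological.

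That missing step is the heart of the paper's proof of Theorem~\ref{T:MainThmB}\eqref{T:MainThmB1}, and it is carried out on the rigidification $\pd$, not on the gerbe $\pdt$: one uses the fibration $\Phi_d:\pd\to\mg$ and the homomorphism $\chi_d:\Pic(\pd)\to\Z$ recording the multiple of the theta class $\theta_C$ cut out on the N\'eron--Severi group of a fiber $J^d(C)$. Kouvidakis's results (Theorem~\ref{T:Kouvi}) give $\ker\chi_d=\Im\Phi_d^*$ and $\Im\chi_d\subseteq \frac{2g-2}{(2g-2,d+g-1)}\cdot\Z$, and the nontrivial computation $\chi_d(\Xi)=\frac{2g-2}{(2g-2,d+g-1)}$ (Theorem~\ref{T:chi-taut}, proved by an explicit analysis of Poincar\'e bundles on $C\times J^d(C)$) shows the tautological subgroup already achieves this bound; combined with $\Pic(\mg)=\Z\cdot\Lambda$ (Harer, Arbarello--Cornalba, where $g\geq 3$ enters) and $\Phi_d^*\Lambda=\Lambda(1,0)$, this forces $\Pict(\pd)=\Pic(\pd)$ and gives freeness of rank two. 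Unless you supply an independent proof of Theorem~\ref{T:MainThmA} (and none is given in the paper), your argument cannot be accepted as a proof of Theorem~\ref{T:MainThmB}; some version of the $\chi_d$ argument, or another mechanism bounding $\Pic$ from above by $\Pict$, must be added.
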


Let us now sketch the strategy that we use to prove Theorems \ref{T:MainThmA} and \ref{T:MainThmB}.
Since the stack $\pdtb$ is smooth we have a natural exact sequence
\begin{equation}\label{E:exaseq-Pic}
\bigoplus_{\stackrel{\kdg \: \nmid \:\: (2i-1)}{\text{ or } i=g/2 \text{ or } i=0}}\langle \O(\w{\delta}_i)\rangle
\bigoplus_{\stackrel{\kdg \mid (2i-1)}{\text{ and } i\neq 0,  g/2}}\langle \O(\w{\delta}_i^1),\O(\w{\delta}_i^2)\rangle\to
\Pic(\pdtb)\to \Pic(\pdt)\to 0.
\end{equation}
In Theorem \ref{T:ex-seq}, we prove that the above exact sequence is also exact on the left, or in other words
that the boundary line bundles are linearly independent in the Picard group
of $\pdtb$. In order to prove this, we use the same strategy used by Arbarello-Cornalba in \cite{arbcorn} to prove the
analogous statement for the boundary line bundles of $\mgb$:  we construct some test curves $\w{F}_j\to \pdtb$,
in number equal to the number of boundary line bundles,
%from smooth and irreducible curves $\w{F}_j$
and prove that the intersection matrix between these test curves $\w{F}_j$ and the boundary line bundles of $\pdtb$
is non-degenerate. This reduces the proof of Theorem \ref{T:MainThmA}\eqref{T:MainThmA2} to the proof of Theorem
\ref{T:MainThmA}\eqref{T:MainThmA1}.
%Indeed, the test curves $\w{F}_j\to \pdtb$ that we construct are liftings of the test curves $F_j\to \mgb$
%constructed by Arbarello-Cornalba  in \cite{arbcorn}.

Moreover, using the fact that the pull-back morphism $\nu_d^*:\Pic(\pdb)\to \Pic(\pdtb)$ is injective and it sends the
boundary line bundles of $\pdb$ into the boundary line bundles of $\pdtb$, we get that also the boundary line bundles of $\pdb$ are linearly independent (see Corollary \ref{C:ex-seq-rig}), or in other words that we have an exact sequence:
 \begin{equation}\label{E:exaseq-Pic-rig}
0\to\bigoplus_{\stackrel{\kdg \: \nmid\:\: (2i-1)}{\text{ or } i=g/2 \text{ or } i=0}}\langle \O(\ov{\delta}_i)\rangle
\bigoplus_{\stackrel{\kdg \mid (2i-1)}{\text{ and } i\neq 0, g/2}}\langle \O(\ov{\delta}_i^1),\O(\ov{\delta}_i^2)\rangle\to
\Pic(\pdb)\to \Pic(\pd)\to 0.
\end{equation}
 This reduces the proof of Theorem \ref{T:MainThmB}\eqref{T:MainThmB2} to the proof of Theorem \ref{T:MainThmB}\eqref{T:MainThmB1}.

The Picard groups of $\pdt$ and of $\pd$ are related via the following exact sequence coming from the
Leray spectral sequence for the \'etale sheaf $\Gm$ with respect to the rigidification map $\nu_d:\pdt\to \pd$
(see \eqref{succ-Pic}):
$$0\to \Pic(\pd)\stackrel{\nu_d^*}{\longrightarrow} \Pic(\pdt) \stackrel{\res}{\longrightarrow}
\Pic B\Gm=\Hom(\Gm, \Gm)\cong \Z \stackrel{\obs}{\longrightarrow}  \Br(\pd).$$
The map ${\rm res}$ is the restriction to the fibers of $\nu_d$ (which are isomorphic to the classifying stack $B\Gm$
 of the multiplicative group $\Gm$) and ${\rm obs}$ sends $1\in \Z$ into the class $[\nu_d]$ of the $\Gm$-gerbe
 $\nu_d:\pdt\to \pd$ in the cohomological Brauer group $\Br(\pd):=H^2_{{\rm \acute et}}(\pd, \Gm)$ of $\pd$.
 In Theorem \ref{order-gerbe}, we prove that the order of $[\nu_d]$ is
the greatest common divisor $(d+1-g, 2g-2)$. In proving this, we interpret in Proposition \ref{Poinc-order}
the order of $[\nu_d]$ as the smallest natural number $m$ for which there exists an \textit{$m$-Poincar\'e line bundle} (in the sense of Definition \ref{m-Poincare}) on the universal family $\univ$ over $\pd$.
Using Proposition \ref{Poinc-order}, Theorem \ref{order-gerbe} follows then from a result of Kouvidakis
(see \cite[p. 514]{kou2}).
Note also that by combining Theorem \ref{order-gerbe} and Proposition \ref{Poinc-order}, we recover the well-known
result of Mestrano-Ramanan (\cite[Cor. 2.9]{MR}): there exists a Poincar\'e line bundle on $\univ$ if and only if
$(d+1-g,2g-2)=1$. We conjecture that the cohomological Brauer group $\Br(\pd)$ is generated by $[\nu_d]$
(see Conjecture \ref{C:prob-Brauer} and the discussion following it).

From the computation of the order of $[\nu_d]$ and  the  above exact sequence, we get that ${\rm res}(\Pic(\pdt))=(2g-2,d+1-g)\cdot \Z$.
Moreover, we compute the values of the map ${\rm res}$ on the generators of the tautological subgroup $\Pict(\pdt)\subseteq \Pic(\pdt)$ in Lemma \ref{exis-linebun} and deduce that
${\rm res}(\Pict(\pdt))=(2g-2,d+1-g)\cdot \Z$.  This easily reduces the proof of Theorem \ref{T:MainThmA}\eqref{T:MainThmA1}
to the proof of Theorem \ref{T:MainThmB}\eqref{T:MainThmB1}. Furthermore, it shows that $\Pict(\pd)$ is generated by $\Lambda(1,0)$ and the line bundle $\Xi$ of \eqref{E:lb-Xi}.

The Picard group of $\pd$ can be determined with the help of the following exact sequence
\begin{equation}\label{E:rigid-NS}
0\to  \Pic(\mg)\stackrel{\Phi_d^*}{\longrightarrow}\Pic(\pd)  \stackrel{\chi_d}{\longrightarrow} \Z,
\end{equation}
where the map $\chi_d$ sends a line bundle $\L\in \Pic(\pd)$ to the integer $m\in \Z$ such that the class of the restriction of $\L$ to the fiber
$\Phi_d^{-1}(C)=J^d(C)$ in the N\'eron-Severi group $NS(J^d(C))$ is isomorphic to $m$ times the class $\theta_C$ of the theta divisor (see Section \ref{S:Pic-rigid}
for more details). A well-known result of Harer and Arbarello-Cornalba says that $\Pic(\mg)$ is freely generated by the Hodge line bundle $\Lambda$ if $g\geq 3$ (see
Theorem \ref{pic-mg}) and we prove in Lemma \ref{L:comp-taut} that $\Phi_d^*(\Lambda)=\Lambda(1,0)$. On the other hand, a result of Kouvidakis in \cite{kouvidakis}
implies that $\Im(\chi_d)\subseteq \displaystyle \frac{2g-2}{(2g-2,d+1-g)}\cdot \Z$. In Theorem \ref{T:chi-taut}, we compute the values of $\chi_d$ on the generators of
the tautological subgroup $\Pict(\pd)\subseteq \Pic(\pd)$ and we deduce that $\displaystyle \chi_d(\Pict(\pd))= \frac{2g-2}{(2g-2,d+1-g)}\cdot \Z$.
From the exact sequence \eqref{E:rigid-NS}, we deduce now that $\Pict(\pd)=\Pic(\pd)$ is free of rank two; Theorem \ref{T:MainThmB}\eqref{T:MainThmB1} now follows.

\vspace{0,3cm}

In the last Section of the paper, we relate the Picard group of the moduli stack $\pdb$
with  the divisor class group $\Cl(\ov{J}_{d,g})$ of its moduli scheme
$\ov{J}_{d,g}$, which was computed by Fontanari \cite{fontanari} based upon the work of Kouvidakis \cite{kouvidakis} on the Picard group
of the open subscheme $J_{d,g}^0\subset J_{d,g}$ consisting of pairs $(C,L)$ such that  $C$ does not have non-trivial automorphisms.
Fontanari proved in \cite{fontanari} that the boundary of $\ov{J}_{d,g}$ is the union of the irreducible divisors  $\w{\Delta}_i:=\phi_d^{-1}(\Delta_i)$ for $i=1,\ldots,[g/2]$,
where $\phi_d:\ov{J}_{d,g}\to \ov{M}_g$ is the natural map towards the moduli scheme of stable curves of genus $g$ and $\Delta_i\subseteq \ov{M}_g$ is, as usual, the irreducible divisor
of $\ov{M}_g$ whose generic point is an irreducible curve with one node if $i=0$ or, for $i>0$,  the union of two irreducible components of genera $i$ and $g-i$ meeting in one point.
Moreover, Fontanari proved that there is an exact sequence
\begin{equation}\label{E:seq-Weildiv}
0 \to \bigoplus_{i=0}^{[g/2]} \Z\cdot \w{\Delta_i}\to \Cl(\ov{J}_{d,g})\to \Cl(J_{d,g})\to  0,
\end{equation}
where the last map is the restriction map and the first map sends each $\w{\Delta}_i$ into its class in $\Cl(\ov{J}_{d,g})$.
The Picard group of $\pdb$ and the divisor class group of $\ov{J}_{d,g}$ are related by the pull-back via the natural map $\Psi_d:\pdb\to \ov{J}_{d,g}$,
which induces a map from the exact sequence \eqref{E:exaseq-Pic-rig} into the exact sequence \eqref{E:seq-Weildiv}.
In Section \ref{S:rel-modspace} we prove the following result.

\begin{theoremalpha}\label{T:MainThmC}
The pull-back map $\Psi_d^*:\Cl(\ov{J}_{d,g})\to \Pic(\pdb)$ induced by the natural map $\Psi_d:\pdb\to \ov{J}_{d,g}$ fits into a commutative diagram with exact rows
$$\xymatrix{
0\ar[r] & \bigoplus_{i=0}^{[g/2]} \Z\cdot \w{\Delta_i} \ar[r] \ar[d]^{\alpha_d}&  \Cl(\ov{J}_{d,g})\ar[r] \ar^{\Psi_d^*}[d]&  \Cl(J_{d,g})\ar[r] \ar[d]^{\beta_d}&  0\\
0 \ar[r] & \bigoplus_{\stackrel{\kdg  \nmid\: \:2i-1}{\text{ or } i=g/2}}\langle \O(\ov{\delta}_i)\rangle
\bigoplus_{\stackrel{\kdg \mid 2i-1}{\text{ and } i\neq g/2}}\langle \O(\ov{\delta}_i^1),\O(\ov{\delta}_i^2)\rangle\ar[r]&
\Pic(\pdb)\ar[r] & \Pic(\pd)\ar[r] & 0,
}$$
such that:
\begin{enumerate}[(i)]
\item  \label{T:MainThmC1} the map $\beta_d$ is an isomorphism;
\item \label{T:MainThmC2} the map $\alpha_d$ satisfies
$$\alpha_d(\w{\Delta_i})=\begin{cases}
\O(\ov{\delta}_i) & \text{ if } \kdg\nmid (2i-1), \\
\O(\ov{\delta}_i^1)+\O(\ov{\delta}_i^2) & \text{ if } \kdg \mid (2i-1) \text{ and } i\neq g/2,\\
\O(2\ov{\delta}_i) & \text{ if } \kdg \mid (2i-1) \text{ and }Êi= g/2.\\
%\w{\delta}_i & \text{ if } i=0 \text{ or } i>1 \text{ and } \kdg \nmid 2i-1, \\
%2\w{\delta}_i & \text{ if } i=1 \text{ and } \kdg \nmid 1, \\
%\w{\delta}_i^1+\w{\delta}_i^2 & \text{ if } i> 1 \text{ and } \kdg \: \mid \: 2i-1, \\
%2\w{\delta}_i^1+2\w{\delta}_i^2 & \text{ if } i=1 \text{Êand }Ê\kdg \: \mid \: 1. \\
\end{cases}$$
\end{enumerate}
\end{theoremalpha}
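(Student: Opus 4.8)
The plan is to construct the pull-back homomorphism $\Psi_d^*$, show that it carries Fontanari's sequence \eqref{E:seq-Weildiv} into the sequence \eqref{E:exaseq-Pic-rig}, and then identify the induced maps $\alpha_d$ and $\beta_d$. Since $\pdb$ is a smooth stack, every Weil divisor on it is Cartier and $\Pic(\pdb)=\Cl(\pdb)$; hence $\Psi_d^*$ is well defined on $\Cl(\ov J_{d,g})$ by pulling back the reflexive sheaf attached to a Weil divisor (equivalently, pulling back on the smooth locus and extending, using that $\ov J_{d,g}$ is normal). The restriction of $\Psi_d$ to the interior is the natural map $\pd\to J_{d,g}$, so functoriality of pull-back makes the right-hand square commute and induces $\beta_d\colon \Cl(J_{d,g})\to \Pic(\pd)$. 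Once one checks that each $\Psi_d^*(\w\Delta_i)$ is supported on the boundary of $\pdb$, it automatically lands in the boundary subgroup, which produces $\alpha_d$ and forces the left-hand square to commute. This reduces the theorem to the two assertions \eqref{T:MainThmC1} and \eqref{T:MainThmC2}.

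For \eqref{T:MainThmC1}, I would argue by a codimension-two estimate. Let $U\subseteq J_{d,g}$ be the open locus of pairs $(C,L)$ whose automorphism group is trivial modulo the scalars $\Gm$, i.e.\ such that no nontrivial $\sigma\in\Aut(C)$ satisfies $\sigma^*L\cong L$. Over $U$ the coarse-space map $\Psi_d$ restricts to an isomorphism $\Psi_d^{-1}(U)\xrightarrow{\sim}U$, since a point of $\pd$ with trivial stabilizer maps isomorphically to its image. The key point is that $J_{d,g}\setminus U$ and $\pd\setminus\Psi_d^{-1}(U)$ both have codimension $\geq 2$ for $g\geq 3$: the locus of curves with automorphisms has codimension $g-2$ in $\mg$ (realized by the hyperelliptic stratum), and over it one must additionally impose $\sigma^*L\cong L$; as a nontrivial $\sigma$ acts nontrivially on $\operatorname{Pic}^d(C)$, this condition cuts the Jacobian fibre down to a proper closed subset, so the total codimension is at least $2$ (in the delicate case $g=3$ one uses that the fibrewise fixed locus of the hyperelliptic $\iota^*$ is finite). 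Since $\pd$ is smooth and $\ov J_{d,g}$ is normal, removing these loci changes neither $\Pic$ nor $\Cl$, whence
\begin{equation*}
\Cl(J_{d,g})\cong\Cl(U)\cong\Cl(\Psi_d^{-1}(U))=\Pic(\Psi_d^{-1}(U))\cong\Pic(\pd),
\end{equation*}
and this composite is exactly $\beta_d$; thus $\beta_d$ is an isomorphism.

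For \eqref{T:MainThmC2}, the task is to compute the multiplicities in $\Psi_d^*(\w\Delta_i)$. Because $\phi_d\circ\Psi_d$ agrees with the coarse map $\mgb\to\ov M_g$ precomposed with $\Phi_d$, we have $\Psi_d^{-1}(\w\Delta_i)_{\mathrm{red}}=\Phi_d^{-1}(\delta_i)$, which by Theorem \ref{bound-pdb} equals $\ov\delta_i$ when it is irreducible and $\ov\delta_i^1\cup\ov\delta_i^2$ when $\kdg\mid(2i-1)$ and $i\neq g/2$. It then remains to read off the ramification index of $\Psi_d$ along each component. A local analysis at a generic point, where the curve is $C_1\cup_p C_2$ with a single smoothing parameter and (away from $i=g/2$) no symmetry fixing the balanced bundle, shows that the smoothing coordinate pulls back with multiplicity one, giving $\O(\ov\delta_i)$ in the non-split case and $\O(\ov\delta_i^1)+\O(\ov\delta_i^2)$ in the split case. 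The exceptional behaviour is at $i=g/2$ with $\kdg\mid(2i-1)$: the two bidegree types that would separate $\w\Phi_d^{-1}(\delta_{g/2})$ into $\ov\delta_{g/2}^1$ and $\ov\delta_{g/2}^2$ are interchanged by the symmetry swapping the two genus-$g/2$ components, so they fold into the single divisor $\ov\delta_{g/2}$; this folding is generically two-to-one and accounts for the multiplicity $2$, yielding $\O(2\ov\delta_{g/2})$.

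The main obstacle is precisely this last multiplicity computation at $i=g/2$: one must pin down the local structure of the coarse-space map $\Psi_d$ along $\ov\delta_{g/2}$ and verify that the component-swap symmetry contributes a ramification index of exactly $2$, rather than being absorbed into the boundary identifications elsewhere. Making the codimension-two estimate of \eqref{T:MainThmC1} uniform across all $g\geq 3$ (in particular over the hyperelliptic stratum in genus three) is a secondary technical point, but it is the $g/2$ ramification index that requires the most care and, I expect, an explicit local model of the universal quasistable family near that boundary stratum.
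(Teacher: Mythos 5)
Your overall skeleton (construct $\Psi_d^*$, check it respects the two exact sequences, then prove (i) and (ii) separately) matches the paper's. For part \eqref{T:MainThmC1}, however, you take a genuinely different route. The paper does not argue by codimension: it compares the two short exact sequences $0\to\Pic((M_g)_{\rm reg})\to\Pic((J_{d,g})_{\rm reg})\to\RPic((J_{d,g})_{\rm reg})\to 0$ and $0\to\Pic(\mg)\to\Pic(\pd)\to\RPic(\pd)\to 0$, quotes the known isomorphism $\Cl(M_g)\cong\Pic(\mg)$, and uses that the group of rationally determined line bundles of a fibration is a birational invariant of the base (together with representability of $\pd\to\mg$) to see that the outer vertical maps are isomorphisms; the five lemma then gives (i). Your argument instead shows $\Psi_d$ is an isomorphism away from codimension two, refining $J_{d,g}^0$ to the larger locus $U$ where $\Aut(C,L)/\Gm$ is trivial precisely so that the estimate survives the hyperelliptic divisor in genus $3$ (the paper's Remark \ref{Kou-mistake} shows the naive version with $J_{d,g}^0$ genuinely fails for $g=3$). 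Your route is valid --- the key inputs are that a nontrivial automorphism of a curve of genus $\geq 2$ acts faithfully on $H^0(\omega_C)$, hence nontrivially on $\Pic^0(C)$, and that coarse/good moduli maps are isomorphisms over the trivial-stabilizer locus --- and it is arguably more self-contained, at the price of the genus-$3$ case analysis; the paper's route avoids all codimension estimates but imports the birational invariance of $\RPic$.

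For part \eqref{T:MainThmC2} you follow the same strategy as the paper, and you correctly identify both the answers and the one step that carries all the weight: the local model of $\Psi_d$ at a general point of $\w{\Delta}_i$. That step is not optional bookkeeping --- it is the entire content of (ii) --- and you assert its output rather than derive it. The paper fills it by citing the local structure results of \cite[Proof of Thm. 1.5]{BFV}: when $\kdg\nmid(2i-1)$ the map $\Psi_d$ is a local isomorphism (multiplicity one), and when $\kdg\mid(2i-1)$ it is, after discarding trivial coordinates, $[\Spf k[[x,y]]\widehat{\otimes}A/\Gm]\to\Spf k[[xy]]\widehat{\otimes}A$ with $\Gm$ acting by $(\lambda x,\lambda^{-1}y)$, so that $\w{\Delta}_i=(xy=0)$ pulls back to $(x=0)+(y=0)$. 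In particular the coefficient $2$ at $i=g/2$ is not a ramification index of a two-to-one map in the usual sense: the pullback is still the reduced divisor $(x=0)+(y=0)$, and the factor $2$ appears only because those two analytic branches lie on the \emph{same} irreducible divisor $\ov{\delta}_{g/2}$ (which is irreducible globally but has two local branches at such a point). Your ``folding'' picture captures the right phenomenon, but to complete the proof you would need exactly the explicit local model you anticipate, which is precisely what the paper imports from \cite{BFV}.
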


\vspace{0,3cm}

It is likely that the same techniques used in this paper could lead to the computation of
the Picard group of the degree-$d$ compactified universal Jacobian stack $\ov{{\mathcal Jac}}_{d,g,n}$ over the stack
$\ov{\mathcal{M}}_{g,n}$ of $n$-pointed stable curves of genus $g$ constructed in \cite{melo2}
and of the universal vector bundle over $\mgb$ constructed in \cite{Pan}.
We plan to come back to these problems in a near future.

\vspace{0,2cm}

The paper is organized as follows.
In Section \ref{S:prelimi}, we summarize the known properties of the stacks
$\pdtb$ and $\pdb$ as well as the properties of their moduli scheme
 $\ov{J}_{d,g}$ (see \ref{desc-stacks}).
Moreover, we recall some basic facts about the Picard group
of a stack and how to construct natural line bundles on moduli stacks by using the determinant of
cohomology and the Deligne pairing (see \ref{Pic-stack}). Finally, we recall the computation of the Picard group of the
stack $\mgb$ of stable curves of genus $g$ by Harer and Arbarello-Cornalba (see \ref{sec-pic-mg}).
In Section \ref{S:bound-div}, we describe the boundary divisors of $\pdtb$ and we explain how they are related to
the pull-back of the boundary divisors of $\mgb$. In Section \ref{S:indip-bound}, we show that the line bundles on $\pdtb$
associated to the boundary divisors are linearly independent. In Section \ref{S:taut-lb}, we introduce the tautological
line bundles on $\pdtb$ and we study the relations among them. In Section \ref{S:comp-Pic}, we compare the Picard groups
of $\pdt$ and of $\pd$ using the Leray's spectral sequence associated to the rigidification map
$\nu_d:\pdt\to \pd$. Moreover, we compute the order of the $\Gm$-gerbe $\nu_d$ in the Brauer group of $\pd$.
In Section \ref{S:Pic-rigid}, we compute the Picard group of $\pd$ using the fibration $\Phi_d:\pd\to \mg$. Moreover, 
we investigate the relation between the line bundle $\Xi$ and the universal theta divisor (see \ref{S:rela-theta}) and we prove that the pull-back via the Abel-Jacobi map  
provides an isomorphism between the Picard groups of $\pdt$ and of the $d$-th symmetric product of the universal curve $\M_{g,1}\to \M_g$, when $d>2g-2$ (see \ref{S:univ-sym}).
In Section \ref{S:rel-modspace}, we compare the Picard group of $\pdb$ with the divisor class group of its moduli
scheme $\ov{J}_{d,g}$.

\subsection{Relation to algebraic topology}\label{S:comp-top}
After a preliminary version of this manuscript has been posted on arXiv, J. Ebert and O. Randal-Williams posted
on arXiv a preliminary version of the paper \cite{ERW}, which contains, among other things, some results that are closely related to Theorem
\ref{T:MainThmA}\eqref{T:MainThmA1} and Theorem \ref{T:MainThmB}\eqref{T:MainThmB1} in the case when our base field $k$ is the field of complex numbers. We now explain the relation between our results  and the results of \cite{ERW}.

In \cite{ERW}, the authors introduce two holomorphic stacks $\Hol_g^d$ and $\Pic_g^k$, defined as follows
(see \cite[Sec. 4.1]{ERW} for details):
$\Hol_g^d$ is the holomorphic stack whose fibers over a topological space $B$ consists of families of Riemann surfaces
$\pi:E\to B$ of genus $g$ equipped with a fiberwise holomorphic line bundle $L\to E$ of relative degree $d$; $\Pic_g^d$ is the holomorphic stack parametrizing families of Riemann surfaces of genus
$g$ equipped with a section of the associated bundle of Jacobian varieties of degree $d$. There is a morphism
$\phi_g^d: \Hol_g^d \to \Pic_g^d$ defined by sending a fiberwise holomorphic line bundle to its isomorphism class.
It turns out that $\phi_g^d$ is a gerbe with band $\CC^*$ (see \cite[Thm. 4.5]{ERW}).

The relation with our algebraic stacks $\pdt$ and $\pd$ (over the complex numbers) is provided by a commutative diagram
\begin{equation}\label{E:comp-stacks}
\xymatrix{
(\pdt)^{\rm an} \ar[r]\ar[d]^{\nu_d} & \Hol_g^d \ar[d]^{\phi_g^d} \\
(\pd)^{\rm an} \ar[r] & \Pic_g^d
}
\end{equation}
where $(\pdt)^{\rm an}$ and $(\pd)^{\rm an}$ are the analytifications of the complex algebraic stacks $\pdt$ and $\pd$.
The horizontal maps are most likely isomorphisms although we have not checked this in detail.

The authors of loc. cit. consider tautological classes $\kappa_{i,j}\in H^{2i+2j}(\Hol_g^d,\Z)$ for $i\geq -1$ and $j\geq 0$
defined by associating to every element $(\pi:E\to B, L\to E)\in \Hol_g^d(B)$ the  cohomology class
\begin{equation}\label{E:k-classes}
\kappa_{i,j}(\pi:E\to B, L\to E):= \pi_{!}(c_1(T^vE)^{i+1}\cdot c_1(L)^j)\in H^{2i+2j}(B, \Z),
\end{equation}
where $T^vE$ is the relative tangent line bundle of the family $\pi:E\to B$ of Riemann surfaces, which is of course dual
to the sheaf $\omega_{\pi}$ of relative differentials of $\pi$. In particular, the classes $\kappa_{i,0}$ are the pull-back to
$\Hol_g^d$ of the Mumford-Morita-Miller classes $\kappa_i$ on $\mg$.
Moreover, one denotes by $\lambda$ the pull-back to $\Hol_g^d$ of the Hodge class on $\mg$.

Among other beautiful results, Ebert and Randal-Williams compute the analytic N\'eron-Severi group $NS$, the topological
Picard group $\Pic_{\rm top}$ and the second cohomology group with integer values $H^2(-, \Z)$ of the above two stacks
(see \cite[Thm. C, Thm. E]{ERW}), under the assumption that $g\geq 6$.

\begin{thm}[Ebert, Randal-Williams]\label{T:ERW-thm}
Assume that $g\geq 6$. Then
\begin{enumerate}[(i)]
\item \label{T:ERW-thm1} $NS(\Hol_g^d)=\Pic_{\rm top}(\Hol_g^d)=H^2(\Hol_g^d,\Z)$ is freely generated by $\lambda$, $\kappa_{-1,2}$, and
    $\displaystyle \zeta:=\frac{\kappa_{0,1}-\kappa_{-1,2}}{2}$.
\item \label{T:ERW-thm2} $NS(\Pic_g^d)=\Pic_{\rm top}(\Hol_g^d)=H^2(\Pic_g^d,\Z)$ is the subgroup of $H^2(\Hol_g^d,\Z)$ generated by $\lambda$  and
$$\eta:=\frac{d \,\kappa_{0,1}+(g-1)\kappa_{-1,2}}{(2g-2, g+d-1)}.$$
\end{enumerate}
\end{thm}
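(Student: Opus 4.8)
The plan is to treat Theorem~\ref{T:ERW-thm} as a statement in the homotopy theory of surface bundles and to work throughout over $\CC$, viewing $\Hol_g^d$ and $\Pic_g^d$ as topological (homotopy) stacks. First I would record the fibration structure: forgetting the line bundle gives maps $\Hol_g^d\to\mg$ and $\Pic_g^d\to\mg$ whose homotopy fibre over a Riemann surface $\Sigma$ is, respectively, the groupoid of degree-$d$ line bundles on $\Sigma$ (a $\CC^*$-gerbe over the Jacobian torus $\Pic^d(\Sigma)\cong (S^1)^{2g}$) and that torus itself. The classes $\kappa_{i,j}$ of \eqref{E:k-classes} are fibre integrals, and in cohomological degree $2$ only the three with $i+j=1$ survive, namely $\kappa_{1,0}$, $\kappa_{0,1}$ and $\kappa_{-1,2}$; Mumford's relation $\kappa_{1,0}=\kappa_1=12\lambda$ on $\mg$ identifies the first with $12\lambda$, so that $\lambda,\kappa_{0,1},\kappa_{-1,2}$ span $H^2(\Hol_g^d,\Q)$. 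Finally, the gerbe $\phi_g^d\colon\Hol_g^d\to\Pic_g^d$ will be used to deduce (ii) from (i).

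The core input is the stable computation. I would present $\Hol_g^d$ (after stabilising the genus) as a moduli space of surfaces carrying a $\theta$-structure encoding a degree-$d$ map to $\mathbb{CP}^\infty=B\CC^*$, and apply the Madsen--Weiss theorem in the Galatius--Madsen--Tillmann--Weiss form to identify its stable homology with that of an explicit infinite loop space. From this one reads off that the stable $H^2$ is \emph{freely} generated, and that an integral basis is $\lambda$, $\kappa_{-1,2}$ and $\zeta=(\kappa_{0,1}-\kappa_{-1,2})/2$ rather than the $\kappa$'s themselves. The divisibility by $2$ is transparent on the algebraic side: under the comparison \eqref{E:comp-stacks} a Grothendieck--Riemann--Roch computation (as in Theorem~\ref{T:taut-rel}) shows that $c_1$ of the determinant-of-cohomology bundle $\Lambda(0,1)=d_\pi(\L_d)$ equals $\lambda+\kappa_{-1,2}+\zeta$, so that $\zeta$ occurs with coefficient $1$; since $\Lambda(0,1)$ is a genuine line bundle, $\zeta$ is integral. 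This also gives an independent check that the rank is $3$, matching Theorem~\ref{T:MainThmA}\eqref{T:MainThmA1}.

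Next I would descend from the stable range to the stated bound $g\geq 6$ by homological stability for these enriched moduli spaces (surfaces with the above $\theta$-structure): the relevant Harer-type stability range guarantees that $H^2(\Hol_g^d,\Z)$ and $H^2(\Pic_g^d,\Z)$ equal their stable values once $g\geq 6$, which is exactly where the number $6$ enters in degree $2$. The identification $NS=\Pic_{\rm top}=H^2$ then follows from the vanishing of the relevant odd cohomology together with the fact that every integral degree-$2$ class on these stacks is of Hodge type $(1,1)$, hence algebraic and represented by an honest line bundle.

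Finally, part (ii) would be deduced from part (i) through the Leray--Serre spectral sequence of the $\CC^*$-gerbe $B\CC^*\to\Hol_g^d\xrightarrow{\phi_g^d}\Pic_g^d$. In it the edge map $H^2(\Pic_g^d)\to H^2(\Hol_g^d)$ is injective with image the kernel of restriction to the fibre $B\CC^*=K(\Z,2)$, and a direct fibre integration gives $\lambda\mapsto 0$, $\kappa_{0,1}\mapsto -(2g-2)c$ and $\kappa_{-1,2}\mapsto 2d\,c$, where $c$ generates $H^2(B\CC^*)$. Taking the \emph{primitive} integral generator of this kernel inside the lattice $\langle\lambda,\zeta,\kappa_{-1,2}\rangle$ produces $\lambda$ together with $\eta=(d\,\kappa_{0,1}+(g-1)\kappa_{-1,2})/(2g-2,g+d-1)$, the denominator being forced by the identity $(2d,g+d-1)=(2g-2,g+d-1)$; this is consistent with the image of the fibre restriction being $(d+1-g,2g-2)\cdot\Z$, i.e. with the order of the gerbe $\nu_d$ computed in Theorem~\ref{order-gerbe}. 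I expect the main obstacle to be precisely the two integrality points — that $\zeta$ is a genuine integral generator and that $\eta$ is primitive — since both require the fine integral form of the stable cohomology (Galatius's integral refinement of Madsen--Weiss) rather than its rational shadow, the rational statement alone pinning down only finite-index subgroups.
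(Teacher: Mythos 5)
This theorem is not proved in the paper at all: it is quoted verbatim from Ebert--Randal-Williams (\cite[Thm.~B, Thm.~C]{ERW}) and used as a black box, so there is no in-paper proof to compare your argument against. What you have written is, in effect, a reconstruction of the strategy of \cite{ERW} itself. The parts of your sketch that can be checked against this paper are correct: the fibre-restriction computation over $B\CC^*$ (namely $\lambda\mapsto 0$, $\kappa_{0,1}\mapsto -(2g-2)c$, $\kappa_{-1,2}\mapsto 2d\,c$) matches Lemma \ref{exis-linebun} via the change of basis of Theorem \ref{T:taut-rel}, your identification of the primitive kernel generator as $\eta$ using $(2d,\,d+g-1)=(2g-2,\,d+g-1)$ is exactly right and consistent with Theorem \ref{order-gerbe}, and the integrality of $\zeta$ via $c_1(\Lambda(0,1))=\lambda+\kappa_{-1,2}+\zeta$ is the same observation the paper makes in the proof of Corollary \ref{C:Pic-H2}. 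The Leray--Serre argument deducing (ii) from (i) is also sound, since $H^1(B\CC^*)=0$ forces the edge map $H^2(\Pic_g^d)\to H^2(\Hol_g^d)$ to be injective with image the kernel of restriction to the fibre.

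The genuine gap is that the two load-bearing inputs are only named, not supplied: (a) the \emph{integral} stable computation of $H^2(\Hol_g^d)$ (that it is free of rank $3$ with the stated basis, which requires the integral refinement of Madsen--Weiss/GMTW for surfaces with the relevant tangential structure, not just its rational form), and (b) homological stability for these decorated moduli spaces in a range putting degree $2$ in the stable range once $g\geq 6$. Without (a) and (b) your argument only pins down $H^2\otimes\Q$ and a finite-index sublattice, which is not the statement. Your claim that $NS=H^2$ ``because every integral degree-$2$ class is of type $(1,1)$'' is also unsubstantiated as written; the clean route (and the one implicit in Corollary \ref{C:Pic-H2}) is to observe that all three generators $\lambda$, $\kappa_{-1,2}$, $\zeta$ are first Chern classes of explicit holomorphic line bundles ($\Lambda(1,0)$, $K_{-1,2}$, $\Lambda(1,1)^{-1}$), so $NS$ already contains a generating set of $H^2$. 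As an outline of the external proof your proposal is faithful; as a self-contained proof it is incomplete at exactly the points where \cite{ERW} does its real work.
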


The diagram \eqref{E:comp-stacks} gives two natural homomorphisms
\begin{equation}\label{E:comp-Pic}
\begin{aligned}
& c_1: \Pic(\pdt)\to H^2(\Hol_g^d,\Z),\\
& c_1: \Pic(\pd)\to H^2(\Pic_g^d,\Z).\\
\end{aligned}
\end{equation}

The next result is obtained by comparing Theorems \ref{T:MainThmA}\eqref{T:MainThmA1} and  \ref{T:MainThmB}\eqref{T:MainThmB1} with Theorem
\ref{T:ERW-thm}.

\begin{cor}\label{C:Pic-H2}
Assume that $g\geq 6$. The homomorphisms of \eqref{E:comp-Pic} are isomorphisms.
\end{cor}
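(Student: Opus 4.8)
The plan is to show that each of the two homomorphisms in \eqref{E:comp-Pic} carries the explicit free basis of its source, provided by Theorems \ref{T:MainThmA}\eqref{T:MainThmA1} and \ref{T:MainThmB}\eqref{T:MainThmB1}, to the explicit free basis of its target, provided by Theorem \ref{T:ERW-thm}, by means of a transition matrix of determinant $\pm 1$. The map $c_1$ is, by construction, the homomorphism induced by \eqref{E:comp-stacks} which sends each determinant-of-cohomology and Deligne-pairing line bundle on $\pdt$ to the first Chern class of the identically-defined universal construction on $\Hol_g^d$; this is well defined precisely because $\Pic(\pdt)$ is generated by tautological line bundles. Remembering that $T^vE$ is dual to $\omega_\pi$, so that $c_1(T^vE)=-c_1(\omega_\pi)$, one has the dictionary $\pi_{!}(c_1(\omega_\pi)^2)=\kappa_{1,0}$, $\pi_{!}(c_1(\omega_\pi)c_1(\L_d))=-\kappa_{0,1}$, $\pi_{!}(c_1(\L_d)^2)=\kappa_{-1,2}$, while Lemma \ref{L:comp-taut} identifies $c_1(\Lambda(1,0))$ with the Hodge class $\lambda$.

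First I would compute the images of the three generators $\Lambda(1,0),\Lambda(0,1),\Lambda(1,1)$ of $\Pic(\pdt)$. Applying Grothendieck--Riemann--Roch to $\pi$ exactly as in Theorem \ref{T:taut-rel}, and using Mumford's relation $\kappa_{1,0}=12\lambda$ (valid on $\mg$, where the boundary correction vanishes), yields
$$c_1(\Lambda(n,m))=6(n^2-n)\lambda-\tfrac12(2nm-m)\kappa_{0,1}+\tfrac12 m^2\kappa_{-1,2}+\lambda.$$
Substituting $\zeta=\tfrac12(\kappa_{0,1}-\kappa_{-1,2})$ from Theorem \ref{T:ERW-thm}\eqref{T:ERW-thm1}, this reads, in the basis $\{\lambda,\zeta,\kappa_{-1,2}\}$,
$$c_1(\Lambda(1,0))=\lambda,\qquad c_1(\Lambda(0,1))=\lambda+\zeta+\kappa_{-1,2},\qquad c_1(\Lambda(1,1))=\lambda-\zeta.$$
The transition matrix $\left(\begin{smallmatrix}1&1&1\\0&1&-1\\0&1&0\end{smallmatrix}\right)$ has determinant $1$, so the first map of \eqref{E:comp-Pic} sends a free basis to a free basis and is an isomorphism.

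For the second map I would feed these values into the definition \eqref{E:lb-Xi} of $\Xi$. Setting $e:=(d+g-1,d-g+1)$ and using the elementary identity $e=(2g-2,d+g-1)$ (since $(d+g-1)-(d-g+1)=2g-2$), so that the denominator of $\eta$ in Theorem \ref{T:ERW-thm}\eqref{T:ERW-thm2} coincides with $e$, a direct computation using $\kappa_{0,1}=2\zeta+\kappa_{-1,2}$ gives
$$c_1(\Xi)=\frac{2g-2}{e}\,\lambda+\eta,$$
where the integer $\tfrac{2g-2}{e}$ appears because $e\mid 2g-2$. Hence, in the rank-two free group $H^2(\Pic_g^d,\Z)$ with basis $\{\lambda,\eta\}$, the second map sends $\Lambda(1,0)\mapsto\lambda$ and $\Xi\mapsto\tfrac{2g-2}{e}\lambda+\eta$; the transition matrix $\left(\begin{smallmatrix}1&\frac{2g-2}{e}\\0&1\end{smallmatrix}\right)$ again has determinant $1$, so this map too is an isomorphism. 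Both assertions are valid for $g\ge 6$, the common range of Theorems \ref{T:MainThmA}, \ref{T:MainThmB} and \ref{T:ERW-thm}.

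I expect the only genuine difficulty to be the \emph{bookkeeping of normalizations} rather than anything conceptual: one must keep straight the sign relating $c_1(\omega_\pi)$ and $c_1(T^vE)$, the factor $\tfrac12$ produced by Grothendieck--Riemann--Roch for the Deligne pairing as opposed to the bare class $\kappa_{-1,2}$, and the Mumford relation $\kappa_{1,0}=12\lambda$; one must also confirm that $\{\lambda,\eta\}$ is an honest basis of the subgroup of Theorem \ref{T:ERW-thm}\eqref{T:ERW-thm2}, which holds because $\lambda$ and $\eta=\tfrac{2d}{e}\zeta+\tfrac{d+g-1}{e}\kappa_{-1,2}$ are visibly $\Q$-linearly independent and generate that subgroup. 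Once this dictionary is pinned down, the corollary reduces to the two unimodularity checks above.
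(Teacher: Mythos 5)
Your proposal is correct and follows essentially the same route as the paper: both express $c_1(\Lambda(1,0))$, $c_1(\Lambda(0,1))$, $c_1(\Lambda(1,1))$ and $c_1(\Xi)$ in terms of $\lambda$, $\zeta$, $\kappa_{-1,2}$ and $\eta$ (the paper via Theorem \ref{T:taut-rel} and Lemma \ref{L:comp-taut}, you by rerunning the same Grothendieck--Riemann--Roch computation on $\mg$) and conclude by the unimodularity of the resulting change of basis, which you make explicit where the paper leaves it implicit. Your value $c_1(\Lambda(1,1))=\lambda-\zeta$ (rather than the paper's $-\zeta$) is the one actually forced by Theorem \ref{T:taut-rel}\eqref{T:taut-rel2}--\eqref{T:taut-rel3}, and the discrepancy only shifts the $\lambda$-coefficient of $c_1(\Xi)$, leaving both determinant checks, and hence the corollary, unaffected.
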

\begin{proof}
The fact that the first map in \eqref{E:comp-Pic} is an isomorphism follows by comparing Theorem \ref{T:MainThmA}\eqref{T:MainThmA1} and Theorem \ref{T:ERW-thm}\eqref{T:ERW-thm1} by mean of the formulas
\begin{equation*}\tag{*}
\begin{sis}
& c_1(\Lambda(1,0))=\lambda,\\
& c_1(\Lambda(1,1))=\frac{\kappa_{-1,2}-\kappa_{0,1}}{2}=-\zeta,\\
& c_1(\Lambda(0,1))=\frac{\kappa_{-1,2}+\kappa_{0,1}}{2}+\lambda=\zeta+\kappa_{-1,2}+\lambda,
\end{sis}
\end{equation*}
where the first formula follows from Lemma \ref{L:comp-taut} and the last two formulas follow from Theorem \ref{T:taut-rel}
together with the facts that $c_1(K_{-1,2})=\kappa_{-1,2}$ and $c_1(K(0,1))=-\kappa_{0,1}$. Note that the minus sign appearing in this last equality is  due to the fact that in defining the classes $\kappa_{i,j}\in
H^2(\Hol_g^d,\Z)$ (see \eqref{E:k-classes}), Ebert and Randal-Williams use the relative tangent sheaf while our definition \eqref{E:taut-lb} of the tautological line bundles $K_{i,j}\in \Pic(\pdt)$  uses its dual sheaf, namely the sheaf of relative differentials.

The fact that the second map in \eqref{E:comp-Pic} is an isomorphism follows by comparing Theorem \ref{T:MainThmB}\eqref{T:MainThmB1} and Theorem \ref{T:ERW-thm}\eqref{T:ERW-thm2} using the formula
$$c_1(\Xi)=\frac{(d+g-1)c_1(\Lambda(0,1))-(d-g+1)c_1(\Lambda(1,1))}{(d+g-1,d-g+1)}
=\eta+\frac{d+g-1}{(d+g-1,d-g+1)}\lambda.$$

\end{proof}

%\vspace{0,2cm}

\emph{Acknowledgements.}

The first author would like to thank Lucia Caporaso for suggesting this problem to her while a PhD student of hers.
We thank Alexis Kouvidakis for pointing out to us the reference \cite[p. 514]{kou2}
which  simplified the proof of Theorem \ref{order-gerbe}. We are grateful to  Oscar Randal-Williams for some enlightening discussions on the paper \cite{ERW} and for pointing out to us a mistake in a previous version of Lemma \ref{L:Xi-theta}. We thank F. Poma, M. Talpo and F. Tonini for pointing out that the proof of Theorem \ref{T:taut-rel} gives a priori only relations in the rational Picard group.
We are grateful to the referee for suggesting an interesting connection between the Picard groups of the universal Jacobian and of the universal symmetric product, which is now included in subsection \ref{S:univ-sym}.
We have benefited from useful conversations with Gilberto Bini, Silvia Brannetti and Claudio Fontanari.

%Throughout this paper, we will use the following

\subsection*{Notations}

\begin{notation}
We fix two integers $g\geq 2$ and $d$: $g$ will always denote the genus of the curves
and $d$ the degree of the Jacobian varieties.
Given two integers $m$ and $n$, we set $(n,m)$ for the greatest common divisor of $n$ and $m$.
In particular the greatest common divisor
$$(2g-2,d+1-g)=(2g-2,d-1+g)=(d+1-g,d-1+g)$$
will appear often in what follows. Similarly the number
\begin{equation}\label{def-k}
\kdg:=\frac{2g-2}{(2g-2,d+g-1)}
\end{equation}
will appear repeatedly throughout the paper and hence it deserves a special notation.
\end{notation}

\begin{notation}
 We work over an algebraically closed field $k$ of characteristic $0$.
All the schemes and stacks we will deal with are of finite type over $k$.

There are two places in our work where the assumption on the characteristic of $k$ is used.
The first one is the explicit computation of the Picard group of $\mgb$ by Harer and Arbarello-Cornalba (see Theorem \ref{pic-mg} for the precise statement), which is known to be true only in characteristic zero (in positive characteristic, the same statement remains true for the {\em rational} Picard group of $\mgb$ by the work of Moriwaki in \cite{Mor}). The second one is a result of Kouvidakis \cite{kouvidakis} (see Theorem \ref{T:Kouvi}), whose proof over the complex numbers does not immediately extend to a base field $k$ of positive characteristics\footnote{We thank F. Poma, M. Talpo and F. Tonini for pointing out this to us.}.

%The only place where the assumption on the characteristic of $k$ is used is the fact that we use the explicit determination of the Picard group of $\mgb$ by Harer and Arbarello-Cornalba (see Theorem \ref{pic-mg} for the precise statement), which is known to be true only in characteristic zero. However, in positive characteristic, the same statement remains true for the {\em rational} Picard group of $\mgb$ by the work of Moriwaki in \cite{Mor}. Therefore, all our statements hold in positive characteristic for the rational Picard groups.
\end{notation}

\begin{notation}
 We will often assume, for simplicity,  that $g\geq 3$. This is the case for two of the main results of this paper, namely Theorems \ref{T:MainThmA} and \ref{T:MainThmB}.

The reason for this assumption  is that the Picard group of $\mgb$ is freely generated by the Hodge line bundle
$\Lambda$ and the boundary line bundles $\{\O(\delta_0),\ldots,\O(\delta_{[g/2]})\}$ if $g\geq 3$ (see Theorem \ref{pic-mg})
while if $g=2$ then $\Pic(\mgb)$ is still generated by $\Lambda$ and the boundary line bundles but with the relation $\Lambda^{10}\otimes \O(-\delta_0-2\delta_1)=0$ (see \ref{sec-pic-mg}).
Indeed, all the above mentioned results continue to hold for $g=2$ if we add the relation pull-backed from the relation
$\Lambda^{10}\otimes \O(-\delta_0-2\delta_1)=0$ in $\Pic(\ov{\mathcal M}_2)$ or its image $\Lambda^{10}=0$ in
$\Pic({\mathcal M}_2)$.
\end{notation}

\section{Preliminaries}\label{S:prelimi}

\begin{nota}{\emph{The stacks $\pdtb$ and $\pdb$ and their moduli space $\ov J_{d,g}$}}\label{desc-stacks}

Let $\pdt$ be the universal Jacobian stack over the moduli stack $\mg$ of smooth
curves of genus $g$. The fiber of $\pdt$ over a scheme $S$ is the groupoid whose objects are
families of smooth curves $\C\to S$ endowed with a line bundle $\L$ over $\C$ of relative degree $d$ over $S$
and whose arrows are the obvious isomorphisms.
$\pdt$ is a smooth irreducible (Artin)
algebraic stack of dimension $4g-4$ endowed with a natural forgetful morphism $\w\Phi_d:\pdt\to \mg$.

%The multiplicative group $\Gm$ naturally injects into the automorphism group of every object $(\C\to S, \L)\in \pdt(S)$
%as multiplication by scalars on $\L$ (this makes $\pdt$ into a $\Gm$-stack in the sense of \cite[Def. 3.1]{Hof1} or, equivalently,
%it endows $\pdt$ with a $\Gm$-2-structure in the sense of \cite[Appendix C.1]{AGV}).

The multiplicative group $\Gm$ naturally injects into the automorphism group of every object $(\C\to S, \L)\in \pdt(S)$
as multiplication by scalars on $\L$, endowing $\pdt$ with the structure of a $\Gm$-stack in the sense of \cite[Def. 3.1]{Hof1} or, equivalently,
with a $\Gm$-2-structure in the sense of \cite[Appendix C.1]{AGV}.

There is a canonical procedure to remove such automorphisms, called $\Gm$-{\it rigidification} (see \cite[Sec. 5]{ACV}, \cite[Sec. 5]{Rom}
and \cite[Appendix C]{AGV}).
The outcome is a new stack
$\pd:=\pdt \fatslash \Gm$  together with a smooth and surjective map
$\nu_d:\pdt\to \pd$. Indeed, the map $\nu_d$ makes $\pdt$ into a gerbe banded by $\Gm$ (or a $\Gm$-gerbe in short)
over $\pd$ (we refer to \cite{Gir} for the theory of gerbes).
The forgetful map
$\w\Phi_d$ factors via $\nu_d$ and we get a commutative diagram
$$\xymatrix{
\pdt \ar[dr]_{\w\Phi_d}\ar[rr]^{\nu_d} & &\pd \ar[dl]^{\Phi_d}\\
& \mg &\\
}
$$
The new stack $\pd$ is a smooth, irreducible and separated Deligne-Mumford stack of dimension $4g-3$ and the map
$\Phi_d$ is representable.

%Modular compactifications of the stacks $\pdt$ and $\pd$ have been described by Melo in \cite{melo},
%based upon previous results of Caporaso (see \cite{cap} and \cite{capneron}).
%Let us review these compactifications.

A modular compactification of the stacks $\pdt$ and $\pd$ was described by Caporaso in \cite{capneron} for some degrees and later by Melo in \cite{melo} for the general case,
based upon previous work of Caporaso in \cite{cap}.
Let us review this compactification.

%In particular, in loc. cit., the author describes $\pdt$ and $\pd$ (note that $\pdt$ is denoted by $\mathcal{G}_{d,g}$ in loc.
%cit.) as quotient stacks and find
%a modular compactification of them (see \cite[Thm. 3.1, Prop. 4.1]{melo}).

%For later use, we need to recall the results in \cite{melo}.
%To describe it, we need to recall the following definitions.

\begin{defi}\label{quasi-stable}\cite[Sec. 3.3]{cap}
A connected, projective nodal curve $X$ is said to be \emph{quasistable} if it is (Deligne-Mumford) semistable and
if the exceptional components of $X$ do not meet.
%one of the following equivalent conditions is satisfied:
%\begin{enumerate}[(i)]
%\item $X$ is Deligne-Mumford semistable curve such that the exceptional components of $X$ do not meet;
%\item $X$ is obtained from its stabilization $X^{{\rm st}}$ by blowing up some of its nodes;
%\item Every connected subcurve $Z$ of $X$ having arithmetic genus zero and meeting the complementary subcurve $Z^c:=\ov{X\setminus Z}$
%in less than three points is isomorphic to $\P^1$ and meets the complementary subcurve in two points.
%\end{enumerate}
%The exceptional locus of $X$, denoted by $X_{\rm exc}$, is the union of the exceptional components of $X$.
\end{defi}

\begin{defi} \label{balanced}\cite[Def. 3.5]{BMV}
Let $X$ be a quasistable curve of genus $g\ge 2$. A line bundle $L$ of degree $d$ on $X$ (or its multidegree) is said to be 
%\begin{enumerate}
 \emph{properly balanced} if
 \begin{itemize}
 \item
 for every subcurve $Z$ of $X$ the following (``Basic Inequality'') holds
\begin{equation}\label{basic}
m_Z(d):=\frac{d w_Z}{2g-2}-\frac {k_Z}2\le \deg_ZL\le\frac{d w_Z}{2g-2}+\frac{k_Z}2:=M_Z(d),
\end{equation}
where $w_Z:=\deg_Z(\omega_X)$ and $k_Z:=\sharp(Z\cap \ov{X\setminus Z})$.
\item  $\deg_EL=1$ for every exceptional component $E$ of $X$.
\end{itemize}
%The set of properly balanced line bundles of degree $d$ of a curve $X$ is denoted by $B_X^d$.
%\item We say that $L$ (or its multidegree) is \emph{strictly balanced} if it is properly balanced and if for each proper subcurve
%$Z$ of $X$ such that $\deg_ZL=m_Z(d)$, the intersection $Z\cap Z^c$ is contained in the exceptional locus $X_{\rm exc}$ of $X$.
%\end{enumerate}
\end{defi}

\begin{remark}\label{rmk-ineq}
%It is easy to check that:
%\begin{enumerate}[(i)]
%\item The basic inequality (\ref{basic}) for $Z$ is equivalent to the one for the complementary subcurve $Z^c:=\ov{X\setminus Z}$;
%\item If $Z$ is a disjoint union of the subcurves $Z_1$ and $Z_2$, then the basic inequality (\ref{basic}) for $Z_1$ and $Z_2$ implies the one for $Z$.
%\end{enumerate}
%In particular, it is enough to check the basic inequality (\ref{basic}) for all subcurves $Z$ such that $Z$ and $Z^c$ are connected.
In order to check that a line bundle is properly balanced, it is enough  to check the basic inequality (\ref{basic}) for all subcurves $Z$ such that $Z$ and $Z^c$ are connected (see \cite[Rmk. 3.8]{BMV}).
\end{remark}

%\begin{defi}\label{d-gen}
%A stable curve $X$ is said to be \emph{$d$-general} if and only if every properly balanced line bundle on $X$ is strictly balanced. We denote by $\mgbd\subset \mgb$ the open substack whose sections are families of $d$-general curves.
%\end{defi}

%For later use, we need to recall the description of the locus $\mgb\setminus \mgbd$ of $d$-special curves, given in \cite[Prop. 2.2]{melo}. Recall that a \emph{vine curve} of genus $g$ and type $(g_1,g_2)$ is a stable curve of genus $g$ formed by two smooth curves of genus $g_1$ and $g_2$ meeting at $k:=g-g_1-g_2+1$ points.

%\begin{prop}[Melo]\label{d-special}
%\noindent
%\begin{enumerate}[(i)]
%\item A stable curve $C$ is $d$-special (i.e. it belongs to  $\mgb\setminus \mgbd$) if and only if it is a specialization of a $d$-special vine curve.
%\item A stable vine curve of genus $g$ and type $(i,g-i-k+1)$ is $d$-special  if and only if
%\begin{equation*}
%k_{d,g}:=\frac{2g-2}{(2g-2,d-g+1)} \mid (2i-2+k).
%\end{equation*}
%\end{enumerate}
%\end{prop}

Let $\pdtb$ be the category fibered in groupoids whose fiber over a scheme $S$ consists of the groupoid whose objects are families of quasistable curves $\C\to S$ endowed with a line bundle $\L$ of relative degree $d$,
whose restriction to each geometric fiber is properly balanced (we say that $\L$ is properly balanced), and whose arrows are the obvious isomorphisms.
The multiplicative group $\Gm$ injects into the automorphism group of every object $(\C\to S, \L)\in \pdtb(S)$
as multiplication by scalars on $\L$.
%The rigidification $\nu_d:\pdtb\to \pdb:=\pdtb\fatslash \Gm$ is a $\Gm$-gerbe.
As in the smooth case, the rigidification morphism $\nu_d:\pdtb\to \pdb:=\pdtb\fatslash \Gm$ endows $\pdtb$ with the structure of a $\Gm$-gerbe over $\pdb$.
%$\pdtb\fatslash \Gm$.

There is a natural morphism of category fibered in groupoids $\w{\Phi}_d:\pdtb \to \mgb$ obtained by sending
 $(\C\to S, \L)\in \pdtb(S)$ into the stabilization $\C^{\rm st}\to S\in \mgb(S)$ of the family of quasi-stable
 curves $\C\to S$. Clearly, the morphism $\w{\Phi}_d$ factors through a morphism $\Phi_d:\pdb\to \mgb$.

The following theorem summarizes the known properties of $\pdtb$ and of $\pdb$, proved in \cite{capneron}
under the assumption that $(d+g-1,2g-2)=1$ and in \cite{melo} for arbitrary $d$, and of their moduli space $\ov J_{d,g}$ constructed in \cite{cap}.
%(note that in \cite{melo} the stacks $\pdt$ and $\pdtb$ are denoted, respectively, by ${\mathcal G}_{d,g}$ and
%$\ov{\mathcal G}_{d,g}$).

\begin{thm}[Caporaso, Melo]\label{T:st-sp}
\noindent
\begin{enumerate}
\item $\pdtb$ (resp. $\pdb$) is an irreducible and \emph{smooth quotient stack} of finite type over $k$ and of dimension $4g-4$ (resp. $4g-3$).
It contains the stack $\pdt$ (resp. $\pd$) as a dense open substack. 
\item The  morphism $\w{\Phi}_d:\pdtb\to \mgb$ (resp. $\Phi_d:\pdb\to \mgb$) is surjective and universally closed.
%Moreover we have the following commutative diagram:
%Moreover, the following diagram commutes :
%$$\xymatrix{
%\pdt \ar[rr] \ar@{^{(}->}[d] \ar@{}[drr]|{\square} & &\pd \ar@{^{(}->}[d]\\
%\pdtb \ar[dr]_{\w{\Phi}_d}\ar[rr]^{\nu_d} & &\pdb \ar[dl]^{\Phi_d}\\
%& \mgb &\\
%}$$
%\item The following conditions are equivalent:
%\begin{enumerate}[(i)]
%\item $(d+1-g, 2g-2)=1$;
%\item $\Phi_d$ is separated;
%\item $\Phi_d$ is (strongly) representable;
%\item $\pdb$ is separated;
%\item $\pdb$ is a Deligne-Mumford stack.
%\end{enumerate}
\item There exists a projective irreducible normal variety $\ov{J}_{d,g}$, endowed with a surjective morphism $\phi_d:\ov J_{d,g}\to \ov M_g$, which is an adequate  moduli space  in the sense of \cite{alper2} (and even a good moduli space in the sense of \cite{alper} if ${\rm char}(k)=0$) for $\pdtb$ and $\pdb$. 
%and a good moduli space (in the sense of \cite{alper}) for $\pdtb$ and $\pdb$ if ${\rm char}(k)=0$.
% Indeed, $\ov{J}_{d,g}$ is a coarse moduli space for $\pdb$ if and only if $(d+1-g, 2g-2)=1$.
%\item More generally, for all $d\in\mathbb Z$, $(i)$-$(v)$ above hold for the restriction
%$$\Phi_d:\pdb^{\rm d-gen}:=\pdb\times_{\mgb} \mgbd\to \mgbd.$$
\end{enumerate}
\end{thm}
Indeed, if (and only if) $(d+1-g, 2g-2)=1$ then $\pdb$ is a Deligne-Mumford stack,  the morphism $\Phi_d$ is proper and $\ov J_{d,g}$ is a coarse moduli space for $\pdb$.

For later use, we record the morphisms introduced in this subsection into the following commutative diagram:
\begin{equation}\label{big-dia}
\xymatrix{
\pdtb \ar^{\w{\Phi}_d}[d]\ar^{\nu_d}[r] &\pdb \ar[dl]^{\Phi_d} \ar[r]^{\Psi_d}& \ov{J}_{d,g}\ar_{\phi_d}[d] \\
\mgb \ar[rr] & & \ov{M}_g\\
}
\end{equation}

\end{nota}

\begin{nota}{\emph{The Picard and the Chow groups of a stack}}\label{Pic-stack}

In this subsection, we are going to briefly recall
%collect
the definition and the main properties of the Picard group and of the Chow group of an algebraic stack
that we are going to use later. We refer to \cite{Edi} for a nice survey on the subject.

Let $\X$ be an Artin stack of finite type over $k$. The definition of the (functorial) Picard group
of $\X$ was introduced by Mumford (see \cite[p. 64]{Mum}).

\begin{defi}[Mumford]\label{D:Pic-group}
A line bundle $L$ on $\X$ is the data consisting of a line bundle $L(f)\in \Pic(S)$ for every morphism
$f: S \to \X$ from a scheme $S$ and, for every composition of morphisms $T\stackrel{g}{\to} S \stackrel{f}{\to} \X$,  an isomorphism $L(f\circ g)\cong g^* L(f)$, with the obvious compatibility requirements.

The tensor product of two line bundles $L$ and $M$ on $\X$ is the new line bundle $L\otimes M$ on $\X$
defined by $(L\otimes M)(f):=L(f)\otimes M(f)$ together with the
isomorphisms $(L\otimes M)(f\circ g)\cong g^* (L\otimes M)(f)$ induced by those of $L$ and $M$.

The abelian group consisting of all the line bundles on $\X$ together with the operation of tensor product
is called the Picard group of $\X$ and is denoted by $\Pic(\X)$.
\end{defi}

If $\X$ is isomorphic to a quotient stack $[X/G]$, where $X$ is a scheme of finite type over $k$ and $G$ is a group scheme of finite type over $k$,
then $\Pic(\X)$ is isomorphic to the group $\Pic^G(X)$ of $G$-linearized line bundles on $X$ in the sense of \cite[I.3]{GIT} (see e.g. \cite[Prop. 18]{EGa}).

The (operational) Chow groups of an Artin stack $\X$ were introduced by Edidin-Graham in \cite[Sec. 5.3]{EGa}
(see also \cite[Def. 3.5]{Edi}), generalizing the definition of the operational (or bivariant) Chow groups of a scheme
(see \cite[Chap. 17]{Ful}).

\begin{defi}[Edidin-Graham]\label{D:Chow-group}
An $i$-th Chow cohomology class $c$ on $\X$ is the data consisting of an element $c(f)$ belonging to the $i$-th operational Chow group $A^i(S)$ for every morphism
$f: S \to \X$ from a scheme $S$ and, for every composition of morphisms $T\stackrel{g}{\to} S \stackrel{f}{\to} \X$,  an isomorphism $c(f\circ g)\cong g^* c(f)$, with the obvious compatibility requirements.

The sum of two $i$-th Chow cohomology classes $c$ and $d$ on $\X$ is the new $i$-th Chow cohomology class
$c\oplus d$ on $\X$
defined by $(c\oplus d)(f):=c(f)\oplus d(f)$ together with the
isomorphisms $(c\oplus d)(f\circ g)\cong g^* (c\oplus d)(f)$ induced by those of $c$ and $d$.

The abelian group consisting of all the $i$-th Chow cohomology classes on $\X$ together with the operation of
sum is called the $i$-th Chow group of $\X$ and is denoted by $A^i(\X)$.
\end{defi}

If $\X$ is isomorphic to a quotient stack $[X/G]$, where $X$ is a scheme of finite type over $k$ and $G$ is a group scheme of finite type over $k$,
then $A^i(\X)$ is isomorphic to the $i$-th (operational) equivariant Chow group $A^i_G(X)$ defined by Edidin-Graham in
\cite[Sec. 2.6]{EGa} (see \cite[Prop. 19]{EGa}).

The first Chern class gives an homomorphism
\begin{equation}\label{E:1Chern}
\begin{aligned}
c_1 :\Pic(\X) & \longrightarrow A^1(\X) \\
L & \mapsto c_1(L)
\end{aligned}
\end{equation}
where $c_1(L)\in A^1(\X)$ is defined by setting $c_1(L)(f):=c_1(L(f))$ for every morphism $f:S\to \X$ from a
scheme $S$.

In the sequel, we will use the following results concerning the Picard group of a smooth quotient stack.

\begin{fact}[Edidin-Graham]\label{Fact-Pic}
Let $\X$ be a \emph{smooth quotient stack}, i.e. $\X=[X/G]$ where $X$ is a smooth variety and $G$ is an algebraic group acting on $X$.
\begin{enumerate}[(i)]
\item \label{Fact-Pic1} The first Chern class map $c_1:\Pic(\X)\to A^1(\X)$ is an isomorphism.\\
In particular, every Weil divisor $\D$ on $\X$ is a Cartier divisor and hence it gives rise
to a line bundle $\O_{\X}(\D)$ on $\X$.
%We will often abuse the notation
%and denote also with $\D$ the class of the divisor $\D$ in the Picard group $\Pic(\X)$.
\item \label{Fact-Pic2} Given a Weil divisor $\D$ of $\X$ with irreducible components $\D_i$, there is an exact sequence
$$\bigoplus_i \Z\cdot \langle  \O_{\X}(\D_i) \rangle \to \Pic(\X)\to \Pic(\X\setminus \D)\to 0. $$
\item \label{Fact-Pic3} If $\Y$ is a closed substack of $\X$ of codimension greater than $1$ then there is an isomorphism
$$\Pic(\X)\stackrel{\cong}{\to} \Pic(\X\setminus \Y).$$
\end{enumerate}
\end{fact}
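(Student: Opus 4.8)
The plan is to deduce all three statements from the finite-dimensional approximation of the quotient stack $[X/G]$ due to Edidin--Graham and Totaro, which reduces everything to classical facts about the divisor class group of a smooth variety. Recall from \ref{Pic-stack} that $\Pic(\X)\cong\Pic^G(X)$ and $A^i(\X)\cong A^i_G(X)$. For the equivariant groups one chooses, for the codimension-one phenomena at issue here, a representation $V$ of $G$ together with a $G$-invariant open subset $U\subseteq V$ on which $G$ acts freely and with $\codim_V(V\setminus U)\geq 2$. Since $G$ then acts freely on $X\times U$, the quotient $Y:=(X\times U)/G$ exists as an algebraic space, and it is \emph{smooth} because $X$ and $U$ are smooth and $Y\to X\times U$ is a principal $G$-bundle. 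The defining property of the approximation is $A^1_G(X)\cong A^1(Y)$, and, since the removed locus has codimension $\geq 2$, the chain $\Pic^G(X)\cong\Pic^G(X\times U)\cong\Pic(Y)$ (homotopy invariance, codimension-$\geq 2$ excision, and descent along the free action); moreover these isomorphisms identify $c_1$ on $\X$ with $c_1$ on $Y$. Thus all three statements are reduced to the corresponding statements for the smooth algebraic space $Y$.

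For part \eqref{Fact-Pic1}, on a smooth algebraic space $Y$ of dimension $n$ the operational Chow groups satisfy Poincar\'e duality $A^i(Y)\cong A_{n-i}(Y)$ (Fulton, Chap. 17, extended to algebraic spaces), so in particular $A^1(Y)\cong A_{n-1}(Y)=\Cl(Y)$ is the group of codimension-one cycles modulo rational equivalence, i.e. the Weil divisor class group. On a smooth space every Weil divisor is Cartier, so $\Cl(Y)=\Pic(Y)$, and under these identifications $c_1\colon\Pic(Y)\to A^1(Y)$ is the identity. Transporting back through the approximation gives that $c_1\colon\Pic(\X)\to A^1(\X)$ is an isomorphism; the ``in particular'' clause is then just the statement that Weil divisors on $\X$ are Cartier.

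For parts \eqref{Fact-Pic2} and \eqref{Fact-Pic3}, the key observation is that the approximation is compatible with removing closed substacks: if $\Y\subseteq\X$ is closed of codimension $c$ then its image $\Y_U\subseteq Y$ is closed of the same codimension $c$, and $\X\setminus\Y$ is approximated by $Y\setminus\Y_U$. One then invokes the elementary localization sequence for the class group of a smooth variety, $\bigoplus_i\Z\cdot[Z_i]\to\Cl(Y)\to\Cl(Y\setminus Z)\to 0$ for a divisor $Z=\bigcup_i Z_i$, which proves \eqref{Fact-Pic2} after identifying $\Cl$ with $\Pic$ via part \eqref{Fact-Pic1}; and the fact that $\Cl(Y)\cong\Cl(Y\setminus Z)$ whenever $Z$ has codimension $\geq 2$ (Weil divisor classes are insensitive to removing a subset of codimension $\geq 2$), which proves \eqref{Fact-Pic3}.

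The main obstacle is not any single computation but the bookkeeping of the approximation: one must check that a single choice of $(V,U)$ works simultaneously for the codimension-one phenomena in all three parts, that the isomorphisms $\Pic(\X)\cong\Pic(Y)$ and $A^1(\X)\cong A^1(Y)$ are independent of this choice and natural with respect to the open immersions $\X\setminus\Y\hookrightarrow\X$, and that the standard results for schemes (Poincar\'e duality and the class-group localization sequence) genuinely extend to the smooth algebraic space $Y$. All of this is carried out in \cite{EGa}; we have only sketched the reduction.
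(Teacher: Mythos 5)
Your argument is correct and is essentially the proof given in the cited reference: the paper itself does not prove this Fact but simply attributes part \eqref{Fact-Pic1} to \cite[Cor. 1]{EGa}, part \eqref{Fact-Pic2} to \cite[Prop. 5]{EGa} and part \eqref{Fact-Pic3} to \cite[Lemma 2(a)]{EGa}, and those results are established there exactly by the finite-dimensional approximation $(X\times U)/G$ that you describe, followed by the classical statements about divisor class groups of smooth varieties. The only caveats are the ones you already flag (existence of the quotient as a smooth algebraic space, independence of the choice of $(V,U)$ in degree one, and naturality under open immersions), all of which are handled in \cite{EGa}.
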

\begin{proof}
Part \eqref{Fact-Pic1} follows from \cite[Cor. 1]{EGa}. Part \eqref{Fact-Pic2} follows from \cite[Prop. 5]{EGa}.
Part \eqref{Fact-Pic3} follows from \cite[Lemma 2(a)]{EGa}.
\end{proof}

By Theorems \ref{T:st-sp}, all the properties stated in Fact \ref{Fact-Pic}
hold for the stacks we will deal with, namely $\pdt$, $\pd$, $\pdtb$ and $\pdb$. Moreover, it is well-known
that the same properties hold true for $\mgb$ and $\mg$.

%For a proof of the fact that $\Pic_{\rm fun}(\pdb)=\Pic^{PGL(r+1)}(H_d)$, see \cite{Mum2}.

\vspace{0.1cm}

There are two standard methods to produce line bundles on a stack parametrizing nodal curves with some extra-structure
(as $\pdtb$), namely  the determinant of cohomology (introduced in \cite{KM})
and the Deligne pairing (introduced in \cite{Del}). Let us review briefly the definition and main properties of these two constructions, following the presentation given in
\cite[Chap. 13, Sec. 4 and 5]{ACG}.

Let $\pi:X\to S$ be a family of nodal curves, i.e. a proper and flat morphism whose geometric fibers are nodal curves.
Given a coherent sheaf $\F$ on $X$ flat over $S$ (e.g. a line bundle on $X$), the {\em determinant of cohomology}
of $\F$ is a line bundle $d_{\pi}(\F)\in \Pic(S)$ defined as it follows: we choose a complex of locally free sheaves $f: K^0\to K^1$ on $S$ such that
$\ker f=\pi_*(\F)$ and $\coker f=R^1\pi_*(\F)$ (this is always possible) and we set 
$$d_{\pi}(\F):=\det K^0 \otimes (\det K^1)^{-1}.$$
% In the special case where $\pi_*(\F)$ and $R^1\pi_*(\F)$ are locally free sheaves on $S$, one sets
%$$d_{\pi}(\F):=\det \pi_*(\F)\otimes (\det R^1\pi_*(\F))^{-1}.$$
%In the general case, one can always find a complex of locally free sheaves $f: K^0\to K^1$ on $S$ such that $\ker f=\pi_*(\F)$ and $\coker f=R^1\pi_*(\F)$ and then one sets
%$$d_{\pi}(\F):=\det K^0 \otimes (\det K^1)^{-1}.$$
The determinant of cohomology is functorial, multiplicative for short exact sequence and its first Chern class is equal to
\begin{equation}\label{E:Chern-det}
c_1(d_{\pi}(\F))=c_1(\pi_{!}(\F)):=c_1(\pi_*(\F))-c_1(R^1\pi_*(\F)).
\end{equation}
For more details, the reader is referred to  \cite[Chap. 13, Sec. 4]{ACG}.

%\begin{fact}\label{F:prop-det}
%Let $\pi:X\to S$ be a family of nodal curves and let $\F$ be a coherent sheaf on $X$ flat over $S$.
%\begin{enumerate}[(i)]
%\item For every exact sequence of coherent sheaves on $X$ flat over $S$
%$$0 \to \E \to \F \to \G \to 0, $$
%there is a canonical isomorphism
%$$d_{\pi}(F)\cong d_{\pi}(\E)\otimes d_{\pi}(\F).$$
%\item If $\F$ is locally free then there is a canonical isomorphism
%$$d_{\pi}(\omega_{\pi}\otimes \F^{\vee})\cong d_{\pi}(\F),$$
%where $\omega_{\pi}$ is the relative dualizing sheaf of the family $\pi$.
%\item The first Chern class of $d_{\pi}(\F)$ is equal to
%$$c_1(d_{\pi}(\F))=c_1(\pi_{!}(\F)):=c_1(\pi_*(\F))-c_1(R^1\pi_*(\F)).$$
%\item The formation of the determinant of cohomology is functorial in the following sense: given a Cartesian diagram
%$$\xymatrix{
%X' \ar[r]^g \ar[d]_{\pi'} \ar@{}[dr]|{\square} & X \ar[d]^{\pi} \\
%S' \ar[r]^f & S
%}
%$$
%we have a canonical isomorphism
%$$f^* d_{\pi}(\F)\cong d_{\pi'}(g^* \F).$$
%\end{enumerate}
%\end{fact}

Given two line bundles $\M$ and $\L$ on the total space of a family of nodal curves $\pi: X\to S$,
the {\em Deligne pairing} of $\M$ and $\L$ is a line bundle $\langle \M, \L\rangle_{\pi}\in \Pic(S)$
which can be defined as
\begin{equation}\label{E:Deligne}
\langle \M, \L\rangle_{\pi}:=d_{\pi}(\M\otimes \L)\otimes d_{\pi}(\M)^{-1}\otimes d_{\pi}(\L)^{-1}\otimes d_{\pi}(\O_X).
\end{equation}
The Deligne pairing is functorial, symmetric and bilinear in each factor, and its first Chern class satisfies 
\begin{equation}\label{E:Chern-Del}
c_1(\langle \M, \L\rangle_{\pi})=\pi_*(c_1(\M)\cdot c_1(\L)).
\end{equation}
For more details, the reader is referred to  \cite[Chap. 13, Sec. 5]{ACG}.

%\begin{fact}\label{F:prop-Deligne}
%Let $\pi:X\to S$ be a family of nodal curves.
%\begin{enumerate}[(i)]
%\item The Deligne pairing is symmetric and bilinear in each factor, namely there are canonical isomorphisms
%$$\begin{aligned}
%& \langle \M, \L\rangle_{\pi}\cong \langle \L, \M\rangle_{\pi}, \\
%& \langle \M\otimes \M', \L\rangle_{\pi}\cong \langle \M, \L\rangle_{\pi}\otimes \langle \M', \L\rangle_{\pi},\\
%& \langle \M, \L\otimes \L'\rangle_{\pi}\cong \langle \M, \L\rangle_{\pi}\otimes \langle \M, \L'\rangle_{\pi},\\
%& \langle \M, \O_X\rangle_{\pi}\cong \langle \O_X, \M\rangle_{\pi}\cong \O_S.
%\end{aligned}$$
%\item The first Chern class of $\langle \M, \L\rangle_{\pi}$ is equal to
%$$c_1(\langle \M, \L\rangle_{\pi})=\pi_*(c_1(\M)\cdot c_1(\L)).$$
%\item The formation of the Deligne pairing is functorial in the following sense: given a Cartesian diagram
%$$\xymatrix{
%X' \ar[r]^g \ar[d]_{\pi'} \ar@{}[dr]|{\square} & X \ar[d]^{\pi} \\
%S' \ar[r]^f & S
%}
%$$
%we have a canonical isomorphism
%$$f^* \langle \M, \L\rangle_{\pi}\cong \langle g^*(\M), g^*(\L)\rangle_{\pi'}.$$
%\end{enumerate}
%\end{fact}

\begin{remark}\label{R:func-stack}
Since the determinant of cohomology and the Deligne pairing are functorial, we can extend their definition to the case when $\pi:\Y\to \X$ is a representable, proper and flat morphism of Artin stacks whose geometric fibers are nodal curves.
\end{remark}

\end{nota}

\begin{nota}{\emph{The Picard group of $\mgb$}}\label{sec-pic-mg}

In this subsection,  in order to fix the notation, we recall  the description of the Picard group $\Pic(\mgb)$.

The universal family $\ov{\pi}:\ov{\M}_{g,1}\to \mgb$ is a representable, proper and flat morphism whose geometric fibers
are nodal curves. Applying the determinant of cohomology to the relative dualizing sheaf $\omega_{\ov{\pi}}$ (see \ref{Pic-stack}), we define the  {\em Hodge line bundle}
\begin{equation}\label{E:Hodge}
\Lambda:= d_{\ov{\pi}}(\omega_{\ov{\pi}})\in \Pic(\mgb).
\end{equation}
%\begin{equation}\label{E:Hodge}
%\begin{aligned}
%& \Lambda(n):= d_{\ov{\pi}}(\omega_{\ov{\pi}}^{\otimes n}) \text{ for any } n\in \Z, \\
%& K_1:=\langle \omega_{\ov{\pi}}, \omega_{\ov{\pi}} \rangle_{\ov{\pi}}.\\
%\end{aligned}
%\end{equation}
%The line bundle $\Lambda(1)$ is called the {\em Hodge line bundle} and it is denoted by $\Lambda$. 
Using the functoriality of the determinant of cohomology, it is easily checked that $\Lambda$
associates to a family of stable curves $\{f:\C\to S\}\in \mgb(S)$ the line bundle
$$\Lambda(f)=\det f_*(\omega_{\C/S})\otimes \det(R^1f_*(\omega_{\C/S}))^{-1}=\bigwedge^g f_*(\omega_{\C/S})
\in \Pic(S).$$
We will abuse the notation and denote also with $\Lambda$
the restriction of $\Lambda$ to $\mg$ is also denoted by $\Lambda$. 
%We will apply the same abuse of notation to the other tautological line bundles in \eqref{E:taut-bundles-mgb}.

Recall that the boundary $\mgb\setminus \mg$ decomposes as the union of irreducible divisors $\delta_i$
for $i=0,\ldots,[g/2]$ which are defined as follows:
$\delta_0$ is the boundary divisor of $\mgb$ whose generic point is  an irreducible nodal curve
of genus $g$ with one node while, for any $1\leq i\leq [g/2]$, $\delta_i$ is the boundary divisor of $\mgb$
whose generic point is a stable curve formed by two irreducible components of genera $i$ and $g-i$ meeting in
one point. We will denote by $\Delta_i\subset \ov{M}_g$ the image of $\delta_i\subset \mgb$ via the natural map
$\mgb\to \ov{M}_g$. We set $\delta:=\sum_i \delta_i$ and denote by $\O(\delta)$ the associated line bundle on
$\mgb$ (see Fact \ref{Fact-Pic}\eqref{Fact-Pic1}). Similarly for $\O(\delta_i)\in \Pic(\mgb)$.

%Mumford  showed in \cite{Mum3} that all the tautological line bundles \eqref{E:taut-bundles-mgb} on $\mgb$ can be expressed in terms of the Hodge line bundle $\Lambda$ and the line bundle $\O(\delta)$. More precisely, he proved the following result (see also \cite[Chap. 13, Sec. 7]{ACG} for a proof).

%\begin{thm}[Mumford]\label{T:Mumford-rela}
%The  tautological line bundles on $\mgb$ satisfy the following relations
%$$\begin{aligned}
%& K_1=\Lambda^{12}\otimes \O(-\delta), \\
%& \Lambda(n)=\Lambda^{6n^2-6 n+1}\otimes \O\left(-\binom{n}{2}\delta\right).
%\end{aligned}
%$$
%\end{thm}

The Picard groups of $\mgb$ and of $\mg$ are described by the following theorem proved by Arbarello-Cornalba in
\cite[Thm. 1]{arbcorn}, based upon a  result of Harer \cite{Har}.

\begin{thm}[Harer, Arbarello-Cornalba]\label{pic-mg}
Assume that $g\geq 3$. Then
\noindent
\begin{enumerate}[(i)]
\item $\Pic(\mg)$  is freely generated by $\Lambda$.
\item  $\Pic(\mgb)$  is freely generated by $\Lambda, \O(\delta_0), \cdots, \O(\delta_{[g/2]})$.
\end{enumerate}
\end{thm}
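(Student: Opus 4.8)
The plan is to reduce the computation of $\Pic(\mgb)$ to the computation of $\Pic(\mg)$ together with the linear independence of the boundary classes, exactly along the lines of Arbarello--Cornalba. First I would establish part (i), that $\Pic(\mg)$ is freely generated by $\Lambda$ for $g\geq 3$. This is where the input of Harer is essential: by comparing the moduli stack $\mg$ with the classifying space of the mapping class group $\Gamma_g$, one has $\Pic(\mg)\cong H^2(\Gamma_g,\Z)$ (using that $\mg$ is a quotient of a contractible Teichm\"uller space by $\Gamma_g$, which acts with finite stabilizers, so that rationally the cohomology agrees and integrally one can appeal to Harer's computation of $H^2(\Gamma_g,\Z)\cong \Z$ for $g\geq 3$). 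The generator is the first Chern class $\lambda=c_1(\Lambda)$ of the Hodge bundle, and one checks via Mumford's theory that $\lambda$ is nontorsion and in fact primitive, so that $\Pic(\mg)=\Z\cdot\Lambda$. I would treat Harer's theorem as a black box here, since it is the genuinely topological ingredient.

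Next I would pass to the compactified stack. Since $\mgb$ is smooth and $\mg=\mgb\setminus\delta$ is the complement of the boundary divisor $\delta=\bigcup_{i=0}^{[g/2]}\delta_i$, Fact \ref{Fact-Pic}\eqref{Fact-Pic2} gives a right-exact sequence
\begin{equation*}
\bigoplus_{i=0}^{[g/2]}\Z\cdot\langle\O(\delta_i)\rangle\longrightarrow \Pic(\mgb)\longrightarrow \Pic(\mg)\longrightarrow 0.
\end{equation*}
By part (i), the group $\Pic(\mg)$ is free on the image of $\Lambda$, and since the restriction of the Hodge bundle on $\mgb$ to $\mg$ is $\Lambda$, the sequence splits on the level of that generator. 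Thus $\Pic(\mgb)$ is generated by $\Lambda$ together with the boundary classes $\O(\delta_0),\dots,\O(\delta_{[g/2]})$. It remains to prove that these generators are \emph{linearly independent}, which is equivalent to exactness on the left of the displayed sequence.

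The linear independence is the main obstacle, and I would handle it by the test-curve method. For each boundary divisor $\delta_i$ one constructs a complete one-parameter family $f_j\colon B_j\to\mgb$ (a curve in $\mgb$) whose image meets the boundary in a controlled way, and then one computes the intersection matrix of degrees $\bigl(\deg_{B_j}\O(\delta_i),\ \deg_{B_j}\Lambda\bigr)$. Choosing the test families so that this square matrix is nondegenerate forces the generators to be independent over $\Z$ (or at least over $\Q$); combined with the surjection above and the freeness of $\Pic(\mg)$, this yields that $\Pic(\mgb)$ is freely generated by $\Lambda,\O(\delta_0),\dots,\O(\delta_{[g/2]})$. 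Concretely, one uses families obtained by attaching a moving or degenerating elliptic/genus-$i$ tail to a fixed curve, for which the boundary intersection numbers can be read off from the local structure of $\delta_i$ and the Hodge degree from Mumford's relations (Theorem \ref{T:Mumford-rela}). The delicate point is to verify that the chosen test curves genuinely produce an invertible intersection matrix and that one has enough of them to span; this bookkeeping, together with the appeal to Harer's topological input, constitutes the real content of the argument, and I would cite \cite{arbcorn} and \cite{Har} for the full details rather than reproduce the calculation.
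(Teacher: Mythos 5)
The paper does not prove this theorem: it is imported verbatim as a known result, cited to \cite[Thm. 1]{arbcorn} with Harer's computation \cite{Har} as the topological input. Your outline faithfully reproduces the Arbarello--Cornalba strategy from those references (Harer's $H^2(\Gamma_g,\Z)\cong\Z$ for part (i), then the excision sequence of Fact \ref{Fact-Pic}\eqref{Fact-Pic2} plus test curves with a nondegenerate intersection matrix for part (ii)), which is also exactly the method the paper itself lifts to $\pdtb$ in Section \ref{S:indip-bound}; so your proposal is consistent with, and essentially identical to, the argument the paper relies on.
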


If $g=2$, then $\Pic(\mg)$ (resp. $\Pic(\mgb)$) is still generated by $\Lambda$ (resp. by $\Lambda, \O(\delta_0),
\O(\delta_1)$) but with the extra relation $\Lambda^{10}=0$ (resp. $\Lambda^{10}\otimes \O(-\delta_0-2\delta_1)=0$),
see respectively \cite{Vis} and \cite{Cor2}.

\end{nota}

\section{Boundary divisors of $\pdtb$}\label{S:bound-div}

The aim of this Section is to describe the irreducible components of the boundary divisor $\pdtb$
and their relationship with the boundary divisors of $\mgb$.

Consider the following divisors in the boundary of $\pdtb$:

\begin{enumerate}[(A)]
\item $\w{\delta}_0$ is the divisor whose generic point is a pair $(C,L)$ where $C$ is an irreducible
curve of genus $g$ with one node and $L$ is a degree $d$ line bundle on it.

\item For $1\leq i\leq g/2$ and $\kdg\nmid ( 2i-1)$, $\w{\delta}_i$ is the divisor whose generic point is a pair $(C,L)$, where
$C$ is formed by two smooth irreducible curves $C_1$ and $C_2$ of genera respectively  $i$ and $g-i$ meeting in one point, and $L$
is a line bundle of multidegree
$$(\deg_{C_1}L , \deg_{C_2}L)=
%([M_{C_1}(d)], [M_{C_2}(d)])=
\left(\left[d\frac{2i-1}{2g-2}+\frac{1}{2}\right],
\left[d\frac{2(g-i)-1}{2g-2}+\frac{1}{2}\right]\right).$$

\item For $1\leq i<g/2$ and $\kdg \mid  (2i-1)$, $\w{\delta}_i^1$ (resp. $\w{\delta}_i^2$) is  the divisor whose generic point is a pair $(C,L_1)$
(resp. $(C,L_2)$), where $C$ consists of
two smooth irreducible curves $C_1$ and $C_2$ of genera respectively  $i$ and $g-i$ meeting in one point, and $L_1$
and $L_2$ are line bundles of multidegree
$$(\deg_{C_1}L_1 , \deg_{C_2}L_1)=
%([M_{C_1}(d)], [M_{C_2}(d)])=
\left(d\frac{2i-1}{2g-2}-\frac{1}{2}, d\frac{2(g-i)-1}{2g-2}+\frac{1}{2}\right).$$
$$(\deg_{C_1}L_2 , \deg_{C_2}L_2)=
%([M_{C_1}(d)], [M_{C_2}(d)])=
\left(d\frac{2i-1}{2g-2}+\frac{1}{2}, d\frac{2(g-i)-1}{2g-2}-\frac{1}{2}\right).$$

\item If $g$ is even and $\kdg \mid(g-1)$ (i.e. $d$ is odd), $\w{\delta}_{g/2}$
%(:=\w{\delta}_{g/2}^1=\w{\delta}_{g/2}^2$)
is  the divisor whose generic point is a pair $(C,L)$, where
$C$ is formed by two smooth irreducible curves $C_1$ and $C_2$ both of genera $g/2$ meeting in one point, and $L$
is a line bundle of multidegree
$$(\deg_{C_1}L , \deg_{C_2}L)=\left(\frac{d-1}{2},\frac{d+1}{2} \right).$$

\end{enumerate}

Note that in the above cases (C) and (D), the divisibility condition $\kdg\mid(2i-1)$ is equivalent to the condition that
%$C=C_1\cup C_2$ is a $d$-special vine curve (see Proposition \ref{d-special}), or equivalently to the fact that
$M_{C_i}(d)$ and $m_{C_i}(d)$ are integers
(see Definition \ref{balanced}). Moreover, the case (D) is different from the case (C) since in the case (D)
the two components $C_1$ and $C_2$ have the same genus and
hence it is not possible to distinguish ``numerically'' a line bundle of multidegree $(\deg_{C_1}L , \deg_{C_2}L)=\left(\frac{d-1}{2},\frac{d+1}{2} \right)$
from one of multidegree $(\deg_{C_1}L , \deg_{C_2}L)=\left(\frac{d+1}{2},\frac{d-1}{2} \right)$.

\begin{notation}\label{nota-div}
{\bf Notation}: Sometimes it is convenient to unify the notation for the cases (A) and (B) and for the cases (C) and (D). For this reason, we always assume
that $\kdg\nmid (2\cdot 0-1)=-1$ (even when $\kdg=1$) and we set $\w{\delta}_{g/2}^1=\w{\delta}_{g/2}^2=\w{\delta}_{g/2}$ if
$g$ is even and $\kdg \mid(g-1)$ (i.e. if $g$ is even and $d$ is odd).
\end{notation}

As usual, we denote by $\O(\w{\delta}_i)$ the line bundle on $\pdtb$ associated to $\delta_i$ and similarly for
$\O(\w{\delta}_i^1)$ and $\O(\w{\delta}_i^2)$.
Using the above Notation \ref{nota-div}, we also set
\begin{equation}\label{E:tot-div}
\w{\delta}:=\sum_{k_{d,g}\nmid (2i-1)} \w{\delta}_i  +\sum_{k_{d,g}\mid (2i-1)} (\w{\delta}_i^1+\w{\delta}_i^2),
\end{equation}
and we denote by $\O(\w{\delta})\in \Pic(\pdtb)$ its associated line bundle. Note that, according to Notation \ref{nota-div}, if $g$ is even and $d$ is odd
then $\w{\delta}_{g/2}=\w{\delta}_{g/2}^1=\w{\delta}_{g/2}^2$ appears with coefficient two in $\w{\delta}$.

Via the natural forgetful map  $\w{\Phi}_d: \pdtb\to \mgb$, we can relate the boundary divisors of $\pdtb$ with those of $\mgb$ as follows.

\begin{thm}\label{bound-pdb}
\noindent
\begin{enumerate}[(i)]
\item \label{bound1} The boundary $\pdtb\setminus \pdt$ of $\pdtb$ consists of the irreducible divisors $\{\w{\delta}_i\: : \: \kdg \,  \nmid (2i-1) \text{ or } i=g/2\}$
and $\{\w{\delta}_i^1, \w{\delta}_i^2 \: : \: \kdg \mid (2i-1)\text{ and } i<g/2\}$.
\item \label{bound2} For any $0\leq i\leq g/2$, we have
$$\w{\Phi}_d^*\,\O(\delta_i)=\begin{sis}
&\O(\w{\delta}_i) & \text{ if } \kdg\nmid(2i-1), \\
&\O(\w{\delta}_i^1+\w{\delta}_i^2) & \text{ if } \kdg\mid(2i-1).
\end{sis}
$$
In particular, $\w{\Phi}_d^*\,\O(\delta)=\O(\w{\delta})$.
 \end{enumerate}
\end{thm}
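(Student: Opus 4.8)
The plan is to deduce everything from the single geometric fact that $\w{\Phi}_d$ identifies $\pdt$ with $\w{\Phi}_d^{-1}(\mg)$, so that set-theoretically $\pdtb\setminus\pdt=\bigcup_i \w{\Phi}_d^{-1}(\delta_i)$, and then to analyse each $\w{\Phi}_d^{-1}(\delta_i)$ separately. First I would check that a quasistable curve whose stabilization is smooth is itself smooth: contracting any exceptional component, or retaining any non-exceptional node, produces a singular stable curve. This gives $\w{\Phi}_d^{-1}(\mg)=\pdt$ and reduces part (i) to describing the irreducible components of $\w{\Phi}_d^{-1}(\delta_i)$ for each $i$.

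For part (i) I would fix the generic curve $C$ of $\delta_i$: an irreducible one-nodal curve for $i=0$, and a vine curve $C_1\cup C_2$ of type $(i,g-i)$ meeting in one node for $i\geq 1$. Over such a $C$ a component of $\w{\Phi}_d^{-1}(\delta_i)$ is determined by the multidegree of the line bundle, so the count of components is the count of properly balanced multidegrees. For $i=0$ the total degree $d$ is the only invariant, giving the single stratum $\w{\delta}_0$. For $i\geq 1$ the only constraint is the Basic Inequality of Definition \ref{balanced} for $Z=C_1$: with $w_{C_1}=2i-1$ and $k_{C_1}=1$ it confines $\deg_{C_1}L$ to the closed interval $[m_{C_1}(d),M_{C_1}(d)]$ of length one. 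This interval contains one integer when its endpoints are non-integral and two when they are integral, and I would identify integrality of the endpoints with $\kdg\mid(2i-1)$, either by the direct congruence $(2g-2)\mid(d+g-1)(2i-1)$ or by invoking Proposition \ref{d-special}, since integrality of $m_{C_1}(d)$ is exactly the statement that $C$ is $d$-special. Each admissible multidegree yields one irreducible codimension-one stratum, and these exhaust $\w{\Phi}_d^{-1}(\delta_i)$: the loci where an exceptional $\mathbb{P}^1$ is inserted at the node carry an extra $\Gm=\Aut(\mathbb{P}^1,\{0,\infty\})$ of automorphisms, hence have codimension at least two, so by Fact \ref{Fact-Pic}\eqref{Fact-Pic3} they contribute no new divisorial component. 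This gives the list of part (i), with the proviso that for $i=g/2$ the involution exchanging the two equal-genus components identifies the multidegrees $(\tfrac{d-1}2,\tfrac{d+1}2)$ and $(\tfrac{d+1}2,\tfrac{d-1}2)$, so the two strata glue into the single divisor $\w{\delta}_{g/2}$.

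For part (ii) I would compute the multiplicity of each boundary component in $\w{\Phi}_d^*\delta_i$ by a local analysis at its generic point. Since that point carries the stable curve $C$ itself, with its genuine node and no exceptional component, the stabilization is an isomorphism near $C$ and the node-smoothing parameter of the family over $\pdtb$ maps isomorphically to the one on $\mgb$; equivalently, $\w{\Phi}_d$ is smooth there. Hence the local equation of $\delta_i$ pulls back with multiplicity one to the local equation of each stratum, giving $\w{\Phi}_d^*\O(\delta_i)=\O(\w{\delta}_i)$ when $\kdg\nmid(2i-1)$ and $\w{\Phi}_d^*\O(\delta_i)=\O(\w{\delta}_i^1+\w{\delta}_i^2)$ when $\kdg\mid(2i-1)$ and $i<g/2$. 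The assertion $\w{\Phi}_d^*\O(\delta)=\O(\w{\delta})$ then follows by summing over $i$ and comparing with \eqref{E:tot-div}.

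The step I expect to be the main obstacle is the case $i=g/2$. There the branches $\w{\delta}_{g/2}^1$ and $\w{\delta}_{g/2}^2$ are not merely isomorphic but are identified by the swap involution into one divisor $\w{\delta}_{g/2}$, and the naive transverse computation still sees multiplicity one along each local branch; the real content is to show that the pull-back records both branches, so that $\w{\delta}_{g/2}$ enters $\w{\Phi}_d^*\O(\delta_{g/2})$ with multiplicity two. I would handle this by working on a local étale chart of $\mgb$ over which the two components of $C$ are labelled, so that the two strata are genuinely distinct and each appears with multiplicity one, and then descending along the degree-two swap, which forces the coefficient two. With the convention $\w{\delta}_{g/2}^1=\w{\delta}_{g/2}^2=\w{\delta}_{g/2}$ of Notation \ref{nota-div}, this is exactly the uniform formula $\w{\Phi}_d^*\O(\delta_{g/2})=\O(\w{\delta}_{g/2}^1+\w{\delta}_{g/2}^2)=\O(2\w{\delta}_{g/2})$.
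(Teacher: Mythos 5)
Your treatment of part (i) and of the multiplicities for $i<g/2$ is sound and essentially the paper's: the paper likewise reduces (i) to counting the integers in the length-one interval of the basic inequality for $C_1$ and identifies the two-integer case with $\kdg\mid(2i-1)$. The one place your route differs is where the multiplicities are computed when $\kdg\mid(2i-1)$: you work at the generic point of each $\w{\delta}_i^j$, where the curve is stable, $\Def_{(C,L)}\to\Def_C$ is the projection and each component visibly enters with coefficient one, whereas the paper works at a general point of $\w{\delta}_i^1\cap\w{\delta}_i^2$, where the curve acquires an exceptional bridge and the node-smoothing parameter pulls back as $x_1\mapsto u_1v_1$, reading off both coefficients at once. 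For $i<g/2$ the two computations agree and either one suffices.

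The genuine gap is the case $i=g/2$ with $\kdg\mid(g-1)$, and your text locates the difficulty correctly but does not resolve it. The multiplicity of the irreducible divisor $\w{\delta}_{g/2}$ in $\w{\Phi}_d^*(\delta_{g/2})$ is, by definition, the order of vanishing of a local equation of $\delta_{g/2}$ at the generic point of $\w{\delta}_{g/2}$ --- exactly the ``naive transverse computation'' you describe, and it gives one. Passing to an \'etale chart of $\mgb$ on which the two genus-$g/2$ components are labelled does not change this: there the preimage splits as $D_1\sqcup D_2$ with each $D_j$ of multiplicity one, but $D_1$ and $D_2$ are two disjoint open pieces of the \emph{single} divisor $\w{\delta}_{g/2}$, and \'etale descent preserves multiplicities of divisors; it does not convert ``two branches, each with coefficient one'' into ``coefficient two.'' So ``descending along the degree-two swap forces the coefficient two'' is not an argument --- as written, your computation actually yields $\w{\Phi}_d^*\O(\delta_{g/2})=\O(\w{\delta}_{g/2})$, and the coefficient two is then asserted without justification. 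The paper's proof of this case is genuinely different: it computes at a general point of the locus of quasistable curves with an exceptional bridge over $\delta_{g/2}$, where $x_1\mapsto u_1v_1$ and both local branches $(u_1=0)$, $(v_1=0)$ lie on $\w{\delta}_{g/2}$; it is this local picture, together with the convention $\w{\delta}_{g/2}^1=\w{\delta}_{g/2}^2=\w{\delta}_{g/2}$ of Notation \ref{nota-div}, that is read as $\O(\w{\delta}_{g/2}^1+\w{\delta}_{g/2}^2)$. To complete your proof in the form the theorem is stated, you must either carry out that bridge-locus computation and explain why it, rather than your generic-point computation, determines the asserted coefficient, or confront the apparent conflict between the two; note for instance that a coefficient two would force $\deg_{\w{F_{g/2-1,1}}}\O(\w{\delta}_{g/2})$ to be a half-integer once combined with the projection formula and $\deg_{F_{g/2-1}}\O(\delta_{g/2})=-1$, so this case cannot be dispatched by a descent slogan.
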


%For any $0\leq i\leq [g/2]$, denote by $\w{\delta}_i$ the inverse image of the boundary divisor $\delta_i$ of $\ov{\mathcal M}_g$
%via the morphism $\Phi_d:\pdb\to \mgb$.
%Since $\Phi_d$ is surjective and the $\delta_i$'s are divisors of $\mgb$, the $\w{\delta}_i$'s will be divisors of $\ov{\mathcal P}_{d,g}$ as well.

%\begin{prop}\label{irrdivisors}
%$\D_0$ is an irreducible divisor of $\pdb$. For $i=0,\dots,{\lfloor\frac{g}{2}\rfloor}$, $\D_i$ is irreducible if and only if the general curve of $\Delta_i$ is $d$-general, and this
%happens precisely when
%$$\kdg\nmid \: 2i-1.$$
%Otherwise, $\D_i=\D_i^1\cup \D_i^2$, where $\D_i^1$ and $\D_i^2$ are irreducible.
 %\dots, D_{\lfloor\frac{g}{2}\rfloor}$ are irreducible divisors of $\ov{\mathcal P}_{d,g}$.
%\end{prop}

\begin{proof}
By construction we have that $\pdtb\setminus \pdt=\w{\Phi}_d^{-1}(\mgb\setminus \mg)$
(see \ref{desc-stacks}) and moreover $\mgb\setminus \mg=\bigcup_i \delta_i$ (see \ref{sec-pic-mg}).
By the Definition \ref{balanced}, it is easy to check that we have a set-theoretical equality
\begin{equation}\label{pull-back-div}
\w{\Phi}_d^{-1}(\delta_i)=\begin{sis}
&\w{\delta}_i & \text{ if } \kdg\nmid(2i-1), \\
&\w{\delta}_i^1\cup \w{\delta}_i^2 & \text{ if } \kdg\mid(2i-1).
\end{sis}
\end{equation}
Finally, by looking at their definition, it is easy to see that the divisors $\w{\delta}_i, \w{\delta}_i^1, \w{\delta}_i^2$ are irreducible. This completes the proof of part (i).

Part (ii) is equivalent to proving that we have a scheme-theoretic equality in (\ref{pull-back-div}).
To achieve that, we need a local description of the morphism $\w{\Phi}_d:\pdtb\to \mgb$ at a general point
$(C,L)$ of $\w{\delta}_i$ or of $\w{\delta}_i^1\cap \w{\delta}_i^2$.
Recall that locally at $(C,L)$, the morphism $\w{\Phi}_d$ looks like
$$q:[\Def_{(C,L)}/\Aut(C,L)]\to [\Def_{C^{\rm st}}/\Aut(C^{\rm st})],$$
where $\Def_{C^{\rm st}}$(resp. $\Def_{(C,L)}$) is the miniversal deformation space of the stabilization $C^{\rm st}$
of $C$ (resp. of the pair $(C,L)$)
and $\Aut(C^{\rm st})$ (resp. $\Aut(C,L)$) is the automorphism group of $C^{\rm st}$ (resp. the automorphism group  of the pair $(C,L)$).
Using the results on the local structure of $\pdtb$ given in \cite[Sec. 2.15]{BFV},
we can describe explicitly the above morphism $q$  at a general point of $\w{\delta}_i$ or of $\w{\delta}_i^1\cap \w{\delta}_i^2$
in the boundary of $\pdtb$.
To this aim,  we need to distinguish between the case
$\kdg\nmid (2i-1)$ (cases (A) and (B)) and the case  $\kdg \mid (2i-1)$ (cases (C) and (D)).

Suppose first that $\kdg\nmid(2i-1)$. Consider a general point $(C,L)$ of $\w{\delta}_i$.
Since $C=C^{\rm st}$ is a general element of $\delta_i$, it is well-known that
$\Def_C=\Spf k[[x_1,\cdots, x_{3g-3}]]$ and
\begin{equation}\label{aut-gen}
\Aut(C)=\begin{cases}
\{1\} & \text{ if } i\neq 1,\\
\Z/2\Z & \text{ if } i=1,
\end{cases}
\end{equation}
where, in the second case, the unique non-trivial automorphism is the elliptic involution
on the elliptic tail of $C$. On the other hand, we have that
$\Def_{(C,L)}=\Spf k[[x_1,\cdots,x_{3g-3},t_1,\cdots, t_g]]$ and $\Aut(C,L)=\Gm$ acts trivially on it
(see \cite[Proof of Thm. 1.5, Cases (1) and (2)]{BFV}), where the coordinates $x_i$'s correspond
to the deformation of the curve $C$ and the coordinates $t_j$'s correspond to the deformation of the line bundle
$L$. The morphism $q$ is given by the natural equivariant projection $\Def_{(C,L)}\twoheadrightarrow \Def_{C}$.
Moreover, we can choose local coordinates $x_1,\cdots,x_{3g-3}$ for $\Def_C$ in such a way that
the first coordinate $x_1$ corresponds to the smoothing of the unique node of $C$ and, if $i=1$, the action of
the generator of $\Aut(C)=\Z/2\Z$ sends $x_1$ into $-x_1$ and fixes the other coordinates.
For such a choice of the coordinates, we have that the equation of $\delta_i$ inside
$\Def_C$ is given by $(x_1=0)$ and the equation of $\w{\delta}_i$ inside $\Def_{(C,L)}$ is given
by $(x_1=0)$. Since $q^*(x_1)=(x_1)$, we conclude in this case.

Suppose now that $\kdg\mid(2i-1)$ (hence that $i>0$ by Notation \ref{nota-div}). If $i<g/2$ then a general
point $(C,L)$ of $\w{\delta}_i^1\cap \w{\delta}_i^2$ consists of the two general curves $C_1$ and $C_2$ of genera
respectively $i$ and $g-i$ joined by a rational curve $R\cong \P^1$. By convention, in the case $i=g/2$
and $\kdg\mid(g-1)$, we set $\w{\delta}_{g/2}^1\cap \w{\delta}_{g/2}^2$ to be the closure of the locus
of curves consisting of two smooth curves of genera $g/2$ joined by a rational curve $R\cong \P^1$.
The stabilization $C^{\rm st}$ is obtaining by contracting the rational curve $R$ to a node $n$
and it will be a general point of $\delta_i$. As before, we have that $\Def_{C^{\rm st}}=\Spf k[[x_1,\cdots,
x_{3g-3}]]$, where $x_1$ can be chosen as the coordinate corresponding to the smoothing of the node $n$,
and $\Aut(C^{\rm st})$ is as in (\ref{aut-gen}). On the other hand, by \cite[Proof of Theorem 1.5, Case (3)]{BFV},
we have that $\Aut{(C,L)}=\Gm^2$, $\Def_{(C,L)}=\Spf k[[u_1,v_1,x_2,\cdots, x_{3g-3},t_1,\cdots,t_g]]$
where $u_1$ corresponds to the node $C_1\cap R$ and $v_1$ corresponds to the node $C_2\cap R$. Moreover,
the action of $\Gm^2$ on $\Def_{(C,L)}$ is given by $(\lambda, \mu)\cdot (u_1,v_1)=(\lambda \mu^{-1} u_1, \lambda^{-1} \mu v_1)$
while it is the identity on the other coordinates.
The morphism $q$ is induced by the equivariant morphism $\Def_{(C,L)}\to \Def_{C^{\rm st}}$ that, at the level of
rings, sends $x_1$ into $u_1\cdot v_1$ and $x_i$ into $x_i$ for $i>1$.
The equation of $\delta_i$ inside $\Def_{C^{\rm st}}$ is given by $(x_1=0)$ while the equations
of $\w{\delta}_i^1$ and $\w{\delta}_i^2$ inside $\Def_{(C,L)}$ are given by $(u_1=0)$ and $(v_1=0)$
(note that in the special case $i=g/2$ and $\kdg \mid(g-1)$, the divisor $\w{\delta}_{g/2}$, even though irreducible,
has two branches locally at $(C,L)$, which we call $\w{\delta}_{g/2}^1$ and
$\w{\delta}_{g/2}^2$, whose equations are $(u_1=0)$ and $(v_1=0)$). Since $q^*(x_1)=(u_1\cdot v_1)$,
we conclude also in this case.

\end{proof}

As a Corollary of the above Theorem \ref{bound-pdb}, we can determine also the irreducible components of
the boundary of $\pdb$. We set $\ov{\delta}_i:=\nu_d(\w{\delta}_i)$, $\ov{\delta}_i^1=\nu_d(\w{\delta}_i^1)$ and $\ov{\delta}_i^2:=\nu_d(\w{\delta}_i^2)$ according to the above Cases (A)--(B), where as usual $\nu_d:\pdtb\to \pdb$ is the
rigidification map.

\begin{cor}\label{C:bound-rig}
\noindent
\begin{enumerate}[(i)]
\item \label{C:bound-rig1} The boundary $\pdb\setminus \pd$ of $\pdb$ consists of the irreducible divisors
$\{\ov{\delta}_i\: : \: \kdg \,  \nmid (2i-1) \text{ or } i=g/2\}$
and $\{\ov{\delta}_i^1, \ov{\delta}_i^2 \: : \: \kdg \mid (2i-1)\text{ and } i<g/2\}$.
\item \label{C:bound-rig2} For any $0\leq i\leq g/2$, we have
$$\begin{sis}
& \nu_d^*\,\O(\ov{\delta}_i)= \O(\w{\delta}_i) & \text{ if } \kdg\nmid(2i-1), \\
&\nu_d^*\,\O(\ov{\delta}_i^j) = \O(\w{\delta}_i^j)  & \text{ if } \kdg\mid(2i-1) \text{ and } j=1, 2.
\end{sis}
$$
 \end{enumerate}
\end{cor}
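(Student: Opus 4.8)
The plan is to derive both statements from the single structural fact that $\nu_d:\pdtb\to\pdb$ is a $\Gm$-gerbe, hence smooth, surjective and faithfully flat; the crucial geometric input is that such a gerbe is étale-locally on $\pdb$ of the form $B\Gm\times U\to U$ (projection onto the second factor) for a scheme $U$. Everything below is an instance of this local triviality.

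First I would reduce part \eqref{C:bound-rig1} to Theorem \ref{bound-pdb}\eqref{bound1}. Since $\pd$ is an open dense substack of $\pdb$ and the square relating $\pdt,\pd,\pdtb,\pdb$ is Cartesian (both recalled in \ref{desc-stacks}), one has $\nu_d^{-1}(\pd)=\pdt$ and therefore $\nu_d^{-1}(\pdb\setminus\pd)=\pdtb\setminus\pdt$. The local model $B\Gm\times U\to U$ shows that $Z\mapsto\nu_d^{-1}(Z)$ and $W\mapsto\nu_d(W)$ are mutually inverse, inclusion- and codimension-preserving bijections between closed substacks of $\pdb$ and of $\pdtb$, carrying irreducible substacks to irreducible ones and reduced substacks to reduced ones (in a trivialization a closed $V\subseteq U$ pulls back to $B\Gm\times V$ and conversely). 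Feeding the decomposition of $\pdtb\setminus\pdt$ into irreducible components supplied by Theorem \ref{bound-pdb}\eqref{bound1} through this bijection yields part \eqref{C:bound-rig1}: the divisors $\ov{\delta}_i=\nu_d(\w{\delta}_i)$ and $\ov{\delta}_i^j=\nu_d(\w{\delta}_i^j)$ are precisely the irreducible components of $\pdb\setminus\pd$, and moreover $\nu_d^{-1}(\ov{\delta}_i)=\w{\delta}_i$ and $\nu_d^{-1}(\ov{\delta}_i^j)=\w{\delta}_i^j$ as reduced closed substacks.

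For part \eqref{C:bound-rig2} it remains to promote these equalities of reduced substacks to equalities of the associated line bundles. Both $\pdtb$ and $\pdb$ are smooth quotient stacks, so by Fact \ref{Fact-Pic}\eqref{Fact-Pic1} the boundary divisors are Cartier and $c_1:\Pic\to A^1$ is an isomorphism; hence it suffices to compare the pulled-back Cartier divisor $\nu_d^*\ov{\delta}_i$ with $\w{\delta}_i$. Because $\nu_d$ is flat, $\nu_d^*\ov{\delta}_i$ is the scheme-theoretic preimage of $\ov{\delta}_i$; and in the local model, if $\ov{\delta}_i$ is cut out by $(f=0)$ on $U$ then its preimage is cut out by the same equation on $B\Gm\times U$, hence is reduced and equals $\w{\delta}_i$ locally with multiplicity one. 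Therefore $\nu_d^*\O(\ov{\delta}_i)=\O(\nu_d^{-1}(\ov{\delta}_i))=\O(\w{\delta}_i)$, and symmetrically $\nu_d^*\O(\ov{\delta}_i^j)=\O(\w{\delta}_i^j)$; this covers in particular the irreducible divisor $\ov{\delta}_{g/2}$ in the even case without any special treatment.

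The one place requiring genuine care, and the main obstacle, is the multiplicity-one assertion in part \eqref{C:bound-rig2}: a priori the flat pullback of the reduced divisor $\ov{\delta}_i$ could acquire a nontrivial multiplicity. This is exactly what the étale-local triviality of the $\Gm$-gerbe rules out, since in a trivialization $B\Gm\times U\to U$ the gerbe direction adds no length along the fibres of a divisor. In writing up the argument I would therefore state the local model $B\Gm\times U\to U$ explicitly and invoke the flatness of $\nu_d$ precisely at the point where $\nu_d^*\ov{\delta}_i$ is identified with the reduced preimage $\w{\delta}_i$.
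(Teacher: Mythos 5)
Your proposal is correct and follows essentially the same route as the paper, which simply states that the corollary "follows straightforwardly from Theorem \ref{bound-pdb} and the fact that $\nu_d:\pdtb\to\pdb$ is a $\Gm$-gerbe such that $\nu_d^{-1}(\pd)=\pdt$." You have merely spelled out the details (local triviality of the gerbe, the induced bijection on irreducible components, and flatness giving multiplicity one in the pullback of Cartier divisors) that the authors leave implicit.
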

\begin{proof}
The Corollary follows straightforwardly from Theorem \ref{bound-pdb} and the fact that $\nu_d:\pdtb\to \pdb$ is a $\Gm$-gerbe
such that $\nu_d^{-1}(\pd)=\pdt$.
\end{proof}

\section{Independence of the boundary divisors}\label{S:indip-bound}

The aim of this Section is to prove that the line bundles corresponding to the irreducible components of the boundary of $\pdtb$ are linearly independent
in $\Pic(\pdtb)$.
More precisely, we will prove the following result.

\begin{thm}\label{T:ex-seq}
We have an exact sequence
\begin{equation}\label{ex-Pic}
0\to \bigoplus_{\stackrel{\kdg \: \nmid\:\: 2i-1}{\text{Êor } i=g/2}}\langle \O(\w{\delta}_i)\rangle
\bigoplus_{\stackrel{\kdg \mid 2i-1}{\text{Êand } i\neq g/2}}\langle \O(\w{\delta}_i^1)\rangle \oplus \langle \O(\w{\delta}_i^2)\rangle\to
\Pic(\pdtb)\to \Pic(\pdt)\to 0,
\end{equation}
where the right map is the natural restriction morphism and the left map is the natural inclusion.
\end{thm}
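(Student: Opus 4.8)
The surjectivity of the restriction map $\Pic(\pdtb)\to \Pic(\pdt)$ and the exactness in the middle of \eqref{ex-Pic} are immediate: since $\pdtb$ is a smooth quotient stack (Theorem \ref{quot-stack}) and the complement $\pdtb\setminus \pdt$ is exactly the union of the boundary divisors listed in Theorem \ref{bound-pdb}\eqref{bound1}, the sequence \eqref{ex-Pic} is precisely the localization sequence of Fact \ref{Fact-Pic}\eqref{Fact-Pic2}. The only remaining content is therefore \emph{left-exactness}, that is, the linear independence of the boundary line bundles in $\Pic(\pdtb)$.

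Following the strategy of Arbarello--Cornalba \cite{arbcorn}, the plan is to produce, for each irreducible boundary divisor, a test curve $\w{F}\to \pdtb$ (a morphism from a smooth complete curve $B$), and then to pair these test curves against the boundary divisors via $\w{F}\cdot \O(\w\delta) := \deg_B \w{F}^*\O(\w\delta)$. Since the number of test curves equals the number of boundary divisors, it suffices to show that the resulting intersection matrix is non-degenerate. For $\w\delta_0$ I would fix a general smooth curve $Y$ of genus $g-1$ together with a point $y_0\in Y$, glue $y_0$ to a varying point $t\in Y$, and equip the resulting family of irreducible one-nodal curves of genus $g$ over the base $Y$ with a degree-$d$ line bundle; this gives a test curve $\w{F}_0$ lying inside $\w\delta_0$. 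For a divisor of type $\w\delta_i$ (non-split) or $\w\delta_i^j$ (split), I would fix general smooth curves $C_1$ of genus $i$ and $C_2$ of genus $g-i$, fix a point $p_1\in C_1$, glue $p_1$ to a varying point $t\in C_2$, and equip the family (over base $C_2$) with a line bundle of the multidegree prescribed in case (B), resp.\ (C), of Section \ref{S:bound-div}. In the split case the two admissible multidegrees $L_1,L_2$ yield two test curves $\w{F}_i^1, \w{F}_i^2$ lying respectively inside $\w\delta_i^1$ and $\w\delta_i^2$.

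The matrix is then diagonal with non-zero diagonal. Each test curve consists of curves carrying a \emph{single} node and no exceptional $\P^1$, so for generic choices it lies in the open part of exactly one boundary divisor and is disjoint from all the others, killing every off-diagonal entry. The key vanishings $\w{F}_i^1\cdot \O(\w\delta_i^2)=0=\w{F}_i^2\cdot \O(\w\delta_i^1)$ hold because $\w\delta_i^1\cap\w\delta_i^2$ is supported on the locus where the node is replaced by a rational bridge (the $u_1=v_1=0$ stratum in the proof of Theorem \ref{bound-pdb}), which these families never meet. The diagonal entry is the degree of the normal bundle of the divisor along the test curve; by the local quotient description in Theorem \ref{bound-pdb} the conormal direction is the node-smoothing coordinate ($x_1$ in cases (A),(B); $u_1$, resp.\ $v_1$, in cases (C),(D)), so the diagonal entry equals $\deg\big(T_{p_1}C_1\otimes T_{\bullet}C_2\big)=\deg T_{C_2}=2-2(g-i)$ for the $\w\delta_i$-families and $\deg T_Y=4-2g$ for $\w{F}_0$, both non-zero for $g\geq 3$. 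Hence the matrix is non-degenerate, the boundary classes are independent, and \eqref{ex-Pic} is exact on the left.

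The main obstacle is the split case $\kdg\mid (2i-1)$: one must construct the line bundle realizing the multidegree $L_1$ rather than $L_2$ along the whole family and check that it is properly balanced (Definition \ref{balanced}), and one must correctly identify the normal direction of $\w\delta_i^1$ with the $u_1$-smoothing (and of $\w\delta_i^2$ with the $v_1$-smoothing) using the explicit $\Gm^2$-action on $\Def_{(C,L)}$ from the proof of Theorem \ref{bound-pdb}, so as to be sure the two test curves genuinely separate the two branches. Minor care is also needed for the stacky automorphism factors (the elliptic involution when $i=1$, and the swap of the two components when $i=g/2$), which only rescale the diagonal entries by positive rationals and do not affect non-degeneracy; for the lowest genera the $\w\delta_0$-family, whose diagonal entry $4-2g$ degenerates, must be replaced by a family transverse to $\w\delta_0$ (e.g.\ obtained from a Lefschetz pencil), exactly as in \cite{arbcorn}.
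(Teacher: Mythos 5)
Your reduction of the statement to the linear independence of the boundary classes (via Fact \ref{Fact-Pic}\eqref{Fact-Pic2} and Theorem \ref{bound-pdb}\eqref{bound1}) is exactly the paper's first step, and the test-curve strategy is the one the paper borrows from Arbarello--Cornalba. But your choice of test curves is genuinely different. The paper does not use families lying \emph{inside} the boundary divisors: it lifts Arbarello--Cornalba's families $F$, $F'$, $F_h$ (a conic bundle over $\P^1$ glued along four sections to a fixed curve of genus $g-3$ for $\w{\delta}_0$; the same with an elliptic tail for $\w{\delta}_1$; a blow-up of $\Gamma\times\Gamma$, $\Gamma$ elliptic, sandwiched between $C_1\times\Gamma$ and $C_2\times\Gamma$ for $\w{\delta}_{h+1}$), equipping each with one or two properly balanced line bundles so as to separate $\w{\delta}_i^1$ from $\w{\delta}_i^2$ in the split case. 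The resulting intersection matrix is only \emph{triangular} (those families do meet the lower-index divisors), and the coefficients are killed one at a time starting from $a_0$. Your families for $i\geq 1$ would instead give genuinely diagonal rows, and that part of your plan is sound: the two glued sections live in the two different surfaces $C_1\times C_2$ and $C_2\times C_2$, so no blow-up is needed; the multidegree is locally constant on the one-nodal stratum, so $\w{F}_i^1$ indeed misses $\overline{\w{\delta}_i^2}$; the conormal is the node-smoothing coordinate by the local description in Theorem \ref{bound-pdb}; and the entry $2-2(g-i)$ is non-zero for $g\geq 3$, $i\leq g/2$. (The gluing of the line bundle needs the small twist by $\mathrm{pr}_2^*L_2^{-1}$ so that the restrictions to the two identified sections match, after which properly balanced holds fiberwise by construction.) What the paper's choice buys is that all balancing checks reduce to three explicit special fibers.

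The genuine gap is your $\w{\delta}_0$ family, and it is present for every $g$, not only ``for the lowest genera'' as you say at the end. Gluing $y_0$ to a moving point $t\in Y$ does not define a family of nodal curves over all of $Y$: at $t=y_0$ the two sections of $Y\times Y\to Y$ being identified cross, and you must first blow up $(y_0,y_0)$. After the blow-up the fiber over $y_0$ contains the exceptional $\P^1$, so (i) the line bundle must be arranged to have degree $1$ on it to remain properly balanced (Definition \ref{balanced} forces degree $1$ on exceptional components), (ii) the diagonal entry is not $\deg T_Y=4-2g$ but $(-1)+(3-2g)=2-2g$, coming from the self-intersections of the two proper transforms, and (iii) the special fiber is a genus $g-1$ curve with a one-nodal rational tail attached at one point, which lies in $\w{\delta}_1$ as well, so this row of your matrix is not diagonal after all. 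None of this is fatal ($2-2g\neq 0$ and the extra entry only makes the matrix triangular), but as written the family is ill-defined, the stated intersection number is wrong, and the claim that every off-diagonal entry vanishes is false. Your proposed fallback of a Lefschetz-type pencil transverse to $\w{\delta}_0$ is, in effect, precisely the paper's family $\w{F}$.
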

Using Theorem \ref{bound-pdb}(\ref{bound1}) and Fact \ref{Fact-Pic}\eqref{Fact-Pic2},
we have that the exact sequence \eqref{ex-Pic} is exact except perhaps to the left.
It remains to prove that the map on the left is injective, or in other words that the line bundles associated to the boundary divisors of $\pdtb$
 are linearly independent in $\Pic(\pdtb)$.

 The strategy that we will use to prove this is the same
as the one used by Arbarello-Cornalba in \cite{arbcorn}: we shall construct maps
$B\to \pdtb$ from irreducible smooth projective curves $B$
(i.e. families of quasistable curves of genus $g$ parametrized
by $B$, endowed with a properly balanced line bundle of relative degree $d$) and compute the
degree of the pullbacks of the boundary divisors of $\Pic(\pdtb)$ to $B$.
Actually, we will construct liftings of the families $F_h$ (for $1\leq h\leq (g-2)/2$), $F$
and $F'$ used by Arbarello-Cornalba in \cite[p. 156-159]{arbcorn}.
For that reason, we will be using their notations.

 Note that, for every $n\in \Z$,  there are isomorphisms
\begin{equation}\label{shift-iso}
\begin{aligned}
\w{\phi_{d}^n}: \pdtb & \stackrel{\cong}{\longrightarrow} \ov{\mathcal Jac}_{d+n(2g-2),g}\\
(\C\to S,\L) & \mapsto (\C\to S, \L\otimes \omega_{\C/S}^{\otimes n}). \\
\end{aligned}
\end{equation}
Clearly, $\w{\phi_{d}^n}$ is an isomorphism of $\Gm$-stacks and therefore,
by passing to the $\Gm$-rigidification, it induces an isomorphism $\phi_{d}^n: \pdb\stackrel{\cong}{\rightarrow}  \ov{\mathcal J}_{d+n(2g-2),g}$.

Since $\pdtb\cong \ov{{\mathcal Jac}}_{d',g}$ if $d\equiv d' \mod (2g-2)$ (see \ref{shift-iso}),
throughout this section we can make the following
\begin{ass}
The degree $d$ satisfies $0\leq d<2g-2$.
\end{ass}

{\bf The Family $\w{F}$}

Start from a general pencil of conics in $\P^2$. Blowing up the four base points of the pencil,
we get a conic bundle $\phi:X\to \P^1$. The four exceptional divisors $E_1,E_2,E_3,E_4\subset X$ of
the blow-up of $\P^2$ are sections of $\phi$ through the smooth locus of $\phi$.
Note that $\phi$ will have three
singular fibers consisting of two incident lines.
Let $C$ be a fixed irreducible, smooth and projective curve of genus $g-3$ and $p_1,p_2, p_3, p_4$
four points of $C$. We construct a surface $Y$ by setting
$$Y=\left(X\coprod (C\times \P^1) \right)/(E_i\sim \{p_i\}\times \P^1\: :\: i=1,\cdots, 4).
$$
We get a family $f:Y \to \P^1$ of stable curves of genus $g$: the general fiber of $f$ consists of $C$ and a smooth
conic $Q$ meeting in  $4$  points (see Figure \ref{Figure1} below),
%(see \cite[Fig. 4]{arbcorn}),
while the three special fibers consist of $C$ and two lines $R_1$ and $R_2$
such that $|R_1\cap R_2|=1$, $|R_1\cap C|=|R_2\cap C|=2$ (see Figure \ref{Figure2} below).
% (see \cite[Fig. 5]{arbcorn}).

\begin{figure}[ht]
%\label{Figure1}
\begin{center}
%\vspace{.5cm}
%TeXCAD Picture [Figura4_2.pic]. Options:
%\grade{\on}
%\emlines{\off}
%\epic{\off}
%\beziermacro{\on}
%\reduce{\on}
%\snapping{\off}
%\pvinsert{% Your \input, \def, etc. here}
%\quality{8.000}
%\graddiff{0.005}
%\snapasp{1}
%\zoom{5.6569}
\unitlength .65mm % = 2.845pt
\linethickness{0.4pt}
\ifx\plotpoint\undefined\newsavebox{\plotpoint}\fi % GNUPLOT compatibility
\begin{picture}(85.55,50.324)(0,110)
\qbezier(12.78,153.574)(19.905,97.574)(34.53,116.574)
\qbezier(34.53,116.574)(46.155,136.574)(56.28,116.574)
\qbezier(56.28,116.574)(71.53,96.699)(78.78,154.324)
\put(10.828,145.765){\makebox(0,0)[cc]{$C$}}
\put(4.625,120.248){\line(1,0){.21}}
\put(4.835,120.248){\line(1,0){88.715}}
\put(88.504,122.561){\makebox(0,0)[cc]{$Q$}}
\put(36.532,111.723){\makebox(0,0)[cc]{$g-3$}}
%\put(30.532,100.723){\makebox(0,0)[cc]{Figure $1$}}
\end{picture}
\end{center}
\caption{The general fiber of $f:Y\to \P^1$}\label{Figure1}
\end{figure}
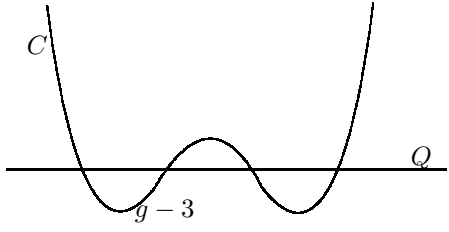
\vspace*{1.cm}

\begin{figure}[ht]
\begin{center}
%\vspace{-.5cm}
%TeXCAD Picture [Figura5.pic]. Options:
%\grade{\on}
%\emlines{\off}
%\epic{\off}
%\beziermacro{\on}
%\reduce{\on}
%\snapping{\off}
%\pvinsert{% Your \input, \def, etc. here}
%\quality{8.000}
%\graddiff{0.005}
%\snapasp{1}
%\zoom{4.7568}
\unitlength .65mm % = 2.845pt
\linethickness{0.4pt}
\ifx\plotpoint\undefined\newsavebox{\plotpoint}\fi % GNUPLOT compatibility
\begin{picture}(80.75,50.75)(0,150)
\put(10.25,182.25){\line(-1,0){.25}}
\put(10,182.25){\line(1,0){.25}}
%\emline(10.25,182.25)(52.5,150.5)
\multiput(10.25,182.25)(.04485138004,-.03370488323){942}{\line(1,0){.04485138004}}
%\end
%\emline(35,150.75)(82.75,184)
\multiput(35,150.75)(.04842799189,.03372210953){986}{\line(1,0){.04842799189}}
%\end
\qbezier(12.25,196)(19.375,140)(34,159)
\qbezier(34,159)(45.625,179)(55.75,159)
\qbezier(55.75,159)(71,139.125)(78.25,196.75)
\put(24.5,176.25){\makebox(0,0)[cc]{$R_1$}}
\put(62.75,175){\makebox(0,0)[cc]{$R_2$}}
\put(8,196.5){\makebox(0,0)[cc]{$C$}}
\end{picture}
\end{center}
\caption{The three special fibers of $f:Y\to \P^1$}\label{Figure2}
\end{figure}
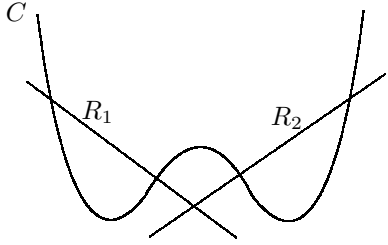

%\vspace{-6cm}

Choose a line bundle $L$ of degree $d$ on $C$, pull it back to $C\times \P^1$ and call it again
$L$.  Since $L$ is trivial when restricted to $\{p_i\}\times \P^1$, we can
glue it with the trivial line bundle on
$X$ and, thus, we obtain a line bundle $\mathcal{L}$ on the family $Y\to \P^1$ of relative degree $d$.

\begin{lemma}\label{L:bal1}
The line bundle $\mathcal{L}$ is properly balanced.
\end{lemma}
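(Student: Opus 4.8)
The plan is to verify directly the two conditions of Definition~\ref{balanced}. Since every fiber of $f$ is a \emph{stable} curve, it has no exceptional components, so the requirement $\deg_E\mathcal{L}=1$ is vacuous and I only need to check the Basic Inequality~\eqref{basic} for every subcurve $Z$ of every fiber. By Remark~\ref{rmk-ineq} it is enough to treat those $Z$ for which both $Z$ and $Z^c$ are connected. First I would record the multidegree of $\mathcal{L}$ on each fiber, which is immediate from the construction: $\mathcal{L}$ restricts to the degree-$d$ bundle $L$ on the component $C$ and is trivial on the conic-bundle part $X$; hence $\deg_C\mathcal{L}=d$, while $\mathcal{L}$ has degree $0$ on the conic $Q$ of a general fiber and on each of the two lines $R_1,R_2$ of a special fiber.

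Next I would list the relevant subcurves together with the numerical data $w_Z=\deg_Z\omega_X=2g_Z-2+k_Z$ and $k_Z$ (computed by adjunction). On the general fiber $C\cup Q$ the only case to check, up to passing to the complement by Remark~\ref{rmk-ineq}, is $Z=C$, for which $k_C=4$ and $w_C=2(g-3)-2+4=2g-4$, with $\deg_C\mathcal{L}=d$. On a special fiber $C\cup R_1\cup R_2$ the connected subcurves with connected complement are, up to Remark~\ref{rmk-ineq}, again $Z=C$ (with $k_C=2+2=4$, $w_C=2g-4$, $\deg_C\mathcal{L}=d$) and $Z=R_j$ for $j=1,2$ (here $k_{R_j}=2+1=3$, $w_{R_j}=-2+3=1$, and $\deg_{R_j}\mathcal{L}=0$).

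Finally I would substitute these values into~\eqref{basic}. For $Z=C$ the lower bound $\deg_C\mathcal{L}\ge m_C(d)$ is automatic since $d\ge 0$, and the upper bound $\deg_C\mathcal{L}\le M_C(d)$ simplifies, using $w_C=2g-4$, to $\frac{d}{g-1}\le 2$, i.e.\ to $d\le 2g-2$. For $Z=R_j$ the upper bound is automatic and the binding lower bound $0\ge m_{R_j}(d)$ reads $\frac{d}{2g-2}\le\frac{3}{2}$, i.e.\ $d\le 3(g-1)$. Both are guaranteed by the standing assumption $0\le d<2g-2$ of this section (note $2g-2\le 3(g-1)$ for $g\ge 1$), which is in fact chosen precisely so that the maximal-degree component $C$ stays within the allowed window. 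I do not expect a genuine obstacle here: the only care needed is the bookkeeping, namely correctly enumerating the connected subcurves with connected complement and computing each $w_Z$ by adjunction. Once the multidegree $(d,0,\dots)$ is fixed, the inequalities are an elementary consequence of $d<2g-2$.
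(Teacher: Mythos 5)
Your proof is correct and follows essentially the same route as the paper's: reduce via Remark~\ref{rmk-ineq} to a handful of subcurves, compute $\deg_Z\mathcal{L}$, $w_Z$ and $k_Z$, and verify the basic inequality using $0\le d<2g-2$ (your checks of $C$, $R_1$, $R_2$ on the special fibers are complementary to the paper's checks of $R_1\cup R_2$, $R_1$, $R_2$, hence equivalent by Remark~\ref{rmk-ineq}(i)). The only cosmetic difference is that the paper invokes openness of the properly balanced condition to skip the general fiber, which you instead verify directly.
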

\begin{proof}
Since the property of being properly balanced is an open condition, it is enough to check that
$\L$ is properly balanced on the three special fibers of $f:Y\to \P^1$.
According to Remark \ref{rmk-ineq}, it is enough to check the basic inequality for the
three subcurves $R_1\cup R_2$, $R_1$ and $R_2$. The balancing condition for $R_1\cup R_2$
$$\left|\deg_{R_1\cup R_2}(\mathcal{L})-\frac{d\cdot 2}{2g-2} \right|\leq \frac{4}{2}, $$
is true because $\deg_{R_1\cup R_2}(\mathcal{L})=0$ and $0\leq d<2g-2$.
The balancing condition for each of the subcurves $R_i$ ($i=1, 2$) is
$$\left|\deg_{R_i}(\mathcal{L})-\frac{d\cdot 1}{2g-2} \right|\leq \frac{3}{2}, $$
which is satisfied because $\deg_{R_i}(\mathcal{L})=0$ and $0\leq d<2g-2$.

%On the general fiber of $f:Y\to \P^1$, the balancing condition amounts to prove that
%$$\left|\deg_{Q}(\mathcal{L})-\frac{d\cdot 2}{2g-2} \right|\leq \frac{4}{2}, $$
%which is true because $\deg_{Q}(\mathcal{L})=0$ and $0\leq d<2g-2$.
\end{proof}

We call $\w{F}$ the family $f: Y\to \P^1$ endowed with the line bundle $\mathcal{L}$.
Forgetting the line bundle $\mathcal{L}$, we are left with the family $F$ of
\cite[p. 158]{arbcorn}.
%Using the results of loc. cit.,
We can compute the degree of the pull-backs of the
boundary classes in $\Pic(\pdtb)$ to the curve $\w{F}$:
\begin{equation}\label{int-F}
\left\{\begin{aligned}
& \deg_{\w{F}}\O(\w{\delta_0})=-1, &\\
& \deg_{\w{F}}\O(\w{\delta}_i)= 0 & \text{ if } 1\leq i \text{ and } \kdg\nmid (2i-1) \text{ or } i=g/2, \\
& \deg_{\w{F}}\O(\w{\delta}_i^1)=\deg_{\w{F}}\O(\w{\delta}_i^2)=0 &\text{ if } 1\leq i <g/2  \text{ and } \kdg \mid (2i-1). \\
\end{aligned}\right.
\end{equation}
The first relation follows from the fact that $\deg_{\w{F}}\O(\w{\delta_0})=\deg_F \O(\delta_0)$
(by using the projection formula) and the relation $\deg_F\O( \delta_0)=-1$ proved
in \cite[p. 158]{arbcorn}. The last two relations follow by the obvious fact
that $\w{F}$ does not meet the divisors $\w{\delta}_i$ or $\w{\delta}_i^1$ and $\w{\delta}_i^2$ for $i\geq 1$.

{\bf The Families $\w{F_1'}$ and $\w{F_2'}$}

We start with the same family of conics $\phi: X\to \P^1$ that we considered in the
construction of the family $\w{F}$. Let $C$ be a fixed irreducible, smooth and projective
curve of genus $g-3$, $E$ be a fixed irreducible, smooth and projective elliptic curve
and take points $p_1\in E$ and $p_2,p_3,p_4\in C$.
We construct a surface $Z$ by setting
$$Z=\left(X\coprod (C\times \P^1) \coprod (E\times \P^1)\right)/(E_i\sim \{p_i\}\times \P^1\: :\: i=1,\cdots, 4).
$$
We get a family $g:Z \to \P^1$ of stable curves of genus $g$: the general fiber of $g$ consists of
$C$, $E$ and a smooth conic $Q$ intersecting as in Figure \ref{Figure3}.
%such that $|Q\cap C|=3$, $|Q\cap E|=1$ a(see \cite[Fig. 6]{arbcorn}).
The three special fibers consist of $C$, $E$ and two lines $R_1$ and $R_2$,
intersecting as shown in Figure \ref{Figure 4}.
%such that $|R_1\cap R_2|=1$, $|R_1\cap C|=2$, $|R_2\cap E|=|R_2\cap C|=1$,
%$|E\cap R_1|=|E\cap C|=0$ (see \cite[Fig. 7]{arbcorn}).

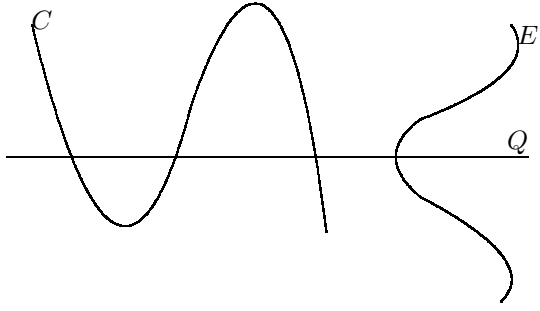
\begin{figure}[ht]
\begin{center}
%TeXCAD Picture [Figura6.pic]. Options:
%\grade{\on}
%\emlines{\off}
%\epic{\off}
%\beziermacro{\on}
%\reduce{\on}
%\snapping{\off}
%\pvinsert{% Your \input, \def, etc. here}
%\quality{8.000}
%\graddiff{0.005}
%\snapasp{1}
%\zoom{4.0000}
\unitlength .6mm % = 2.845pt
\linethickness{0.4pt}
\ifx\plotpoint\undefined\newsavebox{\plotpoint}\fi % GNUPLOT compatibility
\begin{picture}(131,70.125)(0,115)
\qbezier(18.5,176)(37.5,96.75)(53.5,158.5)
\qbezier(53.5,158.5)(72.75,214.125)(83,130.25)
\put(13,146.75){\line(1,0){1.75}}
\put(14.75,146.75){\line(1,0){112.5}}
\qbezier(123.5,176)(131,165.5)(103.5,155)
\qbezier(103.5,155)(93,147)(103.5,138)
\qbezier(103.5,138)(130.625,123.625)(121.25,114.75)
\put(127.25,173.75){\makebox(0,0)[cc]{$E$}}
\put(125,150){\makebox(0,0)[cc]{$Q$}}
\put(20.75,177){\makebox(0,0)[cc]{$C$}}
\end{picture}
\end{center}
\caption{The general fibers of $g:Z\to \P^1$.}\label{Figure3}
\end{figure}

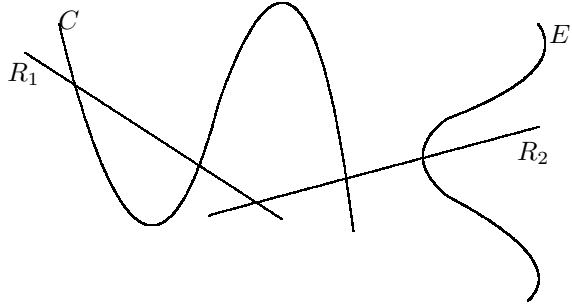
\begin{figure}[ht]
\begin{center}
%TeXCAD Picture [Figura7.pic]. Options:
%\grade{\on}
%\emlines{\off}
%\epic{\off}
%\beziermacro{\on}
%\reduce{\on}
%\snapping{\off}
%\pvinsert{% Your \input, \def, etc. here}
%\quality{8.000}
%\graddiff{0.005}
%\snapasp{1}
%\zoom{4.0000}
\unitlength .6mm % = 2.845pt
\linethickness{0.4pt}
\ifx\plotpoint\undefined\newsavebox{\plotpoint}\fi % GNUPLOT compatibility
\begin{picture}(131,60.125)(0,120)
\qbezier(18.5,176)(37.5,96.75)(53.5,158.5)
\qbezier(53.5,158.5)(72.75,214.125)(83,130.25)
\qbezier(123.5,176)(131,165.5)(103.5,155)
\qbezier(103.5,155)(93,147)(103.5,138)
\qbezier(103.5,138)(130.625,123.625)(121.25,114.75)
\put(128.25,173.75){\makebox(0,0)[cc]{$E$}}
\put(20.75,177){\makebox(0,0)[cc]{$C$}}
\put(11,169.5){\line(1,0){.25}}
%\emline(11.25,169.5)(67.5,132.75)
\multiput(11.25,169.5)(.05160550459,-.03371559633){1090}{\line(1,0){.05160550459}}
%\end
\put(51.5,133.5){\line(0,1){.25}}
%\emline(51.5,133.75)(123.75,153.25)
\multiput(51.5,133.75)(.125,.0337370242){578}{\line(1,0){.125}}
%\end
\put(10.75,165){\makebox(0,0)[cc]{$R_1$}}
\put(122.5,147.5){\makebox(0,0)[cc]{$R_2$}}
\end{picture}
\end{center}
\caption{The three special fibers of $g:Z\to \P^1$.}\label{Figure 4}
\end{figure}

We choose two line bundles of degree $d$ and $d-3$ on $C$, we pull them back to $C\times \P^1$ and call
them, respectively, $L_1$ and $L_2$. Similarly, we choose two line bundles of degree $0$ and $1$ on $E$,
we pull them back to $E\times \P^1$ and call them, respectively, $M_1$ and $M_2$.
We glue the line bundle $L_1$ (resp. $L_2$) on $C\times \P^1$, the line bundle $M_1$ (resp. $M_2$) on
$E\times \P^1$ and the line bundle $\O_X$ (resp. $\omega_{X/\P^1}^{-1}$, the relative anti-canonical
bundle of $\phi:X\to \P^1$) on $X$, obtaining a line bundle $\M_1$ (resp. $\M_2$) on $Z$
of relative degree $d$.

%Since the line bundles $L_i$ are trivial in a neighborhood of each of the three sections
%$\{p_j\}\times \P^1$ (for $j=2,3,4$) and the line bundles $M_i$ are trivial in a neighborhood of
%the section $\{p_1\}\times \P^1$, we can glue (for $i=1, 2$) the trivial line bundle on $X$,
%the line bundle $L_i$ on $C\times \P^1$ and the line bundle $M_i$ on $E\times \P^1$, obtaining
%a line bundle $\mathcal{M}_i$ on $Z$ of relative degree $d$.

\begin{lemma}
The line bundle $\mathcal{M}_1$ is properly balanced if $0\leq d\leq g-1$. The line bundle
$\mathcal{M}_2$ is properly balanced if $g-1\leq d<2g-2$.
\end{lemma}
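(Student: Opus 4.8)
The plan is to follow the strategy of the preceding lemma, reducing the verification of the basic inequality \eqref{basic} to finitely many fibers. Since the multidegree of $\mathcal{M}_i$ is locally constant on the base, it depends only on the combinatorial type of the fiber of $g:Z\to\P^1$, of which there are exactly two: the general fiber (consisting of $C$, $E$ and a smooth conic $Q$) and the special fiber, in which $Q$ degenerates into a pair of lines $R_1\cup R_2$ with $|R_1\cap R_2|=1$. I would first observe that all three special fibers are of the same type: each singular member of the pencil of conics pairs the point $p_1$ (glued to $E$) with exactly one of $p_2,p_3,p_4$, so the line through $p_1$, call it $R_1$, meets both $E$ and $C$ once, while $R_2$ meets $C$ twice. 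Thus it suffices to check one general fiber and one special fiber.

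On these fibers I would compute the numerical data $k_Z$ and $w_Z=\deg_Z\omega_X=2p_a(Z)-2+k_Z$, and, by Remark \ref{rmk-ineq}, restrict attention to subcurves $Z$ such that $Z$ and $Z^c$ are connected. On the special fiber these are, up to complementation, $E$, $C$, $R_2$ and $E\cup R_1$, with $w_E=w_{R_2}=1$, $w_C=2g-5$ and $w_{E\cup R_1}=2$; the general fiber is handled in the same way and yields the same binding constraint. Next I would record the multidegrees. For $\mathcal{M}_1$ the glued pieces $\O_X$ and $M_1$ have degree $0$, so $\deg_C\mathcal{M}_1=d$ and $\deg_E\mathcal{M}_1=\deg_{R_1}\mathcal{M}_1=\deg_{R_2}\mathcal{M}_1=0$. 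For $\mathcal{M}_2$ the only delicate point is the degree of $\omega_{X/\P^1}^{-1}$ on the two lines: each $R_i$ is a $(-1)$-curve of the conic bundle $X$, so $\omega_{X/\P^1}|_{R_i}=\omega_X|_{R_i}$ has degree $-2-R_i^2=-1$ by adjunction, whence $\deg_{R_i}\omega_{X/\P^1}^{-1}=1$; together with $\deg_C L_2=d-3$ and $\deg_E M_2=1$ this gives $\deg_C\mathcal{M}_2=d-3$ and $\deg_E\mathcal{M}_2=\deg_{R_1}\mathcal{M}_2=\deg_{R_2}\mathcal{M}_2=1$.

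Finally I would feed these data into \eqref{basic}. In every case the binding subcurve is the elliptic tail $E$, for which $w_E=1$ and $k_E=1$. For $\mathcal{M}_1$, since $\deg_E\mathcal{M}_1=0$, the left inequality reads $\frac{d}{2g-2}-\frac12\le 0$, i.e. $d\le g-1$, and one checks (using $0\le d<2g-2$) that the subcurves $C$, $R_2$ and $E\cup R_1$ impose nothing stronger; combined with $d\ge 0$ this yields $0\le d\le g-1$. For $\mathcal{M}_2$, since now $\deg_E\mathcal{M}_2=1$, the right inequality reads $\frac{d}{2g-2}+\frac12\ge 1$, i.e. $d\ge g-1$, again with no stronger condition coming from the other subcurves, giving $g-1\le d<2g-2$. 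I expect the main obstacle to be purely combinatorial bookkeeping: correctly reading off the dual graph of the degenerate fiber (in particular on which line the point $p_1$ glued to $E$ lies) and pinning down $\deg_{R_i}\omega_{X/\P^1}^{-1}=1$ via the $(-1)$-curve computation. Once these are settled, the inequality \eqref{basic} collapses to the single threshold at $d=g-1$ dictated by the elliptic tail $E$.
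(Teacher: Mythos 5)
Your proposal is correct and takes essentially the same route as the paper: reduce via Remark \ref{rmk-ineq} to the subcurves $E$, $C$, the line meeting $C$ twice, and the union of $E$ with the line through $p_1$, compute the multidegrees (including $\deg_{R_i}\omega_{X/\P^1}^{-1}=1$, which you justify by the $(-1)$-curve adjunction computation the paper only asserts), and check that the only binding constraint is $d\le g-1$ for $\mathcal{M}_1$ and $d\ge g-1$ for $\mathcal{M}_2$. The only cosmetic differences are that your labels of $R_1$ and $R_2$ are swapped relative to the paper's and that you verify the general fiber directly rather than invoking openness of the properly balanced condition.
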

\begin{proof}
The proof is straightforward and similar to the one of Lemma \ref{L:bal1}: we leave it to the reader.

%Since the property of being properly balanced is an open condition, it is enough to check that $\M$ is properly balanced on the three special fibers of $g:Z\to \P^1$. By Remark \ref{rmk-ineq}, it is enough to check the basic inequality for the subcurves $E$, $C$, $R_1$ and $R_2\cup E$. The basic inequality for $C$,
%\begin{equation*}
%\left|\deg_{C}(\mathcal{M}_i)-\frac{d\cdot (2g-5)}{2g-2} \right|\leq \frac{3}{2}
%\end{equation*}
%is satisfied since $\deg_{C}(\mathcal{M}_i)=d$ if $0\leq d\leq g-1$ and $\deg_{C}(\mathcal{M}_i)=d-3$ if $g-1\leq d<2g-2$. For $E$, we get
%\begin{equation*}
%\left|\deg_{E}(\mathcal{M}_i)-\frac{d\cdot 1}{2g-2} \right|\leq \frac{1}{2},
%\end{equation*}
%which is satisfied since $\deg_{E}(\mathcal{M}_i)=0$ if $0\leq d\leq g-1$ and $\deg_{E}(\mathcal{M}_i)=1$ if $g-1\leq d< 2g-2$.
%The basic inequality for $R_1$ is
%$$\left|\deg_{R_1}(\mathcal{M}_i)-\frac{d\cdot 1}{2g-2} \right|\leq \frac{3}{2},$$
%which is satisfied since $\deg_{R_1}(\mathcal{M}_i)=0$ if $0\leq d\leq g-1$ and $\deg_{R_1}(\mathcal{M}_i)=1$ if $g-1\leq d< 2g-2$ (note that the relative anti-canonical bundle $\omega_{X/\P^1}^{-1}$ has degree $1$ on $R_1$ and $R_2$). Finally, the basic inequality for $R_2\cup E$
%$$\left|\deg_{R_2\cup E}(\mathcal{M}_i)-\frac{d\cdot 2}{2g-2} \right|\leq \frac{2}{2}=1,$$
%is satisfied since $\deg_{R_2\cup E}(\mathcal{M}_i)=0$ if $0\leq d\leq g-1$ and $\deg_{R_2\cup E}(\mathcal{M}_i)=2$ if $g-1\leq d< 2g-2$.
\end{proof}

If $0\leq d\leq g-1$, we call $\w{F_1'}$ the family $g: Z\to \P^1$ endowed with the line bundle
$\mathcal{M}_1$; if $g-1\leq d<2g-2$, we call $\w{F_2'}$ the family $g:Z\to \P^1$ endowed with the line bundle
$\mathcal{M}_2$. Both families $\w{F_1'}$ and $\w{F_2'}$, when defined, are liftings of the family
$F'$ of \cite[p. 158]{arbcorn}. We can compute the degree of the pull-backs of some
of the boundary classes in $\Pic(\pdtb)$ to the curves $\w{F_1'}$ and $\w{F_2'}$, in the ranges of degrees where
they are defined (note that $\w{\Phi}_d^{-1}(\delta_1)$ is the  union of two irreducible divisors if and only if
$\kdg=1$, i.e. iff $d=g-1$):
\begin{equation}\label{int-F'}
\left\{\begin{aligned}
%& \deg_{\w{F_1'}}\w{\delta_0}=\deg_{\w{F_1'}}\w{\delta_0} = \deg_{F'} \delta_0=0, \\
& \deg_{\w{F_1'}}\O(\w{\delta_1})=\deg_{\w{F_2'}}\O(\w{\delta_1})=-1 \text{ if } d \neq g-1, \\
%& \deg_{\w{F_2'}}\w{\delta_1}=-1 \text{ if } g-1< d <2g-2, \\
& \deg_{\w{F_1'}}\O(\w{\delta_1^1})=\deg_{\w{F_2'}}\O(\w{\delta_1^2})=-1 \text{ and }\deg_{\w{F_1'}}\O(\w{\delta_1^2})=\deg_{\w{F_2'}}\O(\w{\delta_1^1})=0 \text{ if } d=g-1, \\
& \deg_{\w{F_1'}}\O(\w{\delta}_i)= \deg_{\w{F_2'}}\O(\w{\delta}_i)= 0 \text{ if } 1< i \text{ and }
\kdg\nmid (2i-1) \text{Ê\: or } i=g/2, \\
& \deg_{\w{F_1'}}\O(\w{\delta_i^j})= \deg_{\w{F_2'}}\O(\w{\delta_i^j})
=0 \text{ if } 1<i<g/2 \text{ and } \kdg \mid (2i-1), \text{ for } j=1, 2. \\
\end{aligned}\right.
\end{equation}
The first relation follow, by using the projection formula, from the relation
$\deg_{F'}\O(\delta_1)=-1$ proved in \cite[p. 159]{arbcorn}. The second relation is deduced in a similar way
using the projection formula and the (easily checked) fact that $\w{F_1'}$ does not meet $\w{\delta}_1^2$
and that $\w{F_2'}$ does not meet $\w{\delta}_1^1$.
The last two relations
follow from the fact that $\w{F_1'}$ and $\w{F_2'}$ do not meet the divisors $\w{\delta}_i$ or $\w{\delta}_i^1$
and $\w{\delta}_i^2$  for $i>1$.

{\bf The Families $\w{F_{h,1}}$ and $\w{F_{h,2}}$} (for $1\leq h\leq \frac{g-2}{2}$)

Fix irreducible, smooth and projective curves $C_1$, $C_2$ and $\Gamma$ of genera $h$, $g-h-1$ and
$1$, and points $x_1\in C_1$, $x_2\in C_2$ and $\gamma\in \Gamma$. Consider the surfaces
$Y_1=C_1\times \Gamma$, $Y_3=C_2\times \Gamma$ and $Y_2$ given by the blow-up of $\Gamma\times \Gamma$
at $(\gamma, \gamma)$. Let us denote by $p_2:Y_2\to \Gamma$ the map given by composing the blow-down
$Y_2\to \Gamma\times \Gamma$ with the second projection, and by $\pi_1: Y_1\to \Gamma$ and $\pi_3:Y_3\to \Gamma$ the projections
along the second factor.
As in \cite[p. 156]{arbcorn}, we set (see also Figure \ref{Figure 5}):
$$\begin{aligned}
&A=\{x_1\}\times \Gamma, \\
&B=\{x_2\}\times \Gamma, \\
&E= \text{ exceptional divisor of the blow-up of } \Gamma\times \Gamma \text{ at } (\gamma,\gamma),\\
&\Delta= \text{ proper transform of the diagonal in } Y_2, \\
&S= \text{ proper transform of } \{\gamma\}\times \Gamma \text{ in } Y_2,\\
& T= \text{ proper transform of } \Gamma \times \{\gamma\}\text{ in } Y_2.
\end{aligned}
$$

\begin{figure}[ht]
\begin{center}

%\vspace{-3cm}

%TeXCAD Picture [Figura1.pic]. Options:
%\grade{\on}
%\emlines{\off}
%\epic{\off}
%\beziermacro{\on}
%\reduce{\on}
%\snapping{\off}
%\pvinsert{% Your \input, \def, etc. here}
%\quality{8.000}
%\graddiff{0.005}
%\snapasp{1}
%\zoom{4.0000}
\unitlength .65mm % = 2.845pt
\linethickness{0.4pt}
\ifx\plotpoint\undefined\newsavebox{\plotpoint}\fi % GNUPLOT compatibility
\begin{picture}(150,60.75)(30,65)
\put(88.5,66.5){\framebox(43.5,48.25)[cc]{}}
\put(22.75,66.5){\framebox(43.5,48.25)[cc]{}}
\put(156.25,66.25){\framebox(43.5,48.25)[cc]{}}
\put(88.75,102){\line(1,0){28.25}}
\put(23,102){\line(1,0){43.25}}
\put(44,104){\makebox(0,0)[cc]{$A$}}
\put(14,100.25){\vector(0,-1){26.5}}
\put(147.5,100){\vector(0,-1){26.5}}
\put(109.25,63.5){\makebox(0,0)[cc]{}}
\put(43.5,63.5){\makebox(0,0)[cc]{}}
\put(177,63.25){\makebox(0,0)[cc]{}}
\put(109.5,63){\makebox(0,0)[cc]{$\Gamma$}}
\put(43.75,63){\makebox(0,0)[cc]{$\Gamma$}}
\put(177.25,62.75){\makebox(0,0)[cc]{$\Gamma$}}
\put(18.25,86.5){\makebox(0,0)[cc]{$\pi_1$}}
\put(151.75,86.25){\makebox(0,0)[cc]{$\pi_3$}}
\put(61.75,86.5){\makebox(0,0)[cc]{}}
\put(128.75,87.25){\makebox(0,0)[cc]{}}
\put(63,87.25){\makebox(0,0)[cc]{}}
\put(196.5,87){\makebox(0,0)[cc]{}}
\put(63,86.5){\makebox(0,0)[cc]{$C_1$}}
%\emline(88.5,66.5)(132,114.75)
\multiput(88.5,66.5)(.03372093023,.03740310078){1290}{\line(0,1){.03740310078}}
%\end
\put(123.75,98){\line(0,-1){31.5}}
\qbezier(112.25,106.5)(112.75,97)(126.25,90.5)
\put(156.25,78.25){\line(1,0){43.75}}
\put(177,81.75){\makebox(0,0)[cc]{$B$}}
\put(94.25,104.5){\makebox(0,0)[cc]{$S$}}
\put(83.5,86.75){\makebox(0,0)[cc]{$\Gamma$}}
\put(101.25,86.75){\makebox(0,0)[cc]{$\Delta$}}
\put(119.5,78){\makebox(0,0)[cc]{$T$}}
\put(113.5,108.75){\makebox(0,0)[cc]{$E$}}
\put(196.75,87.25){\makebox(0,0)[cc]{$C_2$}}
\end{picture}
\end{center}
\caption{Constructing $f:X\to \Gamma$.}\label{Figure 5}
\end{figure}

We construct a surface $X$ by identifying $S$ with $A$ and $\Delta$ with $B$. The surface $X$ comes
equipped with a projection $f:X\to \Gamma$. The fibers over all the points $\gamma'\neq \gamma$ are
shown in Figure \ref{Figure6},
%in \cite[Fig. 2]{arbcorn},
while the fiber over the point $\gamma$ is shown in Figure \ref{Figure7}.
%as in \cite[Fig. 3]{arbcorn}.

\begin{figure}[ht]
\begin{center}

%TeXCAD Picture [Figura2.pic]. Options:
%\grade{\on}
%\emlines{\off}
%\epic{\off}
%\beziermacro{\on}
%\reduce{\on}
%\snapping{\off}
%\pvinsert{% Your \input, \def, etc. here}
%\quality{8.000}
%\graddiff{0.005}
%\snapasp{1}
%\zoom{4.0000}
\unitlength .65mm % = 2.845pt
\linethickness{0.4pt}
\ifx\plotpoint\undefined\newsavebox{\plotpoint}\fi % GNUPLOT compatibility
\begin{picture}(122.75,50.5)(0,145)
\qbezier(49.25,173.5)(76.875,158.125)(102,164.25)
\qbezier(87,153.5)(95.125,175.625)(122.75,183.25)
\qbezier(17.5,153.25)(52.125,160.125)(62.25,184.5)
\put(17.75,157){\makebox(0,0)[cc]{$C_1$}}
\put(30.75,153.75){\makebox(0,0)[cc]{$h$}}
\put(72.5,167){\makebox(0,0)[cc]{$\Gamma$}}
\put(64.25,162.5){\makebox(0,0)[cc]{$1$}}
\put(113.25,183.75){\makebox(0,0)[cc]{$C_2$}}
\put(115,173.75){\makebox(0,0)[cc]{$g-h-1$}}
\end{picture}

\end{center}
\caption{The general fiber of $f:X\to \Gamma$.}\label{Figure6}
\end{figure}
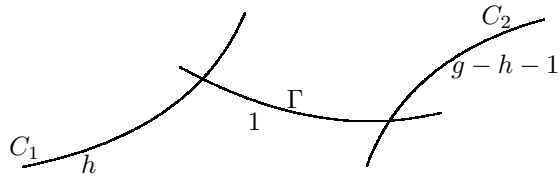

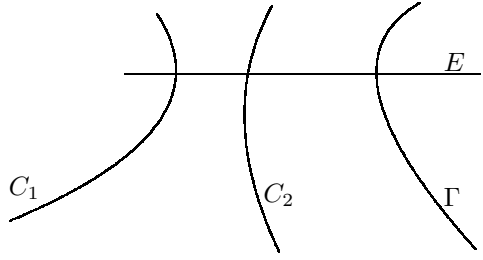
\begin{figure}[ht]
\begin{center}

%TeXCAD Picture [Figura3.pic]. Options:
%\grade{\on}
%\emlines{\off}
%\epic{\off}
%\beziermacro{\on}
%\reduce{\on}
%\snapping{\off}
%\pvinsert{% Your \input, \def, etc. here}
%\quality{8.000}
%\graddiff{0.005}
%\snapasp{1}
%\zoom{4.0000}
\unitlength .65mm % = 2.845pt
\linethickness{0.4pt}
\ifx\plotpoint\undefined\newsavebox{\plotpoint}\fi % GNUPLOT compatibility
\begin{picture}(93.5,55)(0,135)
\put(21.5,170.5){\line(1,0){72}}
\qbezier(-1.75,140.5)(43.375,159.375)(28,182.75)
\qbezier(51.25,184.25)(39.5,162.25)(52.75,134.25)
\qbezier(81.25,185)(59.125,172.125)(92.5,134.75)
\put(88.25,173){\makebox(0,0)[cc]{$E$}}
\put(1.25,147.25){\makebox(0,0)[cc]{$C_1$}}
\put(52.75,145.75){\makebox(0,0)[cc]{$C_2$}}
\put(87.75,145.5){\makebox(0,0)[cc]{$\Gamma$}}
\end{picture}
\end{center}
\caption{The special fiber of $f:X\to \Gamma$.}\label{Figure7}
\end{figure}

We will first construct several  line bundles over the three surfaces $Y_1$, $Y_2$ and $Y_3$, and then
we will glue them in a suitable way.

Consider the line bundles $M_i$ ($i=1,\cdots, 4$) on $Y_2$ given by
\begin{equation*}
M_1:=\O_{Y_2}, \: M_2:=\O_{Y_2}(\Delta), \: M_3:=\O_{Y_2}(\Delta+E), \: M_4:=\O_{Y_2}(2\Delta+E).
\end{equation*}
Using that $\deg_E \O(E)=-1$, we get that the restrictions of $M_i$ to $E$ and $T$ have degrees:
\begin{equation*}
(\deg_E M_i, \deg_T M_i)=\begin{cases}
(0,0) & \text{ if } i=1, \\
(1,0) & \text{ if } i=2, \\
(0,1) & \text{ if } i=3, \\
(1,1) & \text{ if } i=4. \\
\end{cases}
\end{equation*}
Notice that the diagonal $\ov{\Delta}$ of $\Gamma\times \Gamma$ is such that
$\O_{\Gamma\times \Gamma}(\ov \Delta)_{|\ov \Delta}=
\O_{\ov \Delta}$ since $\Gamma$ is an elliptic curve. By applying the projection
formula to the blow-up $Y_2\to \Gamma\times \Gamma$, we get that $\O_{Y_2}(\Delta)_{|\Delta}=
\O_{\Delta}(-\gamma)$. Using this, we can easily compute the
restrictions of $M_i$ to $S$ and $\Delta$ (which are canonically isomorphic to $\Gamma$):
\begin{equation}\label{restr-M}
(M_{i})_{|\Delta}=\begin{cases}
\O _{\Gamma}& \text{ if } i=1, 3\\
\O_{\Gamma}(-\gamma) & \text{ if } i=2, 4
\end{cases}
\: \text{ and } \:
(M_{i})_{|S}=\begin{cases}
\O _{\Gamma}& \text{ if } i=1, 2\\
\O_{\Gamma}(\gamma) & \text{ if } i=3, 4.
\end{cases}
\end{equation}
Consider now the integers $\alpha_1$, $\alpha_2$  defined by:
$$\begin{sis}
& \alpha_1:=\left\lfloor\frac{d(2g-2h-3)}{2g-2}\right\rfloor , \:
\alpha_2:=\left\lceil\frac{d(2g-2h-3)}{2g-2}\right\rceil  \text{, if }
\frac{d(2g-2h-3)}{2g-2}\equiv \frac{1}{2} \mod \Z \\
& \alpha_1=\alpha_2:= \text{ the unique integer which is closest to } \frac{d(2g-2h-3)}{2g-2}
\text{, otherwise. } \\
\end{sis}
$$
Take two line bundles on $C_2$ of degrees $\alpha_1$ and $\alpha_2$, and call, respectively,
$L_1$ and $L_2$ their pull-backs to $Y_3=C_2\times \Gamma$. We may assume that $L_1=L_2$
if $\alpha_1=\alpha_2$.
%Observe that the divisor $D_{h+1}\subset \pdb$ is irreducible if and only
%$\alpha_1=\alpha_2$.

Analogously, consider the integers $\beta_1$, $\beta_2$  defined by:
$$\begin{sis}
& \beta_1:=\left\lfloor\frac{d(2h-1)}{2g-2}\right\rfloor , \:
\beta_2:=\left\lceil\frac{d(2h-1)}{2g-2}\right\rceil  \text{, if }
\frac{d(2h-1)}{2g-2}\equiv \frac{1}{2} \mod \Z \\
& \beta_1=\beta_2:= \text{ the unique integer which is closest to } \frac{d(2h-1)}{2g-2}
\text{, otherwise. } \\
\end{sis}
$$
Consider two line bundles on $C_1$ of degrees $\beta_1$ and $\beta_2$, and call, respectively,
$N_1$ and $N_2$ their pull-back to $Y_1=C_1\times \Gamma$. We may assume that $N_1=N_2$
if $\beta_1=\beta_2$.
%Observe that the divisor $D_{h}\subset \pdb$ is irreducible if and only $\beta_1=\beta_2$.

We now want to define two (possibly equal) line bundles $\I_1$ and $\I_2$ on $X$, by gluing in a suitable way some of the line bundles on $Y_1$, $Y_2$ and $Y_3$, we have just defined.
We shall distinguish between several cases:

\underline{CASE A: } $\frac{d(2g-2h-3)}{2g-2}\not\equiv  \frac{1}{2} \mod \Z$ (i.e. $\alpha_1=\alpha_2$).
%and $\frac{d(2h-1)}{2g-2}\not\in \frac{1}{2}\Z/\Z $ (i.e. $\beta_1=\beta_2$).
In this case, we have that
\begin{equation}\label{deg1}
\alpha_1-\frac{1}{2}<\frac{d(2g-2h-3)}{2g-2}<\alpha_1+\frac{1}{2}
\:\mbox{ and }\:
\beta_1-\frac{1}{2}<\frac{d(2h-1)}{2g-2}\leq\beta_1+\frac{1}{2}.
\end{equation}

\underline{Subcase A1:} $0\leq d\leq g-1$.
Using the inequalities (\ref{deg1}),
we get that
$$-1\leq -1+\frac{d}{g-1}=-1+d-\frac{d(2g-2h-3)}{2g-2}-\frac{d(2h-1)}{2g-2} < d-\alpha_1-\beta_1<$$
\begin{equation}\label{ext-deg}
<1+d-\frac{d(2g-2h-3)}{2g-2}-\frac{d(2h-1)}{2g-2} =
1+\frac{d}{g-1}<2.
\end{equation}

If $d-\alpha_1-\beta_1=0$ then we define $\I_1=\I_2$ to be equal to the line bundle on $X$ obtained by gluing
$N_1$, $M_1$ and  $L_1=L_2$, which is possible since, by (\ref{restr-M}), we have that
$(N_1)_{|A}=\O_{\Gamma}=(M_1)_{|S}$ and $(L_1)_{|B}=\O_{\Gamma}=(M_1)_{|\Delta}$.
%We call $\w{F'_{h,1}}=\w{F'_{h,2}}$
%the family $f:X\to \Gamma$ endowed with this line bundle.

Otherwise, if  $d-\alpha_1-\beta_1=1$, then we define $\I_1=\I_2$ to be equal to the line bundle on $X$ obtained by
gluing the sheaves $N_1$, $M_2$ and $L_1\otimes \pi_3^*\O_{\Gamma}(-\gamma)$,
which is possible since, by (\ref{restr-M}), we have that
$(N_1)_{|A}=\O_{\Gamma}=(M_2)_{|S}$ and $(L_1\otimes \pi_3^*\O_{\Gamma}(-\gamma))_{|B}=\O_{\Gamma}(-\gamma)=(M_2)_{|\Delta}$.
%We call $\w{F'_{h,1}}=\w{F'_{h,2}}$
%the family $f:X\to \Gamma$ endowed with this line bundle.

\underline{Subcase A2:} $g-1< d <2g-2$.

Arguing similarly to the above inequality (\ref{ext-deg}), we get that $d-\alpha_1-\beta_1=1, 2$.

If $d-\alpha_1-\beta_1=1$,  then we define $\I_1=\I_2$ to be equal to the line bundle on $X$ obtained by gluing $N_1\otimes \pi_1^*\O_{\Gamma}(\gamma)$, $M_3$ and $L_1$, which is possible since, by
(\ref{restr-M}), we have that $(N_1\otimes \pi_1^*\O_{\Gamma}(\gamma))_{|A}=\O_{\Gamma}(\gamma)=(M_3)_{|S}$ and
$(L_1)_{|B}=\O_{\Gamma}=(M_3)_{|\Delta}$.

If $d-\alpha_1-\beta_1=2$,  then we define $\I_1=\I_2$ to be equal to the line bundle on $X$ obtained by gluing $N_1\otimes \pi_1^*\O_{\Gamma}(\gamma)$, $M_4$ and $L_1\otimes \pi_3^*\O_{\Gamma}(-\gamma)$,
which is possible since, by (\ref{restr-M}), we have that $(N_1\otimes \pi_1^*\O_{\Gamma}(\gamma))_{|A}=\O_{\Gamma}(\gamma)=(M_4)_{|S}$ and
$(L_1\otimes \pi_3^*\O_{\Gamma}(-\gamma))_{|B}=\O_{\Gamma}(-\gamma)=(M_4)_{|\Delta}$.

%\underline{CASE B: } $\frac{d(2g-2h-3)}{2g-2}\not\in \frac{1}{2}\Z/\Z$ (i.e. $\alpha_1=\alpha_2$)
%and $\frac{d(2h-1)}{2g-2}\in \frac{1}{2}\Z/\Z$ (i.e. $\beta_1\neq\beta_2$).

%\underline{Subcase B1:} $0\leq d< g-1$.

%\underline{Subcase B2:} $g-1< d <2g-2$.

\underline{CASE B: } $\frac{d(2g-2h-3)}{2g-2}\equiv \frac{1}{2} \mod \Z$ (i.e. $\alpha_1=\alpha_2-1$).
%and $\frac{d(2h-1)}{2g-2}\not \in \frac{1}{2}\Z/\Z$ (i.e. $\beta_1=\beta_2$).

In this case, we have that
$\alpha_1+\frac{1}{2}\frac{d(2g-2h-3)}{2g-2}=\alpha_2-\frac{1}{2}$, $\beta_1-\frac{1}{2}<\frac{d(2g-2h-3)}{2g-2}\leq\beta_1+\frac{1}{2}$,
%\:\mbox{ and }\:
and that
$\beta_2-\frac{1}{2}\leq\frac{(2h-1)}{2g-2}<\beta_2+\frac{1}{2}.
$
So, arguing similarly to the above inequality (\ref{ext-deg}), we get that
$$d-\alpha_1-\beta_2=\begin{cases}
1 & \text{ if } 0\leq d\leq  g-1,\\
2 & \text{ if } g-1 < d <2g-2.
\end{cases}
$$

If $0\leq d\leq g-1$, we define $\I_1$ to be equal to the line bundle on $X$ obtained by
gluing the sheaves $N_2$, $M_2$ and $L_1\otimes \pi_3^*\O_{\Gamma}(-\gamma)$, which is possible since,
by (\ref{restr-M}), we have that
$(N_2)_{|A}=\O_{\Gamma}=(M_2)_{|S}$ and $(L_1\otimes
\pi_3^*\O_{\Gamma}(-\gamma))_{|B}=\O_{\Gamma}(-\gamma)=(M_2)_{|\Delta}$.

If $g-1< d <2g-2$, we define $\I_1$ to be equal to the line bundle on $X$ obtained by gluing
$N_2\otimes \pi_1^*\O_{\Gamma}(\gamma)$, $M_4$ and $L_1\otimes \pi_3^*\O_{\Gamma}(-\gamma)$.
which is possible since, by (\ref{restr-M}), we have that $(N_2\otimes
\pi_1^*\O_{\Gamma}(\gamma))_{|A}=\O_{\Gamma}(\gamma)=(M_4)_{|S}$ and
$(L_1\otimes \pi_3^*\O_{\Gamma}(-\gamma))_{|B}=\O_{\Gamma}(-\gamma)=(M_4)_{|\Delta}$.

Similarly,  we get that
$$d-\alpha_2-\beta_1=\begin{cases}
0 & \text{ if } 0\leq d< g-1,\\
1 & \text{ if } g-1 \leq d <2g-2.
\end{cases}
$$

If $0\leq d <g-1$, we define $\I_2$ to be equal to the line bundle on $X$ obtained by gluing
$N_1$, $M_1$ and $L_2$,
which is possible since, by (\ref{restr-M}), we have that $(N_1)_{|A}=\O_{\Gamma}=(M_1)_{|S}$ and
$(L_1)_{|B}=\O_{\Gamma}=(M_1)_{|\Delta}$.

If $g-1\leq d <2g-2$, we define $\I_2$ to be equal to the line bundle on $X$ obtained by gluing
$N_1\otimes \pi_1^*\O_{\Gamma}(\gamma)$, $M_3$ and $L_2$,
which is possible since, by (\ref{restr-M}), we have that $(N_1\otimes
\pi_1^*\O_{\Gamma}(\gamma))_{|A}=\O_{\Gamma}(\gamma)=(M_3)_{|S}$ and
$(L_2)_{|B}=\O_{\Gamma}=(M_3)_{|\Delta}$.

\begin{lemma}
The line bundles $\I_1$ and $\I_2$ on $X$ are properly balanced of relative degree $d$.
\end{lemma}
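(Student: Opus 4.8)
The plan is to check the two defining conditions of a properly balanced line bundle (Definition~\ref{balanced}) for $\I_1$ and $\I_2$ on the family $f:X\to\Gamma$: that the relative degree equals $d$, and that the basic inequality (\ref{basic}) holds on every relevant subcurve of every fibre.

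First I would compute the relative degree. On the general fibre $C_1\cup\Gamma\cup C_2$ of Figure~\ref{Figure6}, the line bundle $\I_j$ restricts to a twist of $N_\bullet$ on $C_1$, to $M_\bullet$ on the middle elliptic component, and to a twist of $L_\bullet$ on $C_2$. The twisting factors $\pi_1^*\O_\Gamma(\gamma)$ and $\pi_3^*\O_\Gamma(-\gamma)$ are pulled back from the base, hence have degree $0$ on the fibres; so $\deg_{C_1}\I_j=\beta_\bullet$, $\deg_{C_2}\I_j=\alpha_\bullet$, and $\deg_\Gamma\I_j$ equals the coefficient $c\in\{0,1,2\}$ of $\Delta$ in the chosen $M_\bullet$. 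In each of the cases A1, A2, B the sheaves were selected precisely so that $d-\alpha_\bullet-\beta_\bullet=c$, whence the total degree is $\beta_\bullet+c+\alpha_\bullet=d$.

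Next I would reduce the basic inequality to the single special fibre $X_\gamma$ of Figure~\ref{Figure7}. Being properly balanced depends only on the combinatorial type of the fibre and on the multidegree of the restricted bundle, both constant over $\Gamma\setminus\{\gamma\}$, and it is an open condition on the base; hence if $X_\gamma$ is properly balanced then so is every fibre (were it to fail on a general fibre it would fail on the dense set $\Gamma\setminus\{\gamma\}$, and by closedness of the failure locus also at $\gamma$). It therefore suffices to analyse $X_\gamma$, whose dual graph is a star with centre $E\cong\P^1$ joined to $C_1$ (genus $h$), $C_2$ (genus $g-h-1$) and $T\cong\Gamma$ (genus $1$); since $E$ meets the rest of the curve in three points it is \emph{not} an exceptional component, so the condition ``$\deg_E=1$'' does not apply. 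By Remark~\ref{rmk-ineq} the basic inequality need only be tested on connected subcurves with connected complement, which up to complementation are exactly $C_1$, $C_2$, $T$, each with $k_Z=1$ and $w_{C_1}=2h-1$, $w_{C_2}=2g-2h-3$, $w_T=1$; it reads $\left|\deg_Z\I_j-\frac{d\,w_Z}{2g-2}\right|\leq\frac12$. For $C_1$ and $C_2$ this holds because $\beta_\bullet$ and $\alpha_\bullet$ are by definition the integers nearest to $\frac{d(2h-1)}{2g-2}$ and $\frac{d(2g-2h-3)}{2g-2}$ (the two neighbouring integers in the half-integer case), hence lie within $\frac12$ of their targets; for $T$ one has $\deg_T\I_j=\deg_T M_\bullet\in\{0,1\}$, and I would check case by case that the construction picks $\deg_T=0$ when $0\leq d\leq g-1$ and $\deg_T=1$ when $g-1\leq d<2g-2$, matching the target $\frac{d}{2g-2}$.

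The only genuinely delicate point is reading off the dual graph of $X_\gamma$ correctly — recognising that the curve $E$ created by blowing up $(\gamma,\gamma)$ acquires three nodes after the gluing, so that it is not exceptional in the sense of Definition~\ref{balanced} and no degree is forced on it. Everything else is the routine bookkeeping of matching each sub-case (A1, A2, B, and $\I_1$ versus $\I_2$) to the prescribed $M_\bullet$, $N_\bullet$, $L_\bullet$, once the three inequalities above are in place.
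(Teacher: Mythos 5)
Your proposal is correct and follows essentially the same route as the paper: reduce to the special fibre $f^{-1}(\gamma)$ by openness of the properly balanced condition, use Remark \ref{rmk-ineq} to restrict attention to the subcurves $C_1$, $C_2$, $T$, and verify the basic inequality there from the defining properties of $\alpha_\bullet$, $\beta_\bullet$ and the case-by-case choice of $M_\bullet$ (your degree count on the general fibre, and your explicit remark that $E$ is not an exceptional component, are harmless variants of the paper's bookkeeping on the special fibre).
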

\begin{proof}
The proof is straightforward and similar to the one of Lemma \ref{L:bal1}: we leave it to the reader.

\end{proof}

We call $\w{F_{h,1}}$ the family $f: X\to \Gamma$ endowed with the line bundle
$\I_1$ and  $\w{F_{h,2}}$ the family $f:X\to \Gamma$ endowed with the line bundle
$\I_2$. Note that $\w{F_{h,1}}=\w{F_{h,2}}$ if and only if we are in case A, which happens exactly when
$\kdg\nmid 2h+1$.
% or in other words when the divisor $\D_{h+1}\subset \pdb$ is irreducible.
Both families $\w{F_{h,1}}$ and $\w{F_{h,2}}$ are liftings of the family
$F_h$ of \cite[p. 156]{arbcorn}. We can compute the degrees of the pull-backs of some
of the boundary classes in $\Pic(\pdtb)$ to the curves $\w{F_{h,1}}$ and $\w{F_{h,2}}$:

\begin{equation}\label{int-Fh}
\left\{\begin{aligned}
& \deg_{\w{F_{h,1}}}\O(\w{\delta_{h+1}})=
%\deg_{\w{F_{2,h}}}\w{\delta_{h+1}}=
-1 \text{ if } \kdg \nmid 2h+1 \text{ or } h+1=g/2, \\
& \deg_{\w{F_{h,1}}}\O(\w{\delta_{h+1}^1})=\deg_{\w{F_{h, 2}}}\O(\w{\delta_{h+1}^2})=-1 \text{ if } \kdg \mid 2h+1 \text{ and } h+1\neq g/2, \\
& \deg_{\w{F_{h,1}}}\O(\w{\delta_{h+1}^2})= \deg_{\w{F_{h, 2}}}\O(\w{\delta_{h+1}^1})= 0 \text{ if }  \kdg \mid 2h+1 \text{ and }Êh+1\neq g/2,\\
& \deg_{\w{F_{h, 1}}}\O(\w{\delta}_i)
%\deg_{\w{F_{2,h}}}\w{\delta}_i
=0 \text{ if } h+1<i \text{ and } \kdg \nmid (2i-1) \text{ or } i=g/2, \\
& \deg_{\w{F_{h, 1}}}\O(\w{\delta_i^j})= \deg_{\w{F_{h, 2}}}\O(\w{\delta_i^j})=0 \text{ if } h+1<i<g/2 \text{ and } \kdg \mid (2i-1), \text{ for } j=1, 2. \\
\end{aligned}\right.
\end{equation}
The first relation follow, by using the projection formula, from the relation
$\deg_{F_h}$ $\O(\delta_{h+1})=-1$ proved in \cite[p. 157]{arbcorn}. The second and third relations
are deduced in a similar way using the projection formula and the (easily checked) fact that $\w{F_{h,1}}$ does not meet
$\w{\delta}_{h+1}^2$ and  $\w{F_{h,2}}$ does not meet  $\w{\delta}_{h+1}^1$.
The last two relations
follow from the fact that $\w{F_{h,1}}$ and $\w{F_{h,2}}$ do not meet the divisors $\w{\delta}_i$ or $\w{\delta}_i^1$
and $\w{\delta}_i^2$ for  $i>h+1$.

With the help of the above families, we can finally conclude the proof of our main theorem.

\begin{proof}[Proof of Theorem \ref{T:ex-seq}]
As observed before,  it is enough to prove that the line bundles  associated to the boundary
divisors $\{\w{\delta}_i\: : \: \kdg\nmid 2i-1 \text{ or } i=g/2\}$,  $\{\w{\delta}_i^1, \w{\delta}_i^2\: :\: \kdg \mid 2i-1 \text{ and }Êi\neq g/2\}$
(for $0\leq i\leq g/2$) are  linearly independent on $\pdtb$.
Suppose there is a linear relation
\begin{equation}\label{rel-div}
\O\left(\sum_{\stackrel{\kdg\nmid \: 2i-1}{\text{ or } i=g/2}} a_i \w{\delta}_i+\sum_{\stackrel{\kdg \mid 2i-1}{\text{ and } i\neq g/2}}  (a_i^1\w{\delta}_i^1+a_i^2\w{\delta_2^i})\right)=\O,
\end{equation}
in the Picard group of $\pdtb$. We want to prove that all the above coefficients $a_i$, $a_i^1$ and $a_i^2$ are
zero. Pulling back the above relation (\ref{rel-div}) to the curve $\w{F}\to \pdtb$ and using the formulas
(\ref{int-F}), we get that $a_0=0$. Pulling back (\ref{rel-div}) to the curves $\w{F_1'}\to \pdtb$ and $\w{F_2'}\to \pdtb$
(in the range of degrees in which they are defined) and using the formulas (\ref{int-F'}),
we get that $a_1=0$ if $\kdg\nmid 1$ (i.e. if $d\neq g-1$) or that $a_1^1=a_1^2=0$ if $\kdg\mid1$ (i.e. if $d=g-1$).
Finally, by pulling back the relation (\ref{rel-div}) to the families $\w{F_{h,1}}\to \pdtb$ and $\w{F_{h,2}}\to \pdtb$
(for any $1\leq h\leq (g-2)/2$) and using the formulas
(\ref{int-Fh}), we get that $a_{h+1}=0$ if $\kdg \nmid (2h+1)$ or $h+1=g/2$ and $a_{h+1}^1=a_{h+1}^2=0$ if $\kdg \mid(2h+1)$
and $h+1\neq g/2$, which concludes the proof.
\end{proof}

As a corollary of the above Theorem \ref{T:ex-seq}, we can prove that the boundary line bundles of $\pdb$ are linearly independent.

\begin{cor}\label{C:ex-seq-rig}
We have an exact sequence
\begin{equation}\label{ex-Pic-rig}
0\to \bigoplus_{\stackrel{\kdg \: \nmid\:\: 2i-1}{\text{ or } i=g/2}}\langle \O(\ov{\delta}_i)\rangle
\bigoplus_{\stackrel{\kdg \mid 2i-1}{\text{ and } i\neq g/2}}\langle \O(\ov{\delta}_i^1)\rangle \oplus \langle \O(\ov{\delta}_i^2)\rangle\to
\Pic(\pdb)\to \Pic(\pd)\to 0,
\end{equation}
where the right map is the natural restriction morphism and the left map is the natural inclusion.
\end{cor}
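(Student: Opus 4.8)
The plan is to deduce this corollary from Theorem \ref{T:ex-seq} by transporting linear independence across the gerbe pullback $\nu_d^*$. First I would record that, by Theorem \ref{quot-stack}(i), the stack $\pdb$ is a smooth quotient stack, so every part of Fact \ref{Fact-Pic} applies to it. Since the irreducible components of the boundary divisor $\pdb\setminus \pd$ are exactly the divisors $\{\ov{\delta}_i\}$ and $\{\ov{\delta}_i^1,\ov{\delta}_i^2\}$ listed in Corollary \ref{C:bound-rig}(\ref{C:bound-rig1}), Fact \ref{Fact-Pic}(\ref{Fact-Pic2}) immediately gives that the sequence \eqref{ex-Pic-rig} is exact in the middle and that the restriction map $\Pic(\pdb)\to \Pic(\pd)$ is surjective. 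Hence it only remains to establish exactness on the left, i.e.\ that the boundary line bundles of $\pdb$ are linearly independent in $\Pic(\pdb)$.

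For this I would argue that $\nu_d^*$ carries each generator on the left of \eqref{ex-Pic-rig} to the matching generator on the left of the sequence \eqref{ex-Pic} of Theorem \ref{T:ex-seq}. Concretely, suppose a linear relation
\[
\O\left(\sum_{\stackrel{\kdg\not|\: 2i-1}{\text{ or } i=g/2}} a_i\, \ov{\delta}_i+\sum_{\stackrel{\kdg | 2i-1}{\text{ and } i\neq g/2}}(a_i^1\ov{\delta}_i^1+a_i^2\ov{\delta}_i^2)\right)=\O
\]
holds in $\Pic(\pdb)$. Applying the group homomorphism $\nu_d^*$ and using Corollary \ref{C:bound-rig}(\ref{C:bound-rig2}), which asserts $\nu_d^*\,\O(\ov{\delta}_i)=\O(\w{\delta}_i)$ and $\nu_d^*\,\O(\ov{\delta}_i^j)=\O(\w{\delta}_i^j)$, this relation is transformed into
\[
\O\left(\sum_{\stackrel{\kdg\not|\: 2i-1}{\text{ or } i=g/2}} a_i\, \w{\delta}_i+\sum_{\stackrel{\kdg | 2i-1}{\text{ and } i\neq g/2}}(a_i^1\w{\delta}_i^1+a_i^2\w{\delta}_i^2)\right)=\O
\]
in $\Pic(\pdtb)$. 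By the injectivity of the left map in the exact sequence \eqref{ex-Pic} of Theorem \ref{T:ex-seq}, all the coefficients $a_i$, $a_i^1$, $a_i^2$ vanish, which is precisely the asserted linear independence and completes the exactness of \eqref{ex-Pic-rig} on the left.

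The heavy lifting here has already been done in the proof of Theorem \ref{T:ex-seq}, namely the non-degeneracy of the intersection matrix against the test families $\w{F}$, $\w{F_1'}$, $\w{F_2'}$, $\w{F_{h,1}}$ and $\w{F_{h,2}}$; consequently there is no genuine obstacle in the present argument, which is a purely formal transport of that result along $\nu_d$. The only point deserving care is that $\nu_d^*$ is a homomorphism sending each boundary generator of $\pdb$ to the corresponding boundary generator of $\pdtb$, which is exactly the content of Corollary \ref{C:bound-rig}(\ref{C:bound-rig2}). Note in particular that one does not even need the injectivity of $\nu_d^*$ on all of $\Pic(\pdb)$: it suffices that $\nu_d^*$ detects relations among the boundary classes, which is guaranteed by Theorem \ref{T:ex-seq}.
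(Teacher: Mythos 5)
Your proof is correct and follows essentially the same route as the paper: exactness in the middle and on the right comes from Corollary \ref{C:bound-rig}\eqref{C:bound-rig1} together with Fact \ref{Fact-Pic}\eqref{Fact-Pic2}, and left exactness is obtained by transporting a putative relation along $\nu_d^*$ via Corollary \ref{C:bound-rig}\eqref{C:bound-rig2} and invoking Theorem \ref{T:ex-seq}. Your closing remark that full injectivity of $\nu_d^*$ is not actually needed is a nice observation, but it does not change the substance of the argument.
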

\begin{proof}
As observed before, the only thing to prove is that the above sequence is exact on the left, or in other words
that the boundary line bundles $\{\O(\ov{\delta}), \O(\ov{\delta}_i^1), \O(\ov{\delta}_i^2)\}$ are linearly independent
in $\Pic(\pdb)$. This follows from Theorem \ref{T:ex-seq} using Corollary \ref{C:bound-rig}\eqref{C:bound-rig2}
and the fact that the pull-back map $\nu_d^*:\Pic(\pdb)\to \Pic(\pdtb)$ is injective, as observed in the introduction
(see diagram \eqref{E:4-Picard}).

\end{proof}

\section{Tautological line bundles}\label{S:taut-lb}

The aim of this section is to introduce some natural line bundles on $\pdtb$, which we call tautological
line bundles, and to determine the relations among them.

Let $\pi: \univb \rightarrow \pdtb$ be the universal family over $\pdtb$ (see \cite{melo2} for a modular description of $\univb$). The stack $\univb$ comes equipped with two natural line bundles: the universal line bundle $\L_d$ and
the relative dualizing sheaf $\omega_{\pi}$. Since  $\pi$ is a representable, flat and proper morphism whose geometric fibers are nodal curves, we can apply the formalism of the determinant of cohomology and of the Deligne
pairing (see  \ref{Pic-stack}) to produce some natural line bundles on $\pdtb$ which we call
{\em tautological} line bundles:
\begin{equation}\label{E:taut-lb}
\begin{aligned}
&K_{1,0}:=\langle \omega_{\pi}, \omega_{\pi} \rangle_{\pi}, \\
&K_{0,1}:=\langle \omega_{\pi}, \L_d \rangle_{\pi}, \\
&K_{-1,2}:=\langle \L_d, \L_d \rangle_{\pi}, \\
&\Lambda(n,m)=d_{\pi}(\omega_{\pi}^n\otimes \L_d^m) & \text{ for } m, n \in \Z.
\end{aligned}
\end{equation}
By abuse of notation, we use the same notation for the restriction of a tautological class to the open
substack $\pdt$. Using Facts \ref{E:Chern-det} and \ref{E:Chern-Del}, the first Chern classes of the above tautological line bundles are given by
\begin{equation}\label{E:taut-Chern}
\begin{aligned}
& \kappa_{1,0}:=c_1(K_{1,0})= \pi_*(c_1(\omega_{\pi})^2),\\
& \kappa_{0,1}:=c_1(K_{0,1})= \pi_*(c_1(\omega_{\pi})\cdot c_1(\L_d)), \\
& \kappa_{-1,2}:=c_1(K_{-1,2})= \pi_*(c_1(\L_d)^2), \\
& \lambda(n, m)=c_1(\Lambda(n,m))=c_1(\pi_{!}(\omega_{\pi}^n\otimes \L_d^m)) \text{ for any } n, m\in \Z.
\end{aligned}
\end{equation}
Note that, if $k={\mathbb C}$, the image of the classes $\kappa_{i,j}$ via the natural map
$A^1(\pdt)\to H^2(\pdt, \Z)\to H^2(\Hol_g^d,\Z)$
are, up to sign, the $\kappa_{i,j}$ classes that were considered by Erbert and Randal-Williams in \cite[Sec. 2.4]{ERW}. 

The pull-back of the Hodge line bundle  \eqref{E:Hodge} of $\mgb$ via the natural map $\w{\Phi}_d:\pdtb\to \mgb$ is a tautological line bundle on $\pdtb$.

\begin{lemma}\label{L:comp-taut}
We have that  $\w{\Phi}_d^*(\Lambda)=\Lambda(1,0)$.
%\begin{equation}\label{E:pull-taut}
%\begin{aligned}
%& \w{\Phi}_d^*(K_1)=K_{1,0}, \\
%& \w{\Phi}_d^*(\Lambda(n))=\Lambda(n,0).
%\end{aligned}
%\end{equation}
%In particular, $\Lambda(1,0)$ is the pull-back of the Hodge line bundle $\Lambda$ on $\mgb$.
\end{lemma}
\begin{proof}
Consider the diagram
\begin{equation}\label{univ-fam}
\xymatrix{
\univb \ar[r]^{\w{\Phi}_{d,1}} \ar[d]_{\pi} & \mguniv \ar[d]^{\ov{\pi}}\\
\pdtb \ar[r]^{\w{\Phi}_d} &  \mgb \\
}
\end{equation}
Recall from Section \ref{desc-stacks} that the map $\w{\Phi}_d$ sends an element  $(\C\to S, \L)\in \pdtb(S)$ into the stabilization $\C^{\rm st}\to S\in \mgb(S)$.
Now it is well-known that for every quasi-stable (or more generally semistable) curve $X$ with stabilization morphism
$\psi:X\to X^{\rm st}$, the pull-back via $\psi$ induces an isomorphism
$\psi^*:H^0(X^{\rm st},\omega_{X^{\rm st}})\stackrel{\cong}{\to} H^0(X,\omega_X)$.
Therefore, the relative dualizing sheaves of the families $\pi$ and $\ov{\pi}$ are related by
\begin{equation}\label{E:pullback-can}
\w{\Phi}_{d,1}^*(\omega_{\ov{\pi}})=\omega_{\pi}.
\end{equation}
We conclude by using the functoriality of the determinant of cohomology. 
% we get that
%\begin{equation*}
%\begin{aligned}
%& \w{\Phi}_d^*(K_1)=\w{\Phi}_d^*(\langle \omega_{\ov{\pi}},\omega_{\ov{\pi}}\rangle_{\ov{\pi}})=
%\langle \w{\Phi}_{d,1}^*(\omega_{\ov{\pi}}),\w{\Phi}_d^*(\omega_{\ov{\pi}})\rangle_{\pi}=
%\langle \omega_{\pi},\omega_{\pi}\rangle_{\pi}= K_{1,0}, \\
%& \w{\Phi}_d^*(\Lambda(n))=\w{\Phi}_d^*(d_{\ov{\pi}}(\omega_{\ov{\pi}}^n))=d_{\pi}(\w{\Phi}_d^*(\omega_{\ov{\pi}}^n))=
%d_{\pi}(\omega_{\pi}^n)= \Lambda(n,0),
%\end{aligned}
%\end{equation*}
\end{proof}

There are some relations between the tautological line bundles on $\pdtb$, as shown in the following.

\begin{thm}\label{T:taut-rel}
The tautological line bundles on $\pdtb$ satisfy the following relations in the rational Picard group $\Pic(\pdtb)\otimes \Q$:
\begin{enumerate}[(i)]
\item \label{T:taut-rel1} $K_{1,0}=\Lambda(1,0)^{12} \otimes \O(-\w{\delta})$,
\item \label{T:taut-rel2} $K_{0,1}= \Lambda(1,1) \otimes \Lambda (0,1)^{-1}$,
\item \label{T:taut-rel3} $K_{-1,2}=\Lambda(0,1)\otimes \Lambda(1,1)\otimes \Lambda(1,0)^{-2}$,
\item \label{T:taut-rel4} $\Lambda(n,m)=\Lambda(1,0)^{6n^2-6n-m^2+1}\otimes \Lambda(1,1)^{mn+\binom{m}{2}}\otimes
\Lambda(0,1)^{-mn+\binom{m+1}{2}}\otimes \O\left(-\binom{n}{2}\cdot \w{\delta}\right).$
\end{enumerate}
\end{thm}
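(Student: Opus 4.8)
The plan is to reduce every statement to an identity of first Chern classes and then invoke the isomorphism $c_1\colon \Pic(\pdtb)\to A^1(\pdtb)$ of Fact \ref{Fact-Pic}\eqref{Fact-Pic1} (valid since $\pdtb$ is a smooth quotient stack), so that any relation proved among the classes $\lambda(n,m)$ and $\kappa_{i,j}$ lifts verbatim to the corresponding relation among line bundles. Relation \eqref{T:taut-rel1} then requires no new computation: by Lemma \ref{L:comp-taut} we have $\w{\Phi}_d^*(K_1)=K_{1,0}$ and $\w{\Phi}_d^*(\Lambda)=\Lambda(1,0)$, while Theorem \ref{bound-pdb}\eqref{bound2} gives $\w{\Phi}_d^*\O(\delta)=\O(\w{\delta})$; pulling Mumford's relation $K_1=\Lambda^{12}\otimes\O(-\delta)$ of Theorem \ref{T:Mumford-rela} back along $\w{\Phi}_d$ yields \eqref{T:taut-rel1} at once.

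Next I would dispose of \eqref{T:taut-rel2} directly from the definition of the Deligne pairing, with no Riemann--Roch. Unwinding \eqref{E:Deligne} gives
$$K_{0,1}=\langle \omega_{\pi},\L_d\rangle_{\pi}=\Lambda(1,1)\otimes\Lambda(1,0)^{-1}\otimes\Lambda(0,1)^{-1}\otimes\Lambda(0,0),$$
and the duality isomorphism of Fact \ref{F:prop-det}(ii) applied with $\F=\O$, namely $\Lambda(1,0)=d_{\pi}(\omega_{\pi})\cong d_{\pi}(\O)=\Lambda(0,0)$, cancels the two middle factors and leaves exactly $K_{0,1}=\Lambda(1,1)\otimes\Lambda(0,1)^{-1}$.

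The heart of the argument is Grothendieck--Riemann--Roch applied to the family $\pi\colon\univb\to\pdtb$ in the form valid for families of nodal curves (see \cite[Chap. 13]{ACG}). Writing $\F=\omega_{\pi}^{n}\otimes\L_d^{m}$, so that $c_1(\F)=n\,c_1(\omega_{\pi})+m\,c_1(\L_d)$, the degree-one part of GRR together with the node correction produces the master formula
$$\lambda(n,m)=\frac{1}{2}\pi_*\!\left(c_1(\F)^2-c_1(\F)\cdot c_1(\omega_{\pi})\right)+\lambda(1,0),$$
in which the entire singular (boundary) contribution has been absorbed into the $\F$-independent term $\lambda(1,0)=c_1(d_{\pi}(\O))$; this absorption is exactly the content of the already-established relation \eqref{T:taut-rel1}, which is why I sequence \eqref{T:taut-rel1} first. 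Expanding the push-forward via $\pi_*(c_1(\omega_{\pi})^2)=\kappa_{1,0}$, $\pi_*(c_1(\omega_{\pi})c_1(\L_d))=\kappa_{0,1}$ and $\pi_*(c_1(\L_d)^2)=\kappa_{-1,2}$ writes each $\lambda(n,m)$ linearly in $\kappa_{1,0},\kappa_{0,1},\kappa_{-1,2},\lambda(1,0)$. Specializing to $(n,m)=(0,1)$ and $(1,1)$ and solving the resulting $2\times 2$ linear system recovers $\kappa_{0,1}=\lambda(1,1)-\lambda(0,1)$ (re-proving \eqref{T:taut-rel2}) and $\kappa_{-1,2}=\lambda(1,1)+\lambda(0,1)-2\lambda(1,0)$, which is precisely \eqref{T:taut-rel3}. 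Substituting these, together with $\kappa_{1,0}=12\lambda(1,0)-c_1(\O(\w{\delta}))$, back into the master formula and collecting the coefficients of $\lambda(1,0)$, $\lambda(1,1)$, $\lambda(0,1)$ and $\w{\delta}$ yields the four exponents $6n^2-6n-m^2+1$, $mn+\binom{m}{2}$, $-mn+\binom{m+1}{2}$ and $-\binom{n}{2}$ of \eqref{T:taut-rel4}; this is a routine binomial bookkeeping that I will not reproduce.

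The main obstacle is getting the nodal GRR exactly right: the relative tangent sheaf of $\pi$ fails to be locally free along the nodes, so the Todd class carries a correction supported on the singular locus, and a naive use of the smooth formula would misplace the boundary term $\w{\delta}$. I sidestep recomputing this correction by isolating it in the universal term $\lambda(1,0)$ and pinning its value through Mumford's relation \eqref{T:taut-rel1}; the remaining push-forwards involve only the line bundles $\omega_{\pi}$ and $\L_d$, for which intersection theory on the smooth quotient stack $\pdtb$ (Fact \ref{Fact-Pic}) behaves formally as in the smooth case. A secondary point to verify is that GRR and the operational push-forward $\pi_*$ are available for the representable proper morphism $\pi$ of smooth Artin quotient stacks, which follows from the quotient presentations of \ref{des-quot-stacks}.
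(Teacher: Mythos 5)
Your proof is correct, and it reaches the same GRR-based master formula \eqref{E:basic-for} as the paper, but it assembles the ingredients in a genuinely different order that is worth recording. The paper proves all four relations in one pass: it expands $\Td(\Omega_{\pi})^{-1}$ through degree two, uses the geometric identity $\pi_*(c_2(\Omega_{\pi}))=\w{\delta}$ of \eqref{E:push-eta} to make the boundary term appear, obtains \eqref{T:taut-rel1} as the special case $(n,m)=(1,0)$, and only afterwards observes that \eqref{T:taut-rel1} is the pull-back of Mumford's relation. You invert this: you first establish \eqref{T:taut-rel1} by pulling back Theorem \ref{T:Mumford-rela} along $\w{\Phi}_d$ (legitimate, via Lemma \ref{L:comp-taut} and the scheme-theoretic equality $\w{\Phi}_d^*\O(\delta)=\O(\w{\delta})$ of Theorem \ref{bound-pdb}\eqref{bound2}), and then write GRR in the normalized form $\lambda(n,m)-\lambda(0,0)=\tfrac12\pi_*\bigl(c_1(\F)^2-c_1(\F)\cdot c_1(\omega_{\pi})\bigr)$, in which the degree-two Todd term cancels; together with $\lambda(0,0)=\lambda(1,0)$ (duality, Fact \ref{F:prop-det}(ii) with $\F=\O$) this reproduces \eqref{E:basic-for} without ever invoking $\pi_*(c_2(\Omega_{\pi}))=\w{\delta}$ directly --- that geometric input is instead imported from $\mgb$ through Theorem \ref{bound-pdb}\eqref{bound2}, which the paper has already proved by local computation. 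I have checked that your normalized formula agrees with \eqref{E:basic-for} and that the coefficient bookkeeping for \eqref{T:taut-rel4} comes out to $6n^2-6n-m^2+1$, $mn+\binom{m}{2}$, $-mn+\binom{m+1}{2}$ and $-\binom{n}{2}$ as claimed. Your direct derivation of \eqref{T:taut-rel2} from the definition \eqref{E:Deligne} of the Deligne pairing plus $d_{\pi}(\omega_{\pi})\cong d_{\pi}(\O)$ is a nice bonus: it gives a canonical isomorphism of line bundles rather than merely an equality of first Chern classes, and it serves as a consistency check on the GRR computation. The only expository caveat is that the ``absorption'' of the boundary contribution into $\lambda(1,0)$ is a formal consequence of subtracting the two GRR expansions and does not itself use \eqref{T:taut-rel1}; what \eqref{T:taut-rel1} supplies is the substitution $\kappa_{1,0}=12\lambda(1,0)-\w{\delta}$ needed only at the final step of \eqref{T:taut-rel4}. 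This is a phrasing issue, not a gap.
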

\begin{proof}
Since the first Chern class map $c_1:\Pic(\pdtb)\to A^1(\pdtb)$ is an isomorphism by Fact
\ref{Fact-Pic}\eqref{Fact-Pic1}, it is enough to prove the above relations in the rational Chow group
$A^1(\pdtb)\otimes \Q$.

Following the same strategy as in the proof of Mumford's relations among the tautological classes of $\M_g$
(see \cite[Chap. 13, Sec. 7]{ACG}), we apply the Grothendieck-Riemann-Roch Theorem
%for quotient stacks (see \cite{EGb})
to the morphism $\pi: \univb\to \pdtb$:
\begin{equation}
\label{GRR}
\ch\left(\pi_!\left(\omega_{\pi}^n \otimes \L_d^m\right)\right)=\pi_*\left(\ch(\omega_{\pi}^n \otimes \L_d^m)\cdot \Td(\Omega_{\pi})^{-1}\right),
\end{equation}
where $\ch$ denotes the Chern character, $\Td$ denotes the Todd class and $\Omega_{\pi}$ is the sheaf of relative K\"{a}hler differentials.

Using \eqref{E:Chern-det}, we can compute the degree one part of the left hand side of \eqref{GRR}:
\begin{equation}\label{LHS}
\ch\left(\pi_!\left(\omega_{\pi}^n \otimes \L_d^m\right)\right)_1=c_1\left(\pi_!\left(\omega_{\pi}^n \otimes \L_d^m\right)\right)=c_1\left(d_{\pi}\left(\omega_{\pi}^n \otimes \L_d^m\right)\right)=\lambda(n,m).
\end{equation}
Let us now compute the degree one part of the right hand side of (\ref{GRR}).
Note that, as proved in \cite[p. 383]{ACG},
we have that $c_1(\Omega_{\pi})=c_1(\omega_{\pi})$ and that $c_2(\Omega_{\pi})$ is the class of the nodal locus of the morphism $\pi$.
In particular, we have that
\begin{equation}\label{E:push-eta}
\pi_*(c_2(\Omega_{\pi}))=\w{\delta}\in A^1(\pdtb),
\end{equation}
where $\w{\delta}$ is the total boundary divisor \eqref{E:tot-div} of $\pdtb$.
The first three terms of the inverse of the Todd class of
$\Omega_{\pi}$ are equal to
{\small
\begin{equation}\label{Todd}
\Td(\Omega_{\pi})^{-1}=1-\frac{c_1(\Omega_{\pi})}{2}+\frac{c_1^2(\Omega_{\pi})+c_2(\Omega_{\pi})}{12} + \ldots=
1-\frac{c_1(\omega_{\pi})}{2}+\frac{c_1(\omega_{\pi})^2+c_2(\Omega_{\pi})}{12}+\ldots
\end{equation}}
Using the multiplicativity of the Chern character, we get

\begin{align}\label{ch1}
&\ch(\omega_{\pi}^n\otimes \L_d ^m)\\
&=\left( 1+c_1(\omega_{\pi})+\frac{c_1(\omega_{\pi})^2}{2}+\ldots  \right)^n
\cdot\left( 1+c_1(\L_d)+\frac{c_1(\L_d)^2}{2}+ \ldots  \right)^m\nonumber\\
&=\left( 1+ n c_1(\omega_{\pi})+\frac{n^2 c_1(\omega_{\pi})^2}{2}+ \ldots  \right)\cdot
\left( 1+m c_1(\L_d)+\frac{m^2 c_1(\L_d)^2}{2}+ \ldots  \right)\nonumber\\
&=1+[n c_1(\omega_{\pi})+ m c_1(\L_d)]+\nonumber\\
&\qquad\qquad\qquad\kern-2pt+\left[\frac{n^2c_1(\omega_{\pi})^2}{2}+nm c_1(\omega_{\pi})\cdot c_1(\L_d)+
\frac{m^2c_1(\L_d)^2}{2}   \kern-2pt\right] +\ldots\nonumber
\end{align}

Combining (\ref{Todd}) and (\ref{ch1}) and using \eqref{E:taut-Chern} together with \eqref{E:push-eta},
we can compute the degree one part of the right hand side of \eqref{GRR}
$$\displaylines{
\left[\pi_*\left(\ch(\omega_{\pi}^n \otimes \L_d^m)\cdot \Td(\Omega_{\pi})^{-1}\right)\right]_1=
\pi_*\left(\left[\ch(\omega_{\pi}^n \otimes  \L_d^m)\cdot \Td(\Omega_{\pi})^{-1}\right]_2\right)\hfill\cr
\hfill=\kern-2pt\pi_*\kern-3pt\left[\frac{6n^2-6n+1}{12}c_1(\omega_{\pi})^2+\frac{2nm-m}{2}c_1(\omega_{\pi})\cdot c_1(\L_d)
+\frac{m^2}{2}c_1(\L_d)^2+\frac{c_2(\Omega_{\pi})}{12}\kern-2pt\right]}$$
\begin{equation}\label{RHS}
=\frac{6n^2-6n+1}{12}\kappa_{1,0}+\frac{2nm-m}{2}\kappa_{0,1}
+\frac{m^2}{2}\kappa_{-1,2}+\frac{\w{\delta}}{12}.
\end{equation}
Putting together \eqref{LHS} and \eqref{RHS}, we get the relation
\begin{equation}\label{E:basic-for}
\lambda(n,m)=\frac{6n^2-6n+1}{12}\kappa_{1,0}+\frac{2nm-m}{2}\kappa_{0,1}
+\frac{m^2}{2}\kappa_{-1,2}+\frac{\w{\delta}}{12}.
\end{equation}
Formula \eqref{E:basic-for} for $n=1$ and $m=0$ gives that
\begin{equation*}\tag{*}
\lambda(1,0)=\frac{\kappa_{1,0}}{12}+\frac{\w{\delta}}{12},
\end{equation*}
which proves part \eqref{T:taut-rel1}. By substituting (*) into \eqref{E:basic-for}, we get
\begin{equation}\label{E:basic-for2}
\lambda(n,m)=(6n^2-6n+1)\lambda(1,0)+\frac{2nm-m}{2}\kappa_{0,1}+
\frac{m^2}{2}\kappa_{-1,2}-\binom{n}{2}\w{\delta}.
\end{equation}
Formula \eqref{E:basic-for2} for $(n,m)=(0,1)$ and $(n,m)=(1,1)$ gives that
\begin{equation*}\tag{**}
\begin{sis}
& \lambda(0,1)=\lambda(1,0)-\frac{\kappa_{0,1}}{2}+\frac{\kappa_{-1,2}}{2},\\
&  \lambda(1,1)=\lambda(1,0)+\frac{\kappa_{0,1}}{2}+\frac{\kappa_{-1,2}}{2}, \\
\end{sis}
\end{equation*}
The system of equations (**) is equivalent to the system
\begin{equation*}\tag{***}
\begin{sis}
& \kappa_{0,1}= \lambda(1,1)-\lambda(0,1),\\
& \kappa_{-1,2}=-2\lambda(1,0)+\lambda(0,1)+\lambda(1,1),
\end{sis}
\end{equation*}
which also proves parts \eqref{T:taut-rel2} and \eqref{T:taut-rel3}. Substituting (***) into \eqref{E:basic-for2},
we get the following relation
\begin{multline}\label{E:basic-for3}
\lambda(n,m)=(6n^2-6n+1-m^2)\lambda(1,0)+\\
+\left[-mn+\binom{m+1}{2}\right]\lambda(0,1)+
\left[mn+\binom{m}{2}\right]\lambda(1,1)-\binom{n}{2}\w{\delta},
\end{multline}
%$$\left[mn+\binom{m}{2}\right]\lambda(1,1)-\binom{n}{2}\w{\delta},$$
which proves part \eqref{T:taut-rel4}.
\end{proof}
By a slight generalization of Lemma \ref{L:comp-taut}, it is easy to see that the relations in Theorem \ref{T:taut-rel}\eqref{T:taut-rel1}
and in Theorem \ref{T:taut-rel}\eqref{T:taut-rel4} with $m=0$ are the pull-back to $\pdtb$ of Mumford's relations
among the tautological classes of $\mgb$ (see  \cite[Chap. 13, Thm. (7.6)]{ACG}).

\begin{remark}\label{R:rela-int}
The proof of Theorem \ref{T:taut-rel} works a priori only  in the rational Picard group of $\pdtb$, since it uses the Grothedieck-Riemann-Roch theorem which is valid only in the rational Chow group.  However, since the Picard group of $\pdtb$ is torsion-free (as it follows from Theorem \ref{T:MainThmA}\eqref{T:MainThmA2}, to be proved in \S\ref{S:Pic-rigid}), the relations in the above Theorem holds true a posteriori  also in the integral Picard group of $\pdtb$.
\end{remark}

Motivated by Theorem \ref{T:taut-rel}, we can now define the tautological subgroup of the Picard group of the stacks $\pdtb$ and $\pdt$.

\begin{defi}\label{D:taut-Pic}
The {\em tautological} subgroup $\Pict(\pdtb)\subseteq \Pic(\pdtb)$  is the subgroup
generated by the line bundles associated to the boundary divisors of $\pdtb$ (see Section \ref{S:bound-div}) and
by the tautological line bundles $\Lambda(1,0)$, $\Lambda(0,1)$ and $\Lambda(1,1)$.

The image of $\Pict(\pdtb)\subseteq \Pic(\pdtb)$ via the natural restriction map $\Pic(\pdtb)\to \Pic(\pdt)$
is defined to be $\Pict(\pdt)$; hence, $\Pict(\pdt)\subseteq \Pic(\pdt)$ is the subgroup generated by
the tautological line bundles $\Lambda(1,0)$, $\Lambda(0,1)$ and $\Lambda(1,1)$.
\end{defi}

%As a direct consequence of Theorem \ref{T:taut-rel}, we get a set of generators for the tautological subgroup of the Picard group of $\pdtb$ and of $\pdt$.

%\begin{cor}\label{C:taut-Pic}
%\noindent
%\begin{enumerate}[(i)]
%\item The tautological subgroup $\Pict(\pdtb)\subseteq \Pic(\pdtb)$ is generated by the boundary divisors and the tautological line bundles $\Lambda(1,0)$, $\Lambda(0,1)$ and $\Lambda(1,1)$.
%\item The tautological subgroup $\Pict(\pdt)\subseteq \Pic(\pdt)$ is generated by the tautological line bundles $\Lambda(1,0)$, $\Lambda(0,1)$ and $\Lambda(1,1)$.
%\end{enumerate}
%\end{cor}

\section{Comparing the Picard groups of $\pdt$ and $\pd$}\label{S:comp-Pic}
%\section{On the gerbe $\nu_d:\pdt\to \pd$}

%The aim of this section is to
%to study the pull-back map $\nu_d^*:\Pic(\pd)\to \Pic(\pdt)$.

The aim of this Section is to study the pull-back map
$$\nu_d^*:\Pic(\pd)\to \Pic(\pdt)$$
induced by the map $\nu_d:\pdt\to \pd$ (see Section \ref{desc-stacks}). To this aim,
consider the Leray spectral sequence for the \'etale sheaf $\Gm$ with respect to the map $\nu_d$:
$$E_2^{p,q}=H^p_{\rm{\acute et}}(\pd,(R^q\nu_d)_*\Gm)\Longrightarrow H^{p+q}_{\rm{\acute et}}(\pdt,\Gm).$$
The first terms of the above spectral sequence give rise to the exact sequence
$$0\to H^1_{\rm{\acute et}}(\pd,(R^0\nu_d)_*\Gm)\stackrel{}{\longrightarrow} H^1_{\rm{\acute et}}(\pdt,\Gm)
\stackrel{}{\longrightarrow} H^0_{\rm{\acute et}}(\pd, (R^1\nu_d)_*\Gm) \stackrel{}{\longrightarrow}
H^2_{\rm{\acute et}}(\pd, (R^0\nu_d)_*\Gm).$$
Since $\nu_d$ is a $\Gm$-gerbe, we have that $(R^0\nu_d)_*\Gm=\Gm$ and $(R^1\nu_d)_*\Gm=\Pic B\Gm$,
where $\Pic B\Gm$ is canonically identified with the group $(\Gm)^*\cong \Z$ of characters of $\Gm$.
By plugging these isomorphisms into the above long exact sequence, we get the exact sequence
\begin{equation}\label{succ-Pic}
 0\to \Pic(\pd)\stackrel{\nu_d^*}{\longrightarrow} \Pic(\pdt)
\stackrel{\res}{\longrightarrow} \Z
% \Pic B\Gm=(\Gm)^*
\stackrel{\obs}{\longrightarrow}  \Br(\pd),
\end{equation}
where the above maps admits the following interpretation (which one can easily check
via standard cocycle computations): $\nu_d^*$ is the pull-back map induced by $\nu_d$;
$\res$ is the restriction to the fibers of $\nu_d$ (it coincides with the weight map defined in
\cite[Def. 4.1]{Hof1} and with the character appearing in the decomposition in \cite[Prop. 3.1.1.4]{Lie})
%, i.e. it  sends a line bundle $\L$ on $\pdt$ onto the character of $\Gm$
%with which $\Gm$ acts on $\L$;
and $\obs$ (the obstruction map) sends $1\in \Z=(\Gm)^*$ into the class $[\nu_d]$ of the
$\Gm$-gerbe $\nu_d$ in the (cohomological) Brauer group  $\Br(\pd):=H^2_{\rm{\acute et}}(\pd, \Gm)$
(see \cite[Chap. IV.3]{Gir}).

Since $\nu_d^*$ is injective, we can define a tautological subgroup of $\Pic(\pd)$ by intersecting
$\Pic(\pd)$ (which we identify with its image via $\nu_d^*$) with the tautological subgroup $\Pict(\pdt)$, as follows.

\begin{defi}\label{D:taut-rigid}
The tautological subgroup of $\Pic(\pd)$ is defined as
$$ \Pict(\pd):=\Pict(\pdt)\cap \Pic(\pd)\subseteq \Pic(\pdt).$$
\end{defi}

In order to compute generators for $\Pict(\pd)$, we
need first to compute the map ${\rm res}$ from \eqref{succ-Pic} on the generators of $\Pict(\pdt)$.

\begin{lemma}\label{exis-linebun}
We have that
$$\begin{sis}
& {\rm res}(\Lambda(1,0))=0,\\
& {\rm res}(\Lambda(0,1))=d-g+1,\\
& {\rm res}(\Lambda(1,1))=d+g-1.\\
\end{sis}
$$
\end{lemma}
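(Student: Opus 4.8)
The plan is to interpret $\res$ as a $\Gm$-weight computation and to evaluate it on each of the three line bundles via Riemann--Roch. Recall from the exact sequence \eqref{succ-Pic} that $\res$ restricts a line bundle on $\pdt$ to a fiber of the gerbe $\nu_d\colon\pdt\to\pd$, which is a copy of $B\Gm$, and records the resulting class in $\Pic(B\Gm)=\Hom(\Gm,\Gm)\cong\Z$; concretely, $\res(M)$ is the integer $w$ such that the subgroup $\Gm\subseteq\Aut(C,L)$ acts on the fiber of $M$ at the point $(C,L)$ through the character $\lambda\mapsto\lambda^{w}$. Since $\pd$ is irreducible this weight is independent of the chosen point, so I may work at a general $(C,L)\in\pdt(k)$; for $g\geq 3$ a general smooth curve $C$ has no nontrivial automorphisms, whence $\Aut(C,L)=\Gm$ and the fiber of $\nu_d$ through $(C,L)$ is genuinely $B\Gm$.

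First I would record the weights of the two building blocks. By definition $\lambda\in\Gm$ acts on the universal line bundle $\L_d$ by multiplication by $\lambda$, hence with weight $1$, while it acts trivially on the relative dualizing sheaf $\omega_\pi$, which is intrinsic to the curve and does not involve $L$, hence with weight $0$. Therefore $\Gm$ acts on $\F:=\omega_\pi^{\,n}\otimes\L_d^{\,m}$ with weight $m$.

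The next step is to propagate this weight through the determinant of cohomology. At the point $(C,L)$ the fiber of $\Lambda(n,m)=d_\pi(\F)$ is canonically $\det H^0(C,\F_C)\otimes\det H^1(C,\F_C)^{-1}$, where $\F_C=\omega_C^{\otimes n}\otimes L^{\otimes m}$ is the restriction of $\F$ to the fiber. Since $\Gm$ acts on $\F_C$, and hence $\Gm$-equivariantly on each $H^i(C,\F_C)$, with weight $m$, it acts with weight $m\,h^0$ on $\det H^0$ and with weight $m\,h^1$ on $\det H^1$. Hence $\Gm$ acts on the fiber of $\Lambda(n,m)$ with weight
$$m\,\bigl(h^0(C,\F_C)-h^1(C,\F_C)\bigr)=m\cdot\chi\bigl(C,\omega_C^{\otimes n}\otimes L^{\otimes m}\bigr),$$
that is, $\res(\Lambda(n,m))=m\cdot\chi(C,\omega_C^{\otimes n}\otimes L^{\otimes m})$.

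Finally I would compute the Euler characteristic by Riemann--Roch. Since $\deg(\omega_C^{\otimes n}\otimes L^{\otimes m})=n(2g-2)+md$, one gets $\chi(C,\omega_C^{\otimes n}\otimes L^{\otimes m})=n(2g-2)+md+1-g$, so that
$$\res(\Lambda(n,m))=m\bigl[n(2g-2)+md+1-g\bigr].$$
Setting $(n,m)=(1,0)$ gives $0$; setting $(n,m)=(0,1)$ gives $d-g+1$; and setting $(n,m)=(1,1)$ gives $(2g-2)+d+1-g=d+g-1$, as asserted. The only delicate point is the careful bookkeeping of the $\Gm$-weight through the determinant of cohomology and the matching of the sign convention in the identification $\Pic(B\Gm)\cong\Z$ (so that $\L_d$ has weight $+1$); once these are fixed, the remaining content is the elementary Riemann--Roch computation above.
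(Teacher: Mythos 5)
Your proof is correct and follows essentially the same route as the paper: identify $\res$ with the $\Gm$-weight on the fiber of the determinant of cohomology at a point $(C,L)$, observe that $\Gm$ acts with weight $m$ on $\omega_\pi^n\otimes\L_d^m$ and hence with weight $m\cdot\chi(C,\omega_C^{\otimes n}\otimes L^{\otimes m})$ on $\det H^0\otimes\det^{-1}H^1$, and finish by Riemann--Roch. The only cosmetic difference is that you carry out the computation uniformly in $(n,m)$ before specializing, whereas the paper treats the three cases separately.
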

\begin{proof}
Using the functoriality of the determinant of cohomology, we get that the fiber of $\Lambda(1,0)=d_{\pi}(\omega_{\pi})$ over a point $(C,L)\in \pdt$ is canonically isomorphic to
$\det H^0(C,\omega_C)\otimes \det^{-1}  H^1(C,\omega_C)$. Since $\Gm$ acts trivially on $H^0(C,\omega_C)$ and on
$H^1(C, \omega_C)$, we get that ${\rm res}(\Lambda(1,0))=0$.

Similarly, the fiber of $\Lambda(0,1)$ over a point $(C,L)\in \pdt$ is canonically isomorphic to
$\det H^0(C,L)\otimes \det^{-1}  H^1(C,L)$. Since $\Gm$ acts with weight one on the vector spaces
$H^0(C, L)$ and $H^1(C,L)$, Riemann-Roch gives that
$${\rm res}(\Lambda(0,1))=\dim H^0(C,L)-\dim H^1(C,L)=\chi(C, L)=d+1-g.$$
Finally, the fiber of $\Lambda(1,1)$ over a point $(C,L)\in \pdt$ is canonically isomorphic
to $\det H^0(C,L\otimes \omega_C)\otimes \det^{-1}  H^1(C,L\otimes \omega_C)$. Since $\Gm$ acts with weight
one on the vector spaces $H^0(C, \omega_C\otimes L)$ and $H^1(C,\omega_C\otimes L)$, Riemann-Roch gives that
$${\rm res}(\Lambda(1,1))=\dim H^0(C,\omega_C\otimes L)-\dim H^1(C,\omega_C\otimes L)=
\chi(C, \omega_C\otimes L)= d+2g-2+1-g=d-1+g. $$
%$$=d+2g-2+1-g=d-1+g.$$
\end{proof}

Combining the above Lemma \ref{exis-linebun} with Corollary \ref{C:taut-Pic}, we get the following

\begin{cor}\label{C:res-taut}
\noindent
\begin{enumerate}[(i)]
\item \label{C:res-taut1} The image of $\Pict(\pdt)$ via the map ${\rm res}$ of \eqref{succ-Pic}
is the subgroup generated by $(d+g-1,d-g+1)=(d+g-1,2g-2)$.
\item \label{C:res-taut2} $\Pict(\pd)$ is generated by $\Lambda(1,0)$ and
\begin{equation}\label{D:defi-Xi}
\Xi:= \Lambda(0,1)^{ \frac{d+g-1}{(d+g-1,d-g+1)}}\otimes \Lambda(1,1)^{-\frac{d-g+1}{(d+g-1,d-g+1)}}.
\end{equation}
\end{enumerate}
\end{cor}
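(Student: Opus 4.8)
The plan is to deduce both parts formally from the two results already in place: the generation statement Corollary \ref{C:taut-Pic}, which says that $\Pict(\pdt)$ is generated by $\Lambda(1,0)$, $\Lambda(0,1)$ and $\Lambda(1,1)$, and the weight computation of Lemma \ref{exis-linebun}. Since ${\rm res}$ is a group homomorphism, the image ${\rm res}(\Pict(\pdt))$ is the subgroup of $\Z$ generated by the images of these three generators, namely by $0$, $d-g+1$ and $d+g-1$. This subgroup is exactly $(d-g+1,d+g-1)\cdot \Z$, and the identity $(d-g+1,d+g-1)=(d+g-1,2g-2)$ recorded in the Notations (it follows from $(d+g-1)-(d-g+1)=2g-2$) yields part \eqref{C:res-taut1}.

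For part \eqref{C:res-taut2} I would first observe that the exactness of the sequence \eqref{succ-Pic} identifies $\nu_d^*\Pic(\pd)$ with $\ker({\rm res})$ inside $\Pic(\pdt)$. Hence, by Definition \ref{D:taut-rigid},
$$\Pict(\pd)=\Pict(\pdt)\cap \ker({\rm res})=\ker\left({\rm res}|_{\Pict(\pdt)}\right).$$
A general element of $\Pict(\pdt)$ is of the form $\Lambda(1,0)^a\otimes \Lambda(0,1)^b\otimes \Lambda(1,1)^c$, and by Lemma \ref{exis-linebun} it lies in this kernel if and only if $b(d-g+1)+c(d+g-1)=0$.

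The one remaining step is the elementary number theory of this single linear condition. Setting $e:=(d-g+1,d+g-1)$, $p:=(d-g+1)/e$ and $q:=(d+g-1)/e$, so that $(p,q)=1$, the relation $bp=-cq$ forces $q\mid b$, whence $b=qt$ and $c=-pt$ for some $t\in\Z$. Consequently every such element equals $\Lambda(1,0)^a\otimes \Xi^t$, where $\Xi=\Lambda(0,1)^q\otimes \Lambda(1,1)^{-p}$ is precisely the line bundle of \eqref{D:defi-Xi}. Conversely, $\Lambda(1,0)$ (with ${\rm res}=0$) and $\Xi$ both lie in $\ker({\rm res})$, so $\Pict(\pd)$ is generated by $\Lambda(1,0)$ and $\Xi$, which proves \eqref{C:res-taut2}.

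I do not expect a genuine obstacle here: once Corollary \ref{C:taut-Pic} and Lemma \ref{exis-linebun} are granted, the statement is a formal consequence of the exactness of \eqref{succ-Pic} together with the solution of one linear Diophantine equation. The only point that needs mild care is the bookkeeping between the two equivalent forms of the relevant greatest common divisor, which is handled by the identity recorded in the Notations.
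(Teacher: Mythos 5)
Your proof is correct and follows exactly the route the paper intends: the corollary is stated there as an immediate consequence of Lemma \ref{exis-linebun} and Corollary \ref{C:taut-Pic} with no written proof, and your argument (image of a homomorphism generated by the images of generators for part (i); identification of $\Pict(\pd)$ with $\ker({\rm res}|_{\Pict(\pdt)})$ plus the elementary Diophantine solution for part (ii)) supplies precisely the omitted details.
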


%We now compute the order of $[\nu_d]$ in the Brauer group $\Br(\pd)$.
Corollary \ref{C:res-taut}\eqref{C:res-taut1} combined with the exact sequence (\ref{succ-Pic}) gives that
the order of $[\nu_d]$ in the Brauer group $\Br(\pd)$ divides
$(d+g-1,2g-2)$. Indeed the following is true:

%The main result of the present section is the following theorem which yields the order of $[\nu_d]$ in the Brauer group %$\Br(\pd)$.

%Indeed, our proof will also compute the index ${\rm Ind}([\nu_d])$ of $[\nu_d]$, i.e. the smallest $m\in \N$ such that
%$[\nu_d]$ is represented by a projective bundle over $\pd$ of relative dimension $m-1$.

%According to the general formalism of \cite{Gir}, the $\Gm$-gerbe $\nu_d$ gives rise to an element
%$[\nu_d]$ in the Brauer group $\Br(\pd):= H^2_{\rm{\acute et}}(\pd, \Gm)$ of $\pd$. We have the following result for the order of $[\nu_d]$ in $\Br(\pd)$.
%The following result will allow us to compare the Picard groups of $\pd$ and of $\pdt$.
% The aim of the present section is to prove the following
%Our first result is the following

\begin{thm}\label{order-gerbe}
 The order of $[\nu_d]$ in $\Br(\pd)$ is equal to
% \begin{equation}\label{mdg-number}
%{\rm Per}([\mu_d])={\rm Ind}([\mu_d])
$(d+1-g, 2g-2)$.
%\end{equation}
\end{thm}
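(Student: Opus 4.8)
The plan is to extract the order of $[\nu_d]$ purely formally from the exactness of \eqref{succ-Pic}, and then to pin down its precise value by reinterpreting ``line bundles of prescribed weight on the gerbe'' as Poincar\'e-type line bundles on the universal family, where Kouvidakis' computation applies.

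First I would record what exactness gives. Since $\obs\colon \Z\to \Br(\pd)$ is the homomorphism sending $1\mapsto [\nu_d]$, the order of $[\nu_d]$ is the least positive integer $m$ with $\obs(m)=m[\nu_d]=0$. By exactness of \eqref{succ-Pic} we have $\ker(\obs)=\Im(\res)$, and $\Im(\res)=\res(\Pic(\pdt))$ is a subgroup of $\Z$; hence the order of $[\nu_d]$ is exactly the non-negative generator of $\res(\Pic(\pdt))$, i.e. $\min\{m>0 : m\in \res(\Pic(\pdt))\}$. Now Corollary \ref{C:res-taut}\eqref{C:res-taut1} shows $(d+1-g,2g-2)\in \res(\Pict(\pdt))\subseteq \res(\Pic(\pdt))$, so the order of $[\nu_d]$ divides $(d+1-g,2g-2)$; this is the divisibility already noted before the statement. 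The entire content of the theorem is therefore the reverse inequality, namely that no line bundle on $\pdt$ can have weight strictly between $0$ and $(d+1-g,2g-2)$, equivalently that $\res(\Pic(\pdt))\subseteq (d+1-g,2g-2)\,\Z$.

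To prove this I would translate the weight condition into the existence of a Poincar\'e-type line bundle. Given $M\in \Pic(\pdt)$ with $\res(M)=m$, the line bundle $\L_d^{\otimes m}$ on the universal family carries $\Gm$-weight $m$, so the twist $\L_d^{\otimes m}\otimes (\text{pullback of } M^{-1})$ is $\Gm$-invariant and descends to a line bundle on $\univ$ whose fiberwise restriction is, by definition, an $m$-Poincar\'e line bundle on the universal family $\univ\to\pd$; conversely an $m$-Poincar\'e line bundle produces an element of weight $m$. This equivalence is the statement of Proposition \ref{Poinc-order}, which thereby identifies the order of $[\nu_d]$ with the least $m>0$ admitting an $m$-Poincar\'e line bundle. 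It then remains to invoke Kouvidakis' result \cite[p. 514]{kou2}, which shows that the smallest $m$ for which such a line bundle exists on the degree-$d$ universal Jacobian is precisely $(d+1-g,2g-2)$; combined with the divisibility above this yields the asserted equality, and, as a byproduct together with Proposition \ref{Poinc-order}, recovers the Mestrano--Ramanan criterion in the case $(d+1-g,2g-2)=1$. I expect the genuine obstacle to be exactly this last bridge: setting up Definition \ref{m-Poincare} so that the weight-$m$ condition on the $B\Gm$-fibers of $\nu_d$ matches the hypotheses of Kouvidakis' numerical non-existence statement, while the manipulation of \eqref{succ-Pic} itself is formal.
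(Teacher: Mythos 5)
Your proposal is correct and follows essentially the same route as the paper: both reduce the order of $[\nu_d]$ to the least $m>0$ admitting an $m$-Poincar\'e line bundle (Proposition \ref{Poinc-order}) and then quote Kouvidakis \cite[p.~514]{kou2}, with the upper bound $(d+1-g,2g-2)$ coming from the tautological classes via Corollary \ref{C:res-taut}. The only (minor) difference is that you justify Proposition \ref{Poinc-order} by combining the exact sequence \eqref{succ-Pic} with descent of the weight-zero bundle $\L_d^{\otimes m}\otimes \pi^*M^{-1}$ to $\u$, whereas the paper argues via sections of the auxiliary $\Gm$-gerbes $\pdt^m\to\pd$; both are standard and equivalent gerbe-theoretic mechanisms.
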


In order to prove the theorem, we will reinterpret the order of  $[\nu_d]$ in terms of the existence of
a (generalized) Poincar\'e bundle.

Consider the universal family $\pi:\univ\to \pdt$.
% introduced in the proof of Lemma \ref{exis-linebun}.
%The fiber of $\univ$ over a scheme $S$ consists of the groupoid
%whose objects are triples $(\X\to S, \sigma, L)$, where $(\X\to S, L)\in \pdt(S)$ and $\sigma:S\to \X$ is a section of the family $\X\to S$, and whose
%isomorphisms are the obvious ones.
The $\Gm$-rigidification of $\univ$, denoted by $\u:=\univ\fatslash \Gm$, has a natural map
$\w{\pi}:\u\to \pd$ which  is indeed the universal family over $\pd$. However, the universal (or Poincar\'e)
line bundle $\L_d$ on $\univ$ does not necessarily descend to a line bundle on $\u$.
Instead, it turns out that there always exists on $\u$ an $m$-Poincar\'e line bundle as in the definition below.

%The morphism $u_g$ is representable and the fiber
%of $u_g$ over a geometric point $(C,L)$ of $\pdb$ is equal to $C$.

%Denote by $u_g:\cgb\to \mgb$ the universal family over $\mgb$. Consider the following Cartesian diagram:
%(in which all the maps are representable):
%\begin{equation}\label{diag-univ}
%\xymatrix{
%\cgb\times_{\mgb} \pdb \ar^{u_g'}[r] \ar[d]^{\Phi_d'}& \pdb \ar[d]^{\Phi_d}\\
%\cgb \ar[r]^{u_g} & \mgb\
%}\end{equation}

\begin{defi}\label{m-Poincare}
Let $m\in \Z$. An $m$-Poincar\'e line bundle for $\pd$ is a line bundle $\L$ on $\u$ such that
the restriction of $\L$ to the fiber ${\w{\pi}}^{-1}(C,L)\cong C$ over a  geometric point
$(C,L)$ of $\pd$ is isomorphic to $L^m$.
\end{defi}

The above definition generalizes the classical definition of  Poincar\'e line bundle,
which corresponds to the case $m=1$.
%The order of $[\nu_d]$ in the Brauer group $\Br(\pd)$ admits the following interpretation
%in terms of the existence of an $m$-Poincar\'e line bundle.

\begin{prop}\label{Poinc-order}
The order of $[\nu_d]$ in the group  $\Br(\pd)$ is equal to the smallest number
$m\in \N$ such that there exists an $m$-Poincar\'e line bundle for $\pd$.
\end{prop}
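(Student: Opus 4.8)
The plan is to translate the existence of an $m$-Poincar\'e line bundle into a statement about the weight (i.e.\ the image under $\res$) of line bundles on $\pdt$, and then to read off the answer from the exact sequence \eqref{succ-Pic}. The starting point is that the universal families over $\pdt$ and over $\pd$ sit in a commutative square
$$\xymatrix{
\univ \ar[r]^{\mu} \ar[d]_{\pi} & \u \ar[d]^{\w{\pi}} \\
\pdt \ar[r]^{\nu_d} & \pd
}$$
which is Cartesian, the rigidification map $\mu:\univ\to \u$ being the pull-back of the $\Gm$-gerbe $\nu_d$ along $\w{\pi}$. With respect to $\mu$, the Picard group $\Pic(\univ)$ carries a weight (or $\res$) homomorphism to $\Z$; the pull-back $\mu^*:\Pic(\u)\to \Pic(\univ)$ is injective with image exactly the weight-zero part (a standard property of $\Gm$-gerbes); and the universal line bundle $\L_d$ has weight $1$, since the band $\Gm$ acts on it by scalar multiplication. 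Because $\pi$ is $\Gm$-equivariant, the weight of $\pi^*M$ equals $\res(M)$ for every $M\in \Pic(\pdt)$.

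First I would prove that an $m$-Poincar\'e line bundle for $\pd$ exists if and only if $m$ lies in the image of $\res:\Pic(\pdt)\to \Z$. For the easy direction, given $M\in \Pic(\pdt)$ with $\res(M)=-m$, the line bundle $\L_d^m\otimes \pi^*M$ on $\univ$ has weight $m+\res(M)=0$, hence descends to a line bundle $\L$ on $\u$; as $\pi^*M$ restricts trivially to every fibre of $\pi$, the restriction of $\L$ to $\w{\pi}^{-1}(C,L)\cong C$ is $L^m$, so $\L$ is an $m$-Poincar\'e bundle. Conversely, given an $m$-Poincar\'e bundle $\L$ on $\u$, the line bundle $\mu^*\L\otimes \L_d^{-m}$ on $\univ$ restricts to $\O_C$ on every geometric fibre of $\pi$; by the seesaw principle, using that $\pi$ is proper and flat with geometrically connected nodal fibres (so $\pi_*\O=\O$), it equals $\pi^*M$ for a unique $M\in \Pic(\pdt)$. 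Comparing weights, and using that $\mu^*\L$ has weight $0$, gives $\res(M)=-m$, so $m\in \res(\Pic(\pdt))$.

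Finally I would invoke the exact sequence \eqref{succ-Pic}. Since $\obs(k)=k\cdot[\nu_d]$, its kernel is precisely $\operatorname{ord}([\nu_d])\cdot \Z$, and by exactness this kernel coincides with $\res(\Pic(\pdt))$. Thus the set of integers $m$ for which an $m$-Poincar\'e line bundle exists is exactly $\operatorname{ord}([\nu_d])\cdot \Z$ (note that $m=0$ always works, via $\L=\O_{\u}$), whose smallest positive element is $\operatorname{ord}([\nu_d])$; this is the asserted equality.

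The step requiring the most care is the weight bookkeeping on the gerbe $\mu$: namely the facts that the weight-zero line bundles on $\univ$ are exactly the pull-backs from $\u$, that $\res$ is additive and compatible with $\pi^*$, and that $\L_d$ has weight $1$. Once these are in place, the seesaw identification of fibrewise-trivial bundles with pull-backs from $\pdt$ and the final appeal to \eqref{succ-Pic} are routine.
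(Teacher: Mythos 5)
Your proof is correct, but it takes a genuinely different route from the paper's. The paper introduces, for each $m$, an auxiliary stack $\pdt^m$ whose associated $\Gm$-gerbe over $\pd$ has class $m[\nu_d]$, together with its universal family $\univ^m$ carrying the universal line bundle $\NN_m$ with fibres $\sigma^*(\L^m)$; it then identifies the order of $[\nu_d]$ with the smallest $m$ for which $\nu_d^m$ admits a section, and converts sections of the base-changed gerbe $\nu_d'^m$ into $m$-Poincar\'e bundles via $\NN_m$. You instead stay with the single gerbe $\mu:\univ\to\u$ (the base change of $\nu_d$ along $\w{\pi}$, which is the Cartesian square \eqref{fam-univ} for $m=1$), and characterize $m$-Poincar\'e bundles as descents of $\L_d^m\otimes\pi^*M$ with $\res(M)=-m$, so that their existence is equivalent to $m\in\res(\Pic(\pdt))$; the conclusion then drops out of \eqref{succ-Pic}, since $\res(\Pic(\pdt))=\ker(\obs)=\operatorname{ord}([\nu_d])\cdot\Z$. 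What your approach buys is economy: no auxiliary moduli stacks, everything reduced to the already-established exact sequence plus a seesaw descent on the universal curve. What it costs is the weight bookkeeping on $\Pic(\univ)$ — you need the Leray sequence for $\mu$ as well as for $\nu_d$ to know that weight-zero bundles are exactly pull-backs from $\u$, and the fact that $\L_d$ has weight one (which the paper also uses implicitly in Lemma \ref{exis-linebun}). The paper's construction is more explicitly geometric about what $m[\nu_d]$ is as a gerbe, which is perhaps why it was chosen, but both arguments hinge on the same Cartesian square and are equally rigorous.
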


\begin{proof}
In order to prove the statement,
we need to introduce some auxiliary stacks.
Given $m\in \Z$, consider the stack $\pdt^m$ whose fiber $\pdt^m(S)$ over a scheme $S$ consists of families $\C\to S$ of smooth curves of genus $g$
endowed with a line bundle $\L$ of relative degree $d$ and whose morphisms between two objects
$(\C'\to S', \L')$ and $(\C\to S, \L)$  are given by a triple $(g, \phi, \eta)$ where
$$\xymatrix{
\C'\ar[r]^{\phi}\ar[d] \ar@{}[dr]|{\square} & \C \ar[d] \\
S'\ar[r]^g & S
}
$$
is a Cartesian diagram and $\eta: \L'^m\to \phi^*(\L^m)$ is an isomorphism of line bundles on $\C'$. Note that
$\pdt^1\cong \pdt$.

%fiber  over a scheme $S$ is the groupoid $\pdtb^m(S)$ whose objects
%are the same as the objects of $\pdtb(S)$, i.e. families  of quasistable curves $\C\to S$ of genus $g$ endowed with
%a properly balanced line bundle $\L$ of relative degree $d$, and whose morphisms between two objects $(\C\to S, \L)$ and
%$(\C'\to S, \L')$ of $\pdtb^m(S)$   are given by pairs $(\phi, \eta)$, where $\phi:\C'\to \C$ is an isomorphism over $S$
%and $\eta: \L'^m\to \phi^*(\L^m)$ is an isomorphisms of line bundles on $\C'$.
The multiplicative group $\Gm$ injects into the automorphism group of every object $(\C\to S, \L)\in \pdt^m(S)$
as multiplication by scalars on $\L$. The rigidification $\pdt^m\fatslash \Gm$ is isomorphic to $\pd$ and
the natural map $\nu_d^m:\pdt^m\to \pd$ is a $\Gm$-gerbe. By construction, the class of $[\nu_d^m]$ in
$\Br(\pd)$ is equal to $[\nu_d^m]=m\cdot [\nu_d]$.
%Therefore,  the order of $[\nu_d]$ is equal to the smallest $m\in \N$ such that  the $\Gm$-gerbe $\nu_d^m:\pdtb^m\to \pdb$
%is trivial and this happens precisely when there exists a section $\sigma_d^m:\pdb\to \pdtb^m$ of $\nu_d^m$.

%In order to see when this happens, we need to introduce
Consider the  universal family $\pi^m:\univ^m \to \pdt^m$.
The fiber of $\univ^m$ over a scheme $S$
consists of the triples $(\C\to S, \sigma, \L)$, where $(\C\to S, \L)\in \pdt(S)$ and $\sigma$ is a section of the morphism
$\C\to S$. The morphisms between two objects
$(\C'\to S', \sigma', \L')\in \univ^m(S')$ and $(\C\to S, \sigma, \L)\in \univ^m(S)$  are given by the isomorphisms
$(g, \phi, \eta)$ as above satisfying the relation $\sigma\circ g=\phi\circ \sigma' $.
%Note that $\univb^1\cong \univb$.
The $\Gm$-rigidification of $\univ^m$ is isomorphic to $\u$ and therefore we get a Cartesian diagram:
\begin{equation}\label{fam-univ}
\xymatrix{
\univ^m \ar[r]^{\pi^m}\ar[d]_{\nu_d'^m} \ar@{}[dr]|{\square} & \pdt^m\ar[d]^{\nu_d^m} \\
\u\ar[r]^{\w{\pi}} & \pd
}
\end{equation}
%between the objects $(\C\to S,  \L)\in \pdtb^m(S)$ and $(\C'\to S',  \L')\in \pdtb^m(S')$ such that
On the stack $\univ^m$ there is a universal line bundle $\NN_m$, defined as follows:
%(see \ref{Pic-stack}):
to every morphism from a scheme $f:S\to \univ^m$, which corresponds to an object $(\C\to S, \sigma, \L)\in \univ^m(S)$
as above, we associate the line bundle $\NN_m(f):=\sigma^*(\L^m)\in \Pic(S)$;
to every morphism $S'\stackrel{g}{\to} S\stackrel{f}{\to}\univ^m$, corresponding to the morphism $(g, \phi, \eta)$
between two objects  $(\C\to S, \sigma, \L)$ and $(\C'\to S', \sigma', \L')$ as above, we associate the isomorphism
$$\NN_m(f\circ g)=\sigma'^*(\L'^m)\stackrel{\sigma'^*(\eta)}{\longrightarrow} \sigma'^*\phi^*(\L^m)=g^*\sigma^*(\L^m)=g^*\NN_m(f).
$$
We have now the tools that we need to prove the result.
%With these preliminaries, we can now  prove the theorem.
Since $[\nu_d^m]=m[\nu_d]\in \Br(\pd)$, the period of $[\nu_d]$ is equal to the smallest $m\in \N$ such that  the $\Gm$-gerbe $\nu_d^m$
is trivial and this happens precisely when there exists a section $\sigma_d^m:\pd\to \pdt^m$ of $\nu_d^m$.
Since the diagram (\ref{fam-univ}) is Cartesian, the existence of a section $\sigma_d^m$ of $\nu_d^m$ is equivalent to the existence
of a section $\sigma_d'^m$ of $\nu_d'^m$. If such a section exists, then the pull-back $(\sigma_d'^m)^*\NN_m$ is an $m$-Poincar\'e line bundle on $\pd$, by the above description of $\NN_m$. Conversely, the existence of a Poincar\'e line bundle on $\pd$ allows us to define a section $\sigma_d'^m$ of $\nu_d'^m$ by
the above description of $\univ^m$.

%\begin{equation}\label{diag-univ2}
%\xymatrix{
%\cgb\times_{\mgb} \pdtb^m \ar^{u_g^m}[r] \ar[dr]^{\w{\nu_d^m}}\ar[dd]_{\Phi_d^m} \ar@{}[drr]|{\square} & \pdtb^m \ar[dr]^{\nu_d^m} &\\
%&\cgb\times_{\mgb} \pdb \ar^{u_g'}[r] \ar[ld]_{\Phi_d'} \ar@{}[d]|{\square} & \pdb \ar[ld]^{\Phi_d}\\
%\cgb \ar[r]^{u_g} & \mgb &}
%\end{equation}
\end{proof}

\begin{proof}[Proof of Theorem \ref{order-gerbe}]
Consider the group
$$A_{d,g}:=\{m\in \Z\: :\: \text{ there exists an $m$-Poincar\'e line bundle } \L \text{ on } \univ\}
$$
Proposition \ref{Poinc-order} gives that the positive generator of $A_{d,g}$ is equal to
the order of $[\nu_d]$ in $\Br(\pd)$.
On the other hand, the positive generator of $A_{d,g}$ is equal to $(d+g-1,2g-2)$ by
\cite[Application at p. 514]{kou2}. This concludes the proof.

%Let $m\in \N$ be equal to the order of $[\nu_d]$ in $\Br(\pd)$.
%By combining the exact sequence (\ref{succ-Pic}) and Lemma \ref{exis-linebun}, we get that $m$ divides $(d+g-1,2g-2)$.
%In order to prove that $m$ is a multiple of $(d+g-1,2g-2)$,
%let $\PP$ be a $m$-Poincar\'e line bundle on $\u$, which exists by Proposition \ref{Poinc-order}.
%The proof of \cite[Lemma 2.3]{MR}
%(which deals with $m=1$)
%can be easily generalized from the case of an (ordinary) Poincar\'e line bundle to the case of an $m$-Poincar\'e
%line bundle, and gives that
%there exists $n\in \Z$ such that $\Phi_{n(d+1-g)-m}: {\mathcal P}_{n(d+1-g)-m}\to \mg$  has a section.
%According to the (so called) strong Franchetta's conjecture (see \cite{Mes} for a proof in characteristic zero
%and \cite{Sch} for a proof in positive characteristic), $\Phi_e: {\mathcal P}_{e,g} \to \mg$ has a regular
%(or rational) section if and only if $e$ is a multiple of $2g-2$. Therefore we conclude that $n(d+1-g)-m=l(2g-2)$ for some %$l\in \Z$,
%hence  $m$ is a multiple of $(d+1-g, 2g-2)$.
\end{proof}

\begin{remark}
From Proposition \ref{Poinc-order} and Theorem \ref{order-gerbe}, we recover the following well-known result due to
Mestrano-Ramanan (\cite[Cor. 2.9]{MR}):  there exists a Poincar\'e line bundle on $\u$
if and only if $(d+1-g,2g-2)=1$.
\end{remark}

\begin{remark}
It is possible to prove that the index  of $[\nu_d]$ is equal to $(d+g-1,2g-2)$
 (recall that the index of $[\nu_d]$ is the smallest $m\in \N$ such that
$[\nu_d]$ is represented by a projective bundle over $\pd$ of relative dimension $m-1$).
Since we will not need this result, we do not include a proof here.
\end{remark}

We make the following

\begin{conj}\label{C:prob-Brauer}
The cohomological Brauer group $\Br(\pd)$ of $\pd$ is generated by the class $[\nu_d]$ of the $\Gm$-gerbe
$\nu_d: \pdt\to \pd$.
\end{conj}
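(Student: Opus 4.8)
The class $[\nu_d]$ is already understood: by Theorem \ref{order-gerbe} the cyclic subgroup $\langle [\nu_d]\rangle\subseteq \Br(\pd)$ has order $(d+1-g,2g-2)$. Proving the conjecture therefore amounts to computing $\Br(\pd)=H^2_{\rm{\acute et}}(\pd,\Gm)$ in full and checking that it coincides with this cyclic group. The plan is to exploit the fibration $\Phi_d\colon \pd\to \mg$ (which is representable, with fibre $\Phi_d^{-1}(C)=J^d(C)$ a torsor under the Jacobian of $C$) by means of the Leray spectral sequence for the \'etale sheaf $\Gm$, in the same spirit in which \eqref{succ-Pic} was obtained from the rigidification $\nu_d$:
$$E_2^{p,q}=H^p_{\rm{\acute et}}(\mg,R^q(\Phi_d)_*\Gm)\Longrightarrow H^{p+q}_{\rm{\acute et}}(\pd,\Gm).$$
One then reads off $\Br(\pd)$ from the low-degree terms with $p+q=2$, namely $H^2_{\rm{\acute et}}(\mg,\Gm)=\Br(\mg)$, the cross term $H^1_{\rm{\acute et}}(\mg,R^1(\Phi_d)_*\Gm)$ and $H^0_{\rm{\acute et}}(\mg,R^2(\Phi_d)_*\Gm)$, together with the relevant differentials.

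The crucial term is $H^0_{\rm{\acute et}}(\mg,R^2(\Phi_d)_*\Gm)$, the monodromy-invariant part of the relative Brauer group of the family of Jacobians. Here I would argue as follows. A global section of $R^2(\Phi_d)_*\Gm$ restricts, over a very general $[C]\in \mg$, to a class in $\Br(J^d(C))$ invariant under the monodromy of the family, i.e. under the mapping class group acting through $\operatorname{Sp}(2g,\Z)$ on $H^\bullet(J^d(C),\Z)=\bigwedge^\bullet H^1(C,\Z)$. Since the only $\operatorname{Sp}(2g,\Z)$-invariant class in $\bigwedge^2 H^1(C,\Q)$ is the symplectic form, i.e. the theta polarization, the transcendental (divisible) part of $\Br(J^d(C))$ carries no monodromy invariants, and the invariant part is concentrated along the theta class. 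That single invariant direction is exactly realised by the restriction of $[\nu_d]$ to the fibres: by the very computation behind Theorem \ref{order-gerbe} (and Mestrano--Ramanan), $[\nu_d]|_{J^d(C)}$ is the theta-type obstruction to a fibrewise Poincar\'e bundle, of order $(d+1-g,2g-2)$, its relation to the theta divisor being recorded in Lemma \ref{L:Xi-theta}. Thus one expects $H^0_{\rm{\acute et}}(\mg,R^2(\Phi_d)_*\Gm)$ to be generated by the image of $[\nu_d]$.

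It then remains to show that the remaining contributions produce nothing new: that $\Br(\mg)$ (for $g\ge 3$, where $\Pic(\mg)=\Z\lambda$ is already minimal) contributes trivially after pull-back, and that the cross term $H^1_{\rm{\acute et}}(\mg,R^1(\Phi_d)_*\Gm)$ --- governed by the dual universal Jacobian $\widehat{\mathcal J}$ and the rank-one relative N\'eron--Severi sheaf sitting in $0\to \widehat{\mathcal J}\to R^1(\Phi_d)_*\Gm\to \Z\,\theta\to 0$ --- vanishes or maps to zero. Over the complex numbers one could instead try to run this through the computations of Ebert--Randal-Williams (Theorem \ref{T:ERW-thm}), upgrading their description of $H^2(\Pic_g^d,\Z)$ to a description of $H^3(\Pic_g^d,\Z)_{\rm tors}$ and of the divisible part $(\Q/\Z)^{b_2-\rho}$ of the Brauer group. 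The main obstacle, and the reason the statement remains conjectural, is precisely the rigorous control of these transcendental contributions: establishing the vanishing of monodromy-invariant transcendental relative-Brauer classes, taming the higher differentials of the spectral sequence over the non-proper and stacky base $\mg$, and pinning down $\Br(\mg)$ itself.
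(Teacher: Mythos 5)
This statement is a \emph{conjecture} in the paper: the authors offer no proof, only supporting evidence, namely the comparison (via the diagram \eqref{E:comp-stacks} and the exponential sequence) with Ebert--Randal-Williams' computation that $H^3(\Pic_g^d,\Z)$ is cyclic of order $(2g-2,d+g-1)$ generated by the Dixmier--Douady class of $\phi_g^d$. Your proposal is likewise not a proof, and you say so honestly; as a strategy sketch the Leray spectral sequence for $\Phi_d:\pd\to\mg$ is a reasonable frame, and the three problems you isolate at the end (the transcendental part of the relative Brauer sheaf, the higher differentials over the non-proper stacky base, and $\Br(\mg)$ itself) are indeed where the difficulty lies.

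There is, however, one concrete misstep in the sketch. You assert that the invariant direction in $H^0_{\rm\acute et}(\mg,R^2(\Phi_d)_*\Gm)$ ``is exactly realised by the restriction of $[\nu_d]$ to the fibres.'' But the restriction of the gerbe $\nu_d:\pdt\to\pd$ to a single fibre $\Phi_d^{-1}(C)=J^d(C)$ is the gerbe $\Pics^d(C)\to J^d(C)$, which is \emph{trivial} --- a Poincar\'e bundle always exists for a fixed smooth curve $C$; the paper uses exactly this triviality in the proof of Theorem \ref{T:chi-taut}. The obstruction measured by Theorem \ref{order-gerbe} and Mestrano--Ramanan is a phenomenon of the family over $\mg$, not of any individual fibre. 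Consequently $[\nu_d]$ dies under the edge map $\Br(\pd)\to H^0_{\rm\acute et}(\mg,R^2(\Phi_d)_*\Gm)$ and lives in the deeper filtration steps coming from $E_\infty^{1,1}$ and $E_\infty^{2,0}$. So even a complete proof that the monodromy-invariant part of the relative Brauer sheaf vanishes would not by itself locate $[\nu_d]$; the real work is in the terms $H^1_{\rm\acute et}(\mg,R^1(\Phi_d)_*\Gm)$ and $\Br(\mg)$, which your sketch treats as the ``remaining contributions'' to be disposed of. This inversion of where the generator actually sits would have to be repaired before the outline could be turned into an argument.
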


Using the notation of Section \ref{S:comp-top}, the above conjecture must be compared with the result of Ebert and Randal-Williams who proved in \cite[Thm. B]{ERW} that, for $g\geq 6$, $H^3(\Pic_g^d,\Z)$ is cyclic of order $(2g-2,d+g-1)$ and generated by the
Dixmier-Douday class of the $\CC^*$-gerbe $\phi_g^d:\Hol_g^d\to \Pic_g^d$.
From the diagram \eqref{E:comp-stacks} and the coboundary map coming from the exponential sequence
of locally constant sheaves $0\to \Z \to \CC \stackrel{{\rm exp}}{\longrightarrow} \CC^*\to 0$,
we get a map ${\rm cl}: \Br(\pd)\to H^2(\Pic_g^d,\CC^*)\to H^3(\Pic_g^d,\Z)$ which clearly sends the class of $\nu_d$ into the class of $\phi_g^d$. A positive answer to Conjecture \ref{C:prob-Brauer} together with Theorem \ref{order-gerbe}
would imply that the above map ${\rm cl}$ is an isomorphism for $g\geq 6$.

\vspace{0,2cm}

From the above Theorem \ref{order-gerbe}, we deduce the following

\begin{cor}\label{C:comp-Pic}
\noindent
\begin{enumerate}[(i)]
\item \label{C:comp-Pic1} The image of $\Pic(\pdt)$ via the map ${\rm res}$ of \eqref{succ-Pic}
is the subgroup generated by $(d+g-1,2g-2)$.
\item \label{C:comp-Pic2} The pull-back map $\nu_d^*$ induces an isomorphism
$$\nu_d^*:\Pic(\pd)/\Pict(\pd) \stackrel{\cong}{\longrightarrow} \Pic(\pdt)/\Pict(\pdt).$$
\end{enumerate}
\end{cor}
\begin{proof}
Part \eqref{C:comp-Pic1} follows from the exact sequence \eqref{succ-Pic} together with Theorem \ref{order-gerbe}.

Part \eqref{C:comp-Pic2}: using Corollary \ref{C:res-taut}\eqref{C:res-taut1} and part \eqref{C:comp-Pic1}, we get
the following commutative diagram with exact rows:
$$\xymatrix{
 0\ar[r] & \Pic(\pd)\ar[r]^{\nu_d^*}&  \Pic(\pdt) \ar[r]^(0.4){\res} &
 \Z\cdot \langle (d+g-1,2g-2)\rangle \ar[r] & 0 \\
 0\ar[r] & \Pict(\pd)\ar[r]^{\nu_d^*}\ar@{^{(}->}[u]&  \Pict(\pdt) \ar[r]^(0.4){\res} \ar@{^{(}->}[u] &
\Z\cdot  \langle (d+g-1,2g-2)\rangle \ar[r] \ar@{=}[u] & 0 \\
}
$$
The conclusion follows from the snake lemma.
\end{proof}

\section{The Picard group of $\pd$}\label{S:Pic-rigid}

In this subsection we will determine the Picard group of the stack $\pd$, using a strategy similar to the one used by
Kouvidakis \cite{kouvidakis} to determine the Picard group of $J_{d,g}^0$, the open subset of $J_{d,g}$ consisting of pairs $(C,L)$ where
$C$ is a smooth curve without non-trivial automorphisms.

Consider the representable morphism $\Phi_d: \pd\to \mg$. Clearly the fiber of $\Phi_d$ over $C\in \mg$
is the degree-$d$ Jacobian $J^d(C)$ of $C$. Since $\Phi_d$ has connected fibers, the pull-back map
$\Phi_d^*: \Pic(\mg)\to \Pic(\pd)$ is injective.
%Recall that, if $g\geq 3$, then  $\Pic(\mg)$ is freely generated over $\Z$ by the Hodge line bundle $\lambda$
%(see \ref{sec-pic-mg}).
The cokernel of $\Phi_d^*$ is denoted by $\RPic(\pd)$ and is called classically
the group of rationally determined line bundles of the family $\pd\to \mg$ (see e. g. \cite{Cil}).
Therefore, we have the following exact sequence
\begin{equation}\label{ex-Pic-open}
0\to \Pic(\mg) \stackrel{\Phi_d^*}{\to} \Pic(\pd)\to \RPic(\pd)\to 0.
\end{equation}
Since the fiber of $\Phi_d$ over $C\in \mg$  is the degree-$d$ Jacobian $J^d(C)$ of $C$, we have a natural
map
\begin{equation}\label{res-C}
\rho_C:\Pic(\pd)\to \Pic(J^d(C))\to NS(J^d(C)),
\end{equation}
where the first map is the restriction to the fiber $\Phi_d^{-1}(C)=J^d(C)$ and the second map is the projection of the Picard  group of $J^d(C)$
onto the N\'eron-Severi group of $J^d(C)$, which parametrizes  divisors on $J^d(C)$ up to algebraic equivalence. We will use additive notation for the group law
on $NS(J^d(C))$.

Consider now the theta divisor $\Theta(C)\subset J^{g-1}(C)$ and denote by $\theta_C\in $ $ NS(J^{g-1}(C))$ its algebraic equivalence class.
By choosing an isomorphism $t_M:J^d(C)\stackrel{\cong}{\to} J^{g-1}(C)$ given by sending $L\in J^d(C)$ into $L\otimes M\in J^{g-1}(C)$ for some $M\in J^{g-1-d}(C)$,
we can pull-back  $\theta_C$ to get a well-defined (i.e. independent of the chosen isomorphism $t_M$) class in $NS(J^d(C))$ which, by
a slight abuse of notation, we will still denote by $\theta_C$.
Since, for a very general curve $C\in \mg$,  $NS(J^d(C))$  is generated by  $\theta_C$ (see e. g.
\cite[Lemma 2]{kouvidakis}), it follows that there is a morphism of groups
\begin{equation}\label{E:map-i}
\chi_d:\Pic(\pd)\longrightarrow \Z
\end{equation}
sending $\L\in \Pic(\pd)$ to the integer $m$ such that  $\rho_C(\L)=m\theta_C$ for every $C\in \mg$
 (see also \cite[p. 840]{kouvidakis}).
We will need the following two results of Kouvidakis, describing the image and the kernel of the
above map $\chi_d$. Actually, Kouvidakis  proves these results in \cite{kouvidakis} for the variety $J_{d,g}^0$, but a close inspection reveals that the same proof works for $\pd$.

\begin{thm}[Kouvidakis]\label{T:Kouvi}
\noindent
\begin{enumerate}[(i)]
\item \label{T:Kouvi1} $\ker \chi_d=\Im \Phi_d^*$.
\item \label{T:Kouvi2} $\displaystyle \Im \chi_d\subseteq   \frac{2g-2}{(2g-2, d+g-1)}\cdot \Z \subseteq \Z$.
\end{enumerate}
\end{thm}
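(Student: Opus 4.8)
The plan is to carry out both statements at the level of the generic fibre of $\Phi_d$, which is where Kouvidakis's argument for $J_{d,g}^0$ really takes place; since $\pd$ and $J_{d,g}^0$ share the same generic fibre over $\mg$, namely the Jacobian of the generic curve, the proof transfers essentially unchanged. Write $K=k(\mg)$ for the function field and let $C_\eta$ be the generic curve, a smooth projective curve over $K$ whose degree-$d$ Jacobian $J^d(C_\eta)=\Pic^d(C_\eta)$ is the generic fibre of $\Phi_d$. The single geometric input I will use is the \emph{Franchetta property}: the group of $K$-rational line bundle classes on $C_\eta$ is freely generated by the canonical class $\omega$, which for $g\geq 3$ is a reformulation of Theorem \ref{pic-mg}(i). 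In particular the degree-$0$ part $\Pic^0(C_\eta)(K)$ is trivial, and the index of $C_\eta$ (the gcd of the degrees of $K$-rational classes) equals $2g-2$.

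For part \eqref{T:Kouvi1}, the inclusion $\Im \Phi_d^*\subseteq \ker \chi_d$ is immediate, since a bundle pulled back from $\mg$ restricts with zero N\'eron--Severi class on every fibre $J^d(C)$. For the reverse inclusion, let $\L\in\ker\chi_d$, so that $\rho_C(\L)=0$ for very general $C$; then $\L$ restricts to a class in $\Pic^0(J^d(C))$ on each fibre and thus defines an element of the relative Picard scheme $\Pic^0_{\pd/\mg}(\mg)$. As $\pd\to\mg$ is a torsor under the universal Jacobian $\mathcal J^0_g\to\mg$, self-duality of Jacobians identifies this relative $\Pic^0$ with $\mathcal J^0_g$ itself, so $\L$ produces a relative degree-$0$ line bundle on the universal curve, i.e.\ an element of $\Pic^0(C_\eta)(K)$. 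By the Franchetta property this element vanishes, so $\L$ is fibrewise trivial; a seesaw argument (using that $\Phi_d$ is proper with connected fibres) then gives $\L=\Phi_d^*(\Phi_{d\,*}\L)\in\Im\Phi_d^*$.

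Part \eqref{T:Kouvi2} is the real obstacle, and the crux is to pin down the exact constant rather than mere finiteness of the index. Given $\L\in\Pic(\pd)$ with $\rho_C(\L)=m\theta_C$, restriction to the generic fibre shows that the N\'eron--Severi class $m\theta$ is represented by a line bundle on the torsor $\Pic^d(C_\eta)$ \emph{defined over} $K$. The plan is to apply the standard descent criterion for a polarization-type class on a torsor under an abelian variety: using that the symmetric theta class is canonically $K$-rational on $\Pic^{g-1}(C_\eta)$ (it is $\mathcal{O}(W_{g-1})$) and that the principal polarization $\phi_\theta\colon J^0(C_\eta)\xrightarrow{\sim}\widehat{J^0(C_\eta)}$ is an isomorphism, the obstruction to $K$-rationality of $m\theta$ on $\Pic^d(C_\eta)$ equals $m(d-g+1)\,\tau$, where $\tau=[\Pic^1(C_\eta)]$ generates the torsor obstruction and $[\Pic^n]=n\tau$. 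Since the order of $\tau$ is the index of $C_\eta$, which is $2g-2$ by Franchetta, the vanishing of this obstruction forces $(2g-2)\mid m(d-g+1)$. Writing $e=(2g-2,d+g-1)=(2g-2,d-g+1)$ and clearing the common factor, this is equivalent to $\kdg=\frac{2g-2}{e}\mid m$, which is exactly the asserted inclusion $\Im\chi_d\subseteq \kdg\cdot\Z$.

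I expect the main difficulty to be precisely this obstruction computation in \eqref{T:Kouvi2}: identifying the relevant class with $m(d-g+1)\tau$ and computing its order requires care with the identification of the dual torsor and with the normalization of $\theta$, and it is the step where Kouvidakis's explicit analysis in \cite{kouvidakis} is used. I would finally remark that passing from the scheme $J_{d,g}^0$ to the stack $\pd$ introduces nothing new here: every step above takes place over the generic point of $\mg$, where the curve has no nontrivial automorphisms, so the generic fibres of $\pd\to\mg$ and of $J_{d,g}^0$ coincide and the same computation applies verbatim.
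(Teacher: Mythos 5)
Your proposal takes a genuinely different route from the paper, which offers no argument of its own for this statement: the authors simply cite Kouvidakis's Theorem 3 and his formula (*) on p.~844 of \cite{kouvidakis} for the scheme $J^0_{d,g}$ and remark that ``a close inspection reveals that the same proof works for $\pd$'' (your closing observation that everything happens over the generic point of $\mg$, where $\pd$ and $J^0_{d,g}$ have the same fibre, is exactly why that remark is cheap). What you do instead is replace Kouvidakis's explicit analysis by a Galois-descent computation on the generic fibre: part \eqref{T:Kouvi1} via the relative $\Pic^0$ and autoduality of the Jacobian, and part \eqref{T:Kouvi2} via the obstruction $m(d-g+1)\,\phi_\theta(\tau)\in H^1(K,\widehat{J^0(C_\eta)})$ to descending the class $m\theta$ to the torsor $\Pic^d_{C_\eta/K}$. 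The structure of this argument is sound, and the final arithmetic ($(2g-2)\mid m(d-g+1)$ if and only if $\kdg\mid m$, using $(2g-2,d-g+1)=(2g-2,d+g-1)$) is correct; it buys a conceptual explanation of where the constant $\kdg$ comes from (it is the period of the generic curve divided by $(d-g+1,\,\mathrm{period})$), at the price of invoking descent theory for torsors under abelian varieties.

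The one point you must repair is the Franchetta input, which as stated does not suffice. In \eqref{T:Kouvi1} you need $\Pic^0_{C_\eta/K}(K)=0$, i.e.\ the vanishing of all $K$-\emph{points} of the degree-zero Picard scheme, and in \eqref{T:Kouvi2} you need the order of $\tau=[\Pic^1_{C_\eta/K}]$ in $H^1(K,J^0(C_\eta))$ --- which is the \emph{period} of $C_\eta$, not its index --- to equal $2g-2$. Neither is a reformulation of Theorem \ref{pic-mg}(i): the weak Franchetta property (every line bundle on $C_\eta$ defined over $K$ is a power of $\omega_{C_\eta}$) controls only the subgroup $\Pic(C_\eta)\subseteq\Pic_{C_\eta/K}(K)$, and the quotient is a priori nontrivial torsion detected in $\Br(K)$. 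What you need is the strong Franchetta theorem (Mestrano, Schr\"oer): every $K$-point of $\Pic_{C_\eta/K}$ is an integer multiple of $\omega_{C_\eta}$. You can either cite it, or deduce the two facts you need from the weak version as follows: weak Franchetta gives that the index of $C_\eta$ is $2g-2$, so $(2g-2)\cdot\Pic^0_{C_\eta/K}(K)$ has trivial Brauer obstruction and hence lands in $\Pic^0(C_\eta)=0$; thus every $K$-point of $J^0(C_\eta)$ is $(2g-2)$-torsion, and such torsion vanishes because the monodromy acts on $J^0(C_\eta)[n]$ through all of $\mathrm{Sp}_{2g}(\Z/n)$, which has no nonzero invariants. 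With that input supplied, your proof goes through.
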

Part \eqref{T:Kouvi1} follows from \cite[Thm. 3]{kouvidakis}; part \eqref{T:Kouvi2} follows from
\cite[Formula (*), p. 844]{kouvidakis}. Note that part \eqref{T:Kouvi1} implies (and it is indeed equivalent to)
that the map $\chi_d$ factors as
\begin{equation}\label{E:factor-chi}
\chi_d: \Pic(\pd)\twoheadrightarrow \RPic(\pd)\hookrightarrow \Z.
\end{equation}

%By \cite[Thm. 3]{kouvidakis}, two line bundles $\L, \L'\in \Pic(\pd)$ define the same
%element in $\RPic(\pd)$ if and only if their restrictions to the geometric fibers
%(equivalently to a general fiber, or equivalently to a very general fiber if the base field $k$ is uncountable)
%of $\Phi_d$ are algebraically equivalent.

\noindent We now compute the image of the map $\chi_d$ on the tautological subgroup $\Pict(\pd)$ of $\Pic(\pd)$
(see Definition \ref{D:taut-rigid}).

\begin{thm}\label{T:chi-taut}
We have that
$$\chi_d(\Pict(\pd))=\frac{2g-2}{(2g-2, d+g-1)}\cdot \Z \subseteq \Z.$$
\end{thm}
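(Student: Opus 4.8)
The plan is to evaluate $\chi_d$ on a set of generators of $\Pict(\pd)$ and read off the subgroup of $\Z$ they span. By Corollary~\ref{C:res-taut}\eqref{C:res-taut2}, $\Pict(\pd)$ is generated by $\Lambda(1,0)$ and the bundle $\Xi$ of \eqref{D:defi-Xi}, so it suffices to compute $\chi_d(\Lambda(1,0))$ and $\chi_d(\Xi)$. The first is immediate: by Lemma~\ref{L:comp-taut} we have $\Lambda(1,0)=\Phi_d^*\Lambda\in \Im\Phi_d^*$, and $\Im\Phi_d^*=\ker\chi_d$ by Theorem~\ref{T:Kouvi}\eqref{T:Kouvi1}, whence $\chi_d(\Lambda(1,0))=0$. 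Thus $\chi_d(\Pict(\pd))$ is the cyclic subgroup generated by the single integer $\chi_d(\Xi)$, and everything reduces to computing this integer.

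To compute $\chi_d(\Xi)$ I would restrict to a single fibre. Fix a very general $C\in\mg$, so that $NS(J^d(C))=\Z\cdot\theta_C$, and let $\pi\colon C\times J^d(C)\to J^d(C)$ be the projection. For a \emph{fixed} smooth curve a Poincaré bundle $\mathcal{P}$ on $C\times J^d(C)$ always exists (rigidify at a point of $C$); as in the proof of Proposition~\ref{Poinc-order} this is exactly a section $s$ of the $\Gm$-gerbe $\nu_d$ over $J^d(C)$. By the functoriality of the determinant of cohomology (Fact~\ref{F:prop-det}), pulling the universal family back along $s$ identifies $s^*\Lambda(0,1)=d_\pi(\mathcal{P})$ and $s^*\Lambda(1,1)=d_\pi(\omega_{\pi}\otimes\mathcal{P})$, so that $\rho_C(\Xi)$ is computed from these by the same monomial combination that defines $\Xi$. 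Using the relations $\kappa_{0,1}=\lambda(1,1)-\lambda(0,1)$ and $\kappa_{-1,2}=\lambda(0,1)+\lambda(1,1)-2\lambda(1,0)$ of Theorem~\ref{T:taut-rel}, one rewrites
\[
c_1(\Xi)=\tfrac{a-b}{2}\,\kappa_{-1,2}-\tfrac{a+b}{2}\,\kappa_{0,1}+(a-b)\,\lambda(1,0),
\]
where $a=\frac{d+g-1}{(d+g-1,d-g+1)}$ and $b=\frac{d-g+1}{(d+g-1,d-g+1)}$, so that $a-b=\kdg$ and $a+b=\frac{2d}{(d+g-1,d-g+1)}$.

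The core of the argument is then the intersection computation on $C\times J^d(C)$. Writing $c_1(\mathcal{P})=d\,\alpha+\gamma$, with $\alpha=[\{p\}\times J^d(C)]$ and $\gamma\in H^1(C)\otimes H^1(J^d(C))$ the standard off-diagonal part, and using $\alpha^2=0$, $\alpha\gamma=0$, $\gamma^2=-2\,\alpha\,\theta_C$ together with $c_1(\omega_{\pi})=(2g-2)\alpha$, I would obtain $\rho_C(\lambda(1,0))=0$, $\rho_C(\kappa_{0,1})=0$ and $\rho_C(\kappa_{-1,2})=\pi_*(\gamma^2)=-2\theta_C$. Substituting gives $\rho_C(\Xi)=-(a-b)\theta_C=-\kdg\,\theta_C$, hence $\chi_d(\Xi)=-\kdg=-\frac{2g-2}{(2g-2,d+g-1)}$ and $\chi_d(\Pict(\pd))=\frac{2g-2}{(2g-2,d+g-1)}\cdot\Z$, as claimed. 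The step I expect to require the most care is controlling the dependence of the fibre computation on the normalization of $\mathcal{P}$: twisting $\mathcal{P}$ by $\pi^*N$ alters $\kappa_{0,1}$ and $\kappa_{-1,2}$ on the fibre by terms proportional to $c_1(N)$, and I must verify — as the weight-zero nature of $\Xi$ forces — that these contributions cancel in the combination $\frac{a-b}{2}\kappa_{-1,2}-\frac{a+b}{2}\kappa_{0,1}$. This cancellation is precisely the identity $2d\,(a-b)=(2g-2)(a+b)$, which holds because $a-b=\kdg$ and $a+b=\frac{2d}{(d+g-1,d-g+1)}$, and it confirms that $\chi_d(\Xi)$ is well defined independently of the chosen Poincaré bundle.
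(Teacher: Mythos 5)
Your proposal is correct, and its reduction steps coincide with the paper's: evaluate $\chi_d$ on the generators $\Lambda(1,0)$ and $\Xi$ of $\Pict(\pd)$ furnished by Corollary \ref{C:res-taut}, kill $\Lambda(1,0)$ via Lemma \ref{L:comp-taut} and Theorem \ref{T:Kouvi}, and compute $\rho_C(\Xi)$ on a single fibre by means of a Poincar\'e bundle for the fixed curve $C$ (which exists because the gerbe $\nu_C$ over a point of $\mg$ is trivial). Where you genuinely diverge is in the core computation of $\rho_C(\Xi)$. The paper stays at the level of determinants of cohomology: it normalizes the Poincar\'e bundle by the condition that its restrictions to $p_1^{-1}(r)$ be algebraically trivial, proves by exact-sequence manipulations (Claim 1) that $[d_{p_2}(\w{\L}_C\otimes p_1^*M)]$ is independent of $M$, and identifies $[d_{p_2}(\w{\L}_C)]$ with the theta class by translating to $J^{g-1}(C)$ (Claim 2). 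You instead pass to first Chern classes, expand $c_1(\Xi)$ in the $\kappa_{i,j}$ via the Grothendieck--Riemann--Roch relations of Theorem \ref{T:taut-rel}, and evaluate on the fibre using the K\"unneth decomposition $c_1(\mathcal{P})=d\alpha+\gamma$ and the standard identities $\alpha^2=\alpha\gamma=0$, $\gamma^2=-2\alpha\,\theta_C$; your final cancellation identity $2d(a-b)=(2g-2)(a+b)$ plays exactly the role of the paper's Claim 1 in establishing independence of the normalization of $\mathcal{P}$, and I checked it is correct. Your route is more computational but arguably more transparent, since it makes the weight-zero mechanism behind the well-definedness of $\rho_C(\Xi)$ completely explicit; the paper's route avoids any appeal to the cohomology ring of $C\times J^d(C)$ and works purely with the multiplicativity and functoriality of $d_{\pi}$. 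One cosmetic remark: you obtain $\rho_C(\Xi)=-\kdg\,\theta_C$ while the paper states $+\kdg\,\theta_C$; the discrepancy traces to the sign convention in the standard fact relating $d_{p_2'}$ of a degree-$(g-1)$ Poincar\'e bundle to $\O(\Theta)$, and it is immaterial here since only the subgroup of $\Z$ generated by $\chi_d(\Xi)$ enters the statement.
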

\begin{proof}
According to Corollary \ref{C:res-taut}\eqref{C:res-taut2}, $\Pict(\pd)$ is generated by the tautological
classes $\Lambda(1,0)$ and $\Xi$. Lemma \ref{L:comp-taut} gives that $\Lambda(1,0)=\Phi_d^*(\Lambda)$;
hence clearly $\chi_d(\Lambda(1,0))=0$ (this is the easy inclusion in Theorem \ref{T:Kouvi}\eqref{T:Kouvi1}).
Therefore, the proof will follow if we show that
\begin{equation}\label{E:chi-xi}
\chi_d(\Xi)=\frac{2g-2}{(2g-2, d+g-1)},
\end{equation}
or equivalently that
\begin{equation}\label{E:resC-xi}
\rho_C(\Xi)=\frac{2g-2}{(2g-2, d+g-1)}\theta_C
\end{equation}
for any $C\in \mg$.
%We claim that there exists a morphism of groups $\w{\chi}_d:\Pict(\pdt)\to \Z$ making the following diagram commutative
%\begin{equation}\label{E:exte-chi}
%\xymatrix{
%\Pict(\pdt) \ar[rd]^{\w{\chi}_d} & \\
%\Pict(\pd)\ar[r]^(0.6){\chi_d} \ar@{^{(}->}[u] & \Z
%}
%\end{equation}
%and such that the values of the morphism $\w{\chi}_d$ on the generators of $\Pict(\pdt)$ (see Corollary  \ref{C:taut-Pic}) are given by
%\begin{equation}\label{E:chi-gen}
%\begin{sis}
%& \w{\chi}_d(\Lambda(1,0))=0, \\
%& \w{\chi}_d(\Lambda(0,1))=1, \\
%& \w{\chi}_d(\Lambda(1,1))=1. \\
%\end{sis}
%\end{equation}
In order to prove this, consider the following diagram
\begin{equation}\label{E:big-diag}
\xymatrix@=1pc{
& \L_C \ar@{-}[d] &&& \L_d\ar@{-}[d] & \\
& C\times \Pics^d(C) \ar[dl]_p\ar[ddr]^(0.3){\id\times \nu_C} \ar[rrr] &&& \univ \ar[dl]_{\pi} \ar[ddr] & \\
\Pics^d(C) \ar[ddr]_{\nu_C}  \ar@{}[drr]|{\square} \ar[rrr] &&& \pdt \ar[ddr]_(0.2){\nu_d}  \ar@{}[drr]|{\square} && \\
&&C\times J^d(C)\ar[dl]^{p_2} \ar[rrr] &&& \u\ar[dl]^{\w{\pi}} \\
& J^d(C)\ar[d] \ar[rrr] &&& \pd \ar[d]^{\Phi_d} & \\
& C \ar@{^{(}->}[rrr] &&& \mg &
}
\end{equation}
where the Cartesian square on the left is the fiber of the Cartesian square on the right over the point $C\in \mg$
and $\L_C$ is the fiber of  the universal line bundle $\L_d$ over $C\in \mg$.  In particular, the stack $\Pics^d(C)$ is the degree-$d$ Jacobian stack of
$C$ (i.e. the stack whose fiber over a scheme $S$ is the groupoid of line bundles on $C\times S$ of relative degree $d$ over $S$) and $\L_C$
is the universal (or Poincar\'e) line bundle for $\Pics^d(C)$.

The map $\nu_C:\Pics^d(C)\to J^d(C)$ is a $\Gm$-gerbe which is well-known to be trivial, or in other words $\Pics^d(C)\cong J^d(C)\times B\Gm$.
Therefore, there exists a section $s$ of $\nu_C$ and we can define  $\w{\L}_C:=(\id\times s)^*(\L_C)$. By construction, we have that
$\w{\L}_{|C\times \{M\}}=M$ for any $M\in J^d(C)$.  Any line bundle on $C\times J^d(C)$ with this property is called  a Poincar\'e line bundle for $J^d(C)$.
Indeed, any Poincar\'e line bundle for $J^d(C)$ is isomorphic to $(\id\times s)^*(\L_C)$ for a uniquely determined section  $s$ of $\nu_C$.
Moreover, two Poincar\'e line bundles for $J^d(C)$ differ by the tensor product with the pull-back of a line bundle on $J^d(C)$.
Note that for any Poincar\'e line bundle $\w{\L}_C=(\id\times s)^*(\L_C)$ for $J^d(C)$, we have that
$(\id\times \nu_C)^*(\w{\L}_C)=(\id\times \nu_C)^*((\id\times s)^*(\L_C))=\L_C$.

%From now on, we fix a Poincar\'e line bundle $\w{\L}_C=(\id\times s)^*(\L_C)$ on $C\times J^d(C)$.
Recalling the definition of $\Xi$ from Corollary \ref{C:res-taut}\eqref{C:res-taut2} and applying the functoriality of the determinant of cohomology 
to the above diagram \eqref{E:big-diag}, we get that
\begin{equation}\label{E:rho-xi1}
\rho_C(\Xi)= \frac{d+g-1}{(d+g-1,d-g+1)}[d_{p_2}(\w{\L}_C)] - \frac{d-g+1}{(d+g-1,d-g+1)} [d_{p_2}(\w{\L}_C\otimes p_1^*(\omega_C))],
\end{equation}
where $p_1:C\times J^d(C)$ denotes the projection onto the first factor and $\w{\L}_C$ is \emph{any }ÊPoincar\'e line bundle for $J^d(C)$.
Note that the fact that $\Xi\in \Pic(\pd)$ guarantees that the right hand side of \eqref{E:rho-xi1}
is independent of  the choice   $\w{\L}_C$.

In order to compute the right hand side of \eqref{E:rho-xi1}, we  can choose a Poincar\'e line bundle
$\w{\L}_C$ for $J^d(C)$ that satisfies the following

\un{Condition (*)}: $[(\w{\L}_C)_{|p_1^{-1}(r)}]=0\in NS(J^d(C))$ for any $r\in C$.

Indeed, since $\w{\L}_C$ can be seen as a family of line bundles on $J^d(C)$ parametrized by $C$, if condition (*) holds for a certain point $r_0\in C$ then it holds
for all points $r\in C$. However, up to tensoring $\w{\L}_C$ with the pull-back of a line bundle on $J^d(C)$, we can always assume that $(\w{\L}_C)_{|p_1^{-1}(r_0)}$ is the trivial
line bundle on $J^d(C)$, q.e.d.

With the above condition on $\w{\L}_C$, we can prove the following two claims.

\un{Claim 1}: If $\w{\L}_C$ satisfies condition (*) then
$$[d_{p_2}(\w{\L}_C\otimes p_1^*(M))]=[d_{p_2}(\w{\L}_C)]\in NS(J^d(C)) \text{ for any } M\in J(C).$$

Indeed, write $M=\O_C(-\gamma+\delta)$ with  $\gamma=\sum_i a_i r_i$ and $\delta=\sum_j b_j r_j$ effective divisors on $C$.
From the exact sequences defining the structure sheaves of $p_1^{-1}(\delta)\subset C\times J^d(C)$ and $p_1^{-1}(\gamma)\subset C\times J^d(C)$, we get
$$
\begin{sis}
&0\to \w{\L}_C\otimes p_1^*\O_C(-\gamma)\to \w{\L}_C\to (\w{\L}_C)_{|p_1^{-1}(\gamma)}\to 0,  \\
&0\to \w{\L}_C\otimes p_1^*\O_C(-\gamma)\to \w{\L}_C\otimes p_1^*(M)\to (\w{\L}_C)_{|p_1^{-1}(\delta)}\to 0. \\
\end{sis}
$$
From the multiplicativity of the determinant of cohomology applied to the above exact sequences, we get
\begin{equation*}
d_{p_2}(\w{\L}_C\otimes p_1^*M)\otimes d_{p_2}(\w{\L}_C)^{-1}= d_{p_2}((\w{\L}_C)_{|p_1^{-1}(\delta)}) \otimes d_{p_2}((\w{\L}_C)_{|p_1^{-1}(\gamma)})^{-1}=
\end{equation*}
$$=\bigotimes_j (\w{\L}_C)_{p_1^{-1}(r_j)}^{ b_j} \bigotimes_i (\w{\L}_C)_{p_1^{-1}(r_i)}^{- a_i}.$$
Claim 1 follows now by condition (*).

\un{Claim 2}: If $\w{\L}_C$ satisfies condition (*) then
$$[d_{p_2}(\w{\L}_C)]=\theta_C\in NS(J^d(C)).$$

Indeed, choose a line bundle $M\in J^{d-g+1}(C)$ and consider the Cartesian diagram
$$\xymatrix{
(\id\times t_M)^*(\w{\L}_C)\ar@{-}[d] & \w{\L}_C \ar@{-}[d] \\
C\times J^{g-1}(C) \ar[r]^{\id \times t_M} \ar[d]^{p_2'}& C\times J^d(C) \ar[d]^{p_2} \\
J^{g-1}(C) \ar[r]^{t_M} & J^d(C),
}
$$
where $t_M$ is the map sending $L\in J^{g-1}(C)$ into $L\otimes N\in J^d(C)$. The line bundle $\w{\L}'_C:=(\id\times t_M)^*(\w{\L}_C)\otimes p_1^*(M)^{-1}$
is clearly a Poincar\'e line bundle for $J^{g-1}(C)$ and it satisfies condition (*) since $\w{\L}_C$ satisfies condition (*) by assumption.
Therefore, using the functoriality of the determinant of cohomology and Claim 1, we get the following equality in $NS(J^{g-1}(C))$:
\begin{equation}\label{E:back-det}
[t_M^* d_{p_2}(\w{\L}_C)]=[d_{p_2'}((\id\times t_M)^*(\w{\L}_C))]=[d_{p_2'}(\w{\L}_C'\otimes p_1^*(M))]=[d_{p_2'}(\w{\L}_C')].
\end{equation}
Claim 2 now follows from the well-known fact that $d_{p_2'}(\w{\L}_C')\in \Pic(J^{g-1}(C))$ is the line bundle associated to the theta divisor $\Theta(C)\subset J^{g-1}(C)$ for
any Poincar\'e line bundle $\w{\L}_C'$ for $J^{g-1}(C)$.

Now choosing a Poincar\'e line bundle $\w{\L}_C$ that satisfies condition (*), formula \eqref{E:rho-xi1} together with Claim 1 and Claim 2 gives that
$$\rho_C(\Xi)=\frac{d+g-1}{(d+g-1,d-g+1)} \theta_C- \frac{d-g+1}{(d+g-1,d-g+1)}\theta_C=$$
$$=\frac{2g-2}{(2g-2, d+g-1)}\theta_C,$$
which proves \eqref{E:resC-xi}.
\end{proof}

By combining the above results, we can now prove the main Theorems \ref{T:MainThmA} and \ref{T:MainThmB}
from the introduction.

\begin{proof}[Proof of Theorem \ref{T:MainThmB}]

Let us first prove Theorem \ref{T:MainThmB}\eqref{T:MainThmB1}.
By combining Theorem \ref{T:Kouvi}\eqref{T:Kouvi2} with Theorem \ref{T:chi-taut}, we get
that $\chi_d(\Pic(\pd))=\chi_d(\Pict(\pd))$. By Theorem  \ref{T:Kouvi}\eqref{T:Kouvi1}, the kernel of $\chi_d$ is equal to $\Phi_d^*(\Pic(\mg))$, which is generated by $\Lambda(1,0)=\Phi_d^*(\Lambda)$ by Theorem \ref{pic-mg} and Lemma \ref{L:comp-taut}; hence $\Im \Phi_d^*\subset \Pict(\pd)$. We deduce that
\begin{equation}\label{E:equa-taut}
\Pict(\pd)=\Pic(\pd).
\end{equation}
Therefore, $\Pic(\pd)$ is generated by $\Lambda(1,0)$ and by $\Xi$ by Corollary \ref{C:res-taut}\eqref{C:res-taut2}.
Consider now the exact sequence \eqref{ex-Pic-open}. Combining the factorization of $\chi_d$ provided by \eqref{E:factor-chi} with formula \eqref{E:chi-xi}, we get that $\RPic(\pd)$ is free of rank one.
On the other hand, using Theorem \ref{pic-mg} (since $g\geq 3$ by assumption), we know that
$\Pic(\mg)$ is free of rank one. Therefore the exact sequence \eqref{ex-Pic-open} gives that
$\Pic(\pd)$ is free of rank two, which concludes the proof of part \eqref{T:MainThmB1}.

Theorem \ref{T:MainThmB}\eqref{T:MainThmB2} follows now from part \eqref{T:MainThmB1} and Corollary \ref{C:ex-seq-rig}.
\end{proof}

\begin{proof}[Proof of Theorem \ref{T:MainThmA}]
Let us first prove Theorem \ref{T:MainThmA}\eqref{T:MainThmA1}.
From \eqref{E:equa-taut} and Corollary \ref{C:comp-Pic}\eqref{C:comp-Pic2}, we deduce that
\begin{equation}\label{E:equa-taut2}
\Pict(\pdt)=\Pic(\pdt).
\end{equation}
Therefore, $\Pic(\pdt)$ is generated by $\Lambda(1,0)$, $\Lambda(0,1)$ and $\Lambda(1,1)$ by  Corollary
\ref{C:taut-Pic}. Moreover, the exact sequence \eqref{succ-Pic} together with Theorem \ref{T:MainThmB}\eqref{T:MainThmB1} implies that $\Pic(\pdt)$ is free of rank three.
Part \eqref{T:MainThmA1} is now proved.

Theorem \ref{T:MainThmA}\eqref{T:MainThmA2} follows now from part \eqref{T:MainThmA1} and Theorem \ref{T:ex-seq}.

\end{proof}

We can now compare our computation of $\Pic(\pd)$ (see Theorem \ref{T:MainThmB}\eqref{T:MainThmB1}) with the
computation of $\Pic(J_{d,g}^0)$ carried out by Kouvidakis in \cite{kouvidakis}.

\begin{remark}\label{Kou-mistake}
Assume that $g\geq 3$. Then the natural map $\Psi_d:\pd\to J_{d,g}$ is an isomorphism over the open subset
$J_{d,g}^0\subset J_{d,g}$ parametrizing pairs $(C,L)\in J_{d,g}$ such that $C$ does not have non-trivial
automorphisms. In other words, the map $\Psi_d$ induces an isomorphism
$$\Psi_d:\pdo:=\Psi_d^{-1}(J^0_{d,g})\stackrel{\cong}{\longrightarrow} J_{d,g}^0.$$
Therefore, we get a natural homomorphism
\begin{equation}\label{E:comp-kou}
\psi:\Pic(\pd)\to \Pic(\pdo)\xrightarrow[\Psi_d^*]{\cong} \Pic(J^0_{d,g}),
\end{equation}
where the first homomorphism is the natural restriction map.

If $g\geq 4$, then the codimension of $\pd\setminus \pdo$ inside $\pd$ is at least two
and hence the map $\psi$ is an isomorphism by Fact \ref{Fact-Pic}\eqref{Fact-Pic3}.
Hence Theorem \ref{T:MainThmB}\eqref{T:MainThmB1} recovers \cite[Thm. 4]{kouvidakis}.
However, this does not hold anymore if $g=3$ since in this
case $\pd\setminus \pdo$  is a divisor inside $\pd$, namely the pull-back of the hyperelliptic (irreducible)
divisor in ${\mathcal M}_3$, whose class in $A^1(\mg)$ is  equal to $9 \lambda$
(see \cite[Chap. 3, Sec. E]{HM}). Therefore, by Fact \ref{Fact-Pic}\eqref{Fact-Pic2}, we get that
$\Pic(\pdo)\cong \Pic(J^0_{d,g})$ is the quotient of $\Pic(\pd)$ by the relation $\Lambda(1,0)^9=0$.
%Hence, \cite[Thm. 4]{kouvidakis} does not hold for $g=3$.
\end{remark}

\subsection{Relation between $\Xi$ and the universal theta divisor}\label{S:rela-theta}

There is a close relationship between the line bundle $\Xi\in \Pic(\pd)\subset \Pic(\pdt)$ and the universal theta divisor
$\Theta\subset \pmidt$, which is the closed substack parametrizing pairs $(C,L)\in \pmidt$ such that $h^0(C,L)>0$. Observe that
$\Theta$ naturally descends to a divisor on the rigidification $\pmid$, which we  denote by $\ov{\Theta}$ and we call the universal theta divisor
on $\pmid$. By construction, the restriction of $\ov{\Theta}$ to any fiber $\Phi_d^{-1}(C)=J^{g-1}(C)$ is isomorphic to the theta divisor $\Theta(C)\subset J^{g-1}(C)$.

Consider first the special case $d=g-1$. From the definition \eqref{D:defi-Xi} of $\Xi$ and using the definition \eqref{E:taut-lb} of the tautological line bundles,
we get that $\Xi=\Lambda(0, 1)=d_{\pi}(\L_{g-1})$, where $\L_{g-1}$ is the universal line bundle on the universal family over $\pmidt$. It is well know that
$d_{\pi}(\L_{g-1})$ is the line bundle associated to the universal theta divisor, or in other words we have that
\begin{equation}\label{Xi-theta1}
\Xi=\O(\Theta) \: \: \:  \text{ if } d=g-1.
\end{equation}

For an arbitrary $d$,  we consider the stack $\skd$ of $k_{d,g}$-spin curves, where as usual
$$\kdg=\displaystyle \frac{2g-2}{(2g-2,d+1-g)}.$$
Recall that $\skd$ is the stack whose fiber over a scheme
$S$ consists of the groupoid of families of smooth curves  $\C\to S$ of genus $g$, plus a line bundle
$\eta$ on $\C$ of relative degree $\mcd$  over $S$ endowed with
an isomorphism $\eta^{\otimes \kdg}\cong \omega_{\C/S}$. The stack $\skd$ is a smooth Deligne-Mumford
stack endowed with a (forgetful) finite and \'etale map $\skd\to \mg$ of degree $(2g)^{\kdg}$.
We have a diagram
\begin{equation}\label{E:diag-theta}
\xymatrix{
& \F \ar[dd]^{\pi} \ar[dr]^{\w{s}} \ar[dl]_{\w{p}_2}Ê\ar@{}[dddr]|{\square}  Ê\ar@{}[dddl]|{\square} & \\
\univ \ar[dd]_{\pi_2}& & \unimid \ar[dd]^{\pi_1}\\
 &  \skd\times_{\mg} \pdt \ar[dl]^{p_2} \ar[dr]_s & \\
 \pdt & & \pmidt \\
}
\end{equation}
where $p_2$ is the projection onto the second factor and  $s$ sends the element $(\C\to S, \eta, \L)\in \skd\times_{\mg} \pdt(S)$ into
$(\C\to S, \L \otimes \eta^{-\edg}) \in {\mathcal Jac}_{g-1,g}(S)$, where
$$\edg:=\frac{d-g+1}{\mcd}.$$
The universal family $\F$ is endowed with a universal line bundle $\L_d$ of relative degree $d$ which is the pulled-back from $\univ$ and
a universal spin line bundles $\eta_{\kdg}$ which is pulled-back from the universal family above $\skd$. By the definition of the morphism
$s$, we get that
\begin{equation}\label{E:pullback-L}
\w{s}^*(\L_{g-1})= \eta_{\kdg}^{-\edg}\otimes \L_d.
\end{equation}

The relation between the line bundle $\Xi\in \Pic(\pdt)$ and the universal theta divisor $\Theta\subset \pmidt$ is provided by the following.

%Consider the diagram
%$$\xymatrix{
%\skd\times_{\mg} \pd \ar[dd]_{\Phi_d'} \ar[dr]_{q_d'}\ar[drr]^{s_d}& & \\
%& \pd \ar[d]_{\Phi_d}& \pmid \ar[dl]^{\Phi_{g-1}}\\
%\skd \ar[r]^{q_d}& \mg & \\
%}
%$$
%where $s_d$ is the $\Gm$-rigidification of the map of $\Gm$-stacks $\w{s_d}: \skd\times_{\mg} \pdt \to {\mathcal Jac}_{g-1,g}$
%that sends the element $(\C\to S, \eta, \L)\in \skd\times_{\mg} \pdt(S)$ into $(\C\to S, \L \otimes \eta^{-\edg}) \in {\mathcal Jac}_{g-1,g}(S)$, where
%$$\edg:=\frac{d-g+1}{\mcd}.$$

\begin{lemma}\label{L:Xi-theta}
We have that
$$p_2^*(\Xi)=s^*\O(\kdg\cdot \Theta)\otimes  \langle \eta_{\kdg}, \eta_{\kdg}\rangle_{\pi}^{-\frac{\kdg (\kdg+\edg) \edg}{2}}.$$
 \end{lemma}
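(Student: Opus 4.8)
The goal is to compute $p_2^*(\Xi)$ in terms of the pullback of the universal theta divisor via $s$ and a Deligne-pairing correction term. The natural strategy is to pull back the definition of $\Xi$ (given in \eqref{D:defi-Xi}) via $p_2$, express everything using the functoriality of the determinant of cohomology and the Deligne pairing along the diagram \eqref{E:diag-theta}, and then use the relation \eqref{E:pullback-L} to convert tautological bundles built from $\L_d$ and $\omega_\pi$ on $\F$ into tautological bundles built from $\w{s}^*(\L_{g-1})$, so that the already-established identity \eqref{Xi-theta1} (namely $\Xi = \O(\Theta)$ in degree $g-1$) can be invoked after pulling back through $s$.

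\textbf{Key steps.} First I would recall from the definition \eqref{E:taut-lb} of the tautological line bundles and the identity \eqref{Xi-theta1} that on $\pmidt$ one has $\O(\Theta)=d_{\pi_1}(\L_{g-1})$, and hence by the functoriality of the determinant of cohomology (Fact \ref{F:prop-det}(iv)) applied to the Cartesian square with $\w{s}$ and $s$ in \eqref{E:diag-theta},
\begin{equation*}
s^*\O(\Theta)=s^* d_{\pi_1}(\L_{g-1})=d_{\pi}(\w{s}^*\L_{g-1})=d_{\pi}\left(\eta_{\kdg}^{-\edg}\otimes \L_d\right),
\end{equation*}
where the last equality uses \eqref{E:pullback-L}. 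Similarly, $p_2^*(\Xi)$ can be written, using Fact \ref{F:prop-det}(iv) applied to the left Cartesian square, as a combination of $d_\pi(\L_d)=d_\pi(\omega_\pi^0\otimes \L_d)$ and $d_\pi(\omega_\pi\otimes \L_d)$ with the exponents $\frac{d+g-1}{\mcd}$ and $-\frac{d-g+1}{\mcd}$ coming from \eqref{D:defi-Xi}. The heart of the computation is then to compare $\kdg$ copies of $d_\pi(\eta_{\kdg}^{-\edg}\otimes \L_d)$ with $p_2^*(\Xi)$. Here I would use the bilinearity formula relating the determinant of cohomology of a twist to Deligne pairings: for line bundles $\mathcal{A},\mathcal{B}$ on the total space of a family of nodal curves, \eqref{E:Deligne} gives $d_\pi(\mathcal{A}\otimes \mathcal{B})$ in terms of $d_\pi(\mathcal{A})$, $d_\pi(\mathcal{B})$ and $\langle \mathcal{A},\mathcal{B}\rangle_\pi$. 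Applying this repeatedly to expand $d_\pi(\eta_{\kdg}^{-\edg}\otimes \L_d)$, and using the constraint $\eta_{\kdg}^{\otimes \kdg}\cong\omega_\pi$ (which lets one replace $\langle \eta_{\kdg},\omega_\pi\rangle_\pi$ by $\kdg\langle\eta_{\kdg},\eta_{\kdg}\rangle_\pi$, and likewise convert $K_{0,1}=\langle\omega_\pi,\L_d\rangle_\pi$-type terms into $\eta_{\kdg}$-pairings), I would collect all the $\langle\eta_{\kdg},\eta_{\kdg}\rangle_\pi$ contributions. The bookkeeping should produce exactly the exponent $-\frac{\kdg(\kdg+\edg)\edg}{2}$ once the definitions $\kdg=\frac{2g-2}{\mcd}$ and $\edg=\frac{d-g+1}{\mcd}$ are substituted.

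\textbf{Main obstacle.} The delicate part is the numerology of the Deligne-pairing exponents: one must carefully track the quadratic terms arising from expanding $d_\pi(\eta_{\kdg}^{-\edg}\otimes\L_d)$ (the Deligne pairing is bilinear, so a power $\eta_{\kdg}^{-\edg}$ contributes both a linear term $\langle\eta_{\kdg}^{-\edg},\L_d\rangle_\pi$ and, after the repeated application of \eqref{E:Deligne}, a term $\langle\eta_{\kdg},\eta_{\kdg}\rangle_\pi^{\binom{-\edg}{2}}$-style contribution), and then reconcile these with the two exponents $\frac{d+g-1}{\mcd}$ and $-\frac{d-g+1}{\mcd}$ appearing in the definition of $\Xi$. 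In particular one must verify that the $d_\pi(\L_d)$ and $\langle\omega_\pi,\L_d\rangle_\pi$ contributions cancel appropriately so that only the self-pairing $\langle\eta_{\kdg},\eta_{\kdg}\rangle_\pi$ survives as the correction term, with the precise exponent $-\frac{\kdg(\kdg+\edg)\edg}{2}$. I expect the cleanest route is to carry out this comparison at the level of first Chern classes in $A^1$ using the push-pull formulas of Fact \ref{F:prop-det}(iii) and Fact \ref{F:prop-Deligne}(ii), where the relations $c_1(\eta_{\kdg})=\frac{1}{\kdg}c_1(\omega_\pi)$ and $\edg\cdot\kdg=d-g+1$ turn the identity into an elementary (if fiddly) polynomial identity in the classes $\pi_*(c_1(\omega_\pi)^2)$, $\pi_*(c_1(\omega_\pi)c_1(\L_d))$ and $\pi_*(c_1(\L_d)^2)$, and then invoke Fact \ref{Fact-Pic}(i) to lift the equality of Chern classes back to an equality of line bundles.
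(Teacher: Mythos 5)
Your proposal is correct, and its skeleton coincides with the paper's: pull back the definition of $\Xi$ along $p_2$, use functoriality of the determinant of cohomology on the two Cartesian squares of \eqref{E:diag-theta} together with \eqref{E:pullback-L} and \eqref{Xi-theta1} to write $p_2^*(\Xi)=d_{\pi}(\L_d)^{\kdg+\edg}\otimes d_{\pi}(\eta_{\kdg}^{\kdg}\otimes\L_d)^{-\edg}$ and $s^*\O(\kdg\Theta)=d_{\pi}(\eta_{\kdg}^{-\edg}\otimes\L_d)^{\kdg}$, and then do the bookkeeping. The only divergence is in how that bookkeeping is carried out. The paper applies Grothendieck--Riemann--Roch to $\eta_{\kdg}^n\otimes\L_d^m$ to get a closed formula \eqref{E:spin-GRR} for $c_1(d_{\pi}(\eta_{\kdg}^n\otimes\L_d^m))$ in terms of the three Deligne self- and cross-pairings, takes the difference of first Chern classes, and lifts back via the isomorphism $c_1$ of Fact \ref{Fact-Pic}\eqref{Fact-Pic1} --- which is exactly the route you single out as ``cleanest'' in your last paragraph. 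Your other suggested route, the purely formal expansion via \eqref{E:Deligne} and bilinearity, also works and is arguably tidier: from $d_{\pi}(A\otimes B)=\langle A,B\rangle_{\pi}\otimes d_{\pi}(A)\otimes d_{\pi}(B)\otimes d_{\pi}(\O)^{-1}$ one gets $d_{\pi}(L^n)=\langle L,L\rangle_{\pi}^{\binom{n}{2}}\otimes d_{\pi}(L)^{n}\otimes d_{\pi}(\O)^{1-n}$, hence $d_{\pi}(\eta_{\kdg}^a\otimes\L_d^b)=\langle\eta_{\kdg},\L_d\rangle_{\pi}^{ab}\otimes\langle\eta_{\kdg},\eta_{\kdg}\rangle_{\pi}^{\binom{a}{2}}\otimes\langle\L_d,\L_d\rangle_{\pi}^{\binom{b}{2}}\otimes d_{\pi}(\eta_{\kdg})^a\otimes d_{\pi}(\L_d)^b\otimes d_{\pi}(\O)^{1-a-b}$; substituting $(a,b)=(\kdg,1)$ and $(-\edg,1)$ one checks that every term except the $\eta_{\kdg}$-self-pairing cancels in $p_2^*(\Xi)\otimes s^*\O(\kdg\Theta)^{-1}$, leaving the exponent $-\edg\binom{\kdg}{2}-\kdg\binom{-\edg}{2}=-\tfrac{\kdg(\kdg+\edg)\edg}{2}$. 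This variant needs no GRR and stays integral throughout (all exponents are binomial coefficients), whereas the Chern-class route uses the rational relation $c_1(\eta_{\kdg})=\tfrac{1}{\kdg}c_1(\omega_{\pi})$ and denominators of $12$ in intermediate steps --- a feature the paper's proof shares, so it is not a defect of your proposal, but it is worth noting that your first suggestion sidesteps it entirely.
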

\begin{proof}
By the definition \eqref{D:defi-Xi} of $\Xi$ and the standard properties of the determinant of cohomology,
we compute
\begin{equation}\label{E:theta-LHS}
p_2^*(\Xi)=d_{\pi}(\L_d)^{\frac{d+g-1}{(2g-2,d+1-g)}} \otimes d_{\pi}(\omega_{\pi}\otimes \L_d)^{-\frac{d-g+1}{(2g-2, d+1-g)}}=
d_{\pi}(\L_d)^{\kdg+\edg} \otimes d_{\pi}(\eta_{\kdg}^{\kdg}\otimes \L_d)^{-\edg}.
\end{equation}
Using \eqref{Xi-theta1} and \eqref{E:pullback-L}  together with standard properties of the determinant of cohomology, we get that
\begin{equation}\label{E:theta-RHS}
s^*(\O(\kdg \cdot \Theta))=s^*(d_{\pi_1}(\L_{g-1})^{\kdg})=d_{\pi}( \eta_{\kdg}^{-\edg}\otimes \L_d)^{\kdg}.
\end{equation}
In order to compare \eqref{E:theta-LHS} and \eqref{E:theta-RHS}, we apply
the Grothedieck-Riemann-Roch theorem to the sheaf $\eta_{\kdg}^n\otimes \L_d^{m}$ on the universal family $\pi:\F\to  \skd\times_{\mg} \pdt$.
After  some easy computations similar to the ones done in the proof of Theorem \ref{T:taut-rel} which we leave to the reader, we get that
\begin{equation}\label{E:spin-GRR}
c_1(d_{\pi}(\eta_{\kdg}^n \otimes \L_d^m))= \frac{6 n^2- 6 \kdg n +\kdg^2}{12}c_1(\langle \eta_{\kdg}, \eta_{\kdg}\rangle_{\pi})+\frac{2mn-\kdg m}{2}c_1(\langle \eta_{\kdg},\L_d\rangle_{\pi})
+\frac{m^2}{2}c_1(\langle \L_d, \L_d\rangle_{\pi}).
\end{equation}
Using the above formula \eqref{E:spin-GRR}, we can compute the difference between the first Chern classes of the line bundles in \eqref{E:theta-LHS} and in \eqref{E:theta-RHS}:
$$c_1(p_2^*(\Xi))-c_1(s^*(\O(\kdg \cdot \Theta)))=(\kdg+\edg)c_1(d_{\pi}(\L_d))-\edg c_1(d_{\pi}(\eta_{\kdg}^{\kdg}\otimes \L_d))-\kdg c_1(d_{\pi}( \eta_{\kdg}^{-\edg}\otimes \L_d))=
$$
$$=-\frac{\kdg(\kdg+\edg)\edg}{2}c_1(\langle \eta_{\kdg}, \eta_{\kdg}\rangle_{\pi}).
$$
The result now follows since $c_1:\Pic( \skd\times_{\mg} \pdt)\to A^1( \skd\times_{\mg} \pdt)$ is an isomorphism (see Fact \ref{Fact-Pic}\eqref{Fact-Pic1}).

\end{proof}

\begin{remark}
 Using the computation of the Picard group of the moduli stacks of spin curves by Jarvis \cite{Jar}, it can be proved that the pull-back morphism
$p_2^*:\Pic(\pdt)\to \Pic( \skd\times_{\mg} \pdt)$ is injective. Therefore, Lemma \ref{L:Xi-theta} uniquely determines the line bundle $\Xi$.
However, while the definition \eqref{D:defi-Xi} extends naturally to  $\pdtb$, we do not know how to extend the formula of Lemma \ref{L:Xi-theta} to
$\pdtb$. The problem is that we do not know how to extend the correspondence between $\pdt$ and $\pmidt$ given in diagram \eqref{E:diag-theta} to a correspondence
between $\pdtb$ and $\pmidtb$.

%It remains to show the uniqueness of $\ldg$. This amounts to show the injectivity of the
%pull-back map $(q_d')^*:\Pic(\pd)\to \Pic(\skd\times_{\mg} \pd)$. Clearly, using that $\Phi_d$ and
%$\Phi_d'$ have connected fibers, we get the following commutative diagram with exact horizontal rows
%$$\xymatrix{
 %\Pic(\mg) \ar@{^{(}->}^{\Phi_d^*}[r] \ar^{q_d^*}[d]& \Pic(\pd)\ar^{(q_d')^*}[d]
%\ar@{->>}[r] &\RPic(\pd) \ar^{\ov{(q_d')^*}}[d]  \\
 %\Pic(\skd) \ar@{^{(}->}^(.4){(\Phi_d')^*}[r] & \Pic(\skd\times_{\mg} \pd)
%\ar@{->>}[r] &\RPic(\skd\times_{\mg} \pd)
%}$$
%The map $q_d^*$ is known to be injective (it follows easily from \cite[Prop. 3.3]{Jar}) and
%$ \ov{(q_d')^*}$ is injective since $q_d'$ is a finite map.
%Therefore, from the above commutative diagram, we deduce that also $(q_d')^*$ is injective,
%as required.

% We will now construct a line bundle $\ldg\in \Pic(\pd)$, whose image in $\RPic(\pd)$ will turn out to be a generator
%of $\RPic(\pd)$. Indeed, our construction of $\ldg$ is inspired by  \cite[Rmk. 2, p. 849]{kouvidakis} and thus
%provides a positive answer to \cite[Rmk. 3, p. 850]{kouvidakis}. See also \cite[Rmk. 1, p. 521]{kou2} for a different
%construction of $\ldg$.
\end{remark}

\subsection{Relation between  $\pdt$ and the universal $d$-th symmetric product}
\label{S:univ-sym}

The referee pointed out to us an interesting connection between the Picard groups of $\pdt$ and  
of the $d$-th symmetric product $\Sym^d \M_{g,1}$ of the universal curve $\M_{g,1}\to \M_{g}$, when $d>2g-2$. 

The fiber of the stack $\Sym^d \M_{g,1}$ (for $d\geq 1$) over a scheme $S$ is the groupoid whose objects are families of smooth curves
$\C\to S$ of genus $g$ together with an effective divisor $\D\subset \C$ of relative degree $d$ over $S$, and whose arrows are the obvious isomorphisms. Consider the universal Abel-Jacobi morphism
\begin{equation}\label{E:Abel-Jac}
\begin{aligned}
\w{A}_{d}: \Sym^d\M_{g,1} & \longrightarrow \pdt\\
(\C\to S, \D) & \mapsto (\C\to S, \O_{\C}(\D)),
\end{aligned}
\end{equation}
and the induced commutative diagram 
\begin{equation}\label{E:diag-Abel}
\xymatrix{
\Sym^d\M_{g,1}\times_{\M_g} \M_{g,1} \ar[d]_{\w{\pi}} \ar@{}[drr]|{\square} \ar[rr]^{\h{A}_d} & &\u\ar[d]^{\pi}  \\
\Sym^d\M_{g,1} \ar[rr]^{\w{A}_d} \ar[rd]_{A_d} & &\pdt\ar[ld]^{\nu_d}  \\
& \pd \ar[d]^{\Phi_d} & \\
& \M_g &
}
\end{equation}
If $d>2g-2$ then $A_d$ is a projective bundle of relative dimension $d-g$ whose class
$[A_d]$ in the Brauer group $\Br(\pd)$ is equal to the class $[\nu_d]$ of the $\Gm$-gerbe $\nu_d$, as it follows easily from \cite[Lemma 2.1]{MR}.
Therefore, the exact sequence \eqref{succ-Pic} induced by the $\Gm$-gerbe $\nu_d$ maps into the analogous 
exact sequence for the projective bundle $A_d$:
\begin{equation}\label{E:2exseq}
\xymatrix{
 0\ar[r] & \Pic(\pd)\ar[r]^(0.4){\nu_d^*} \ar@{=}[d]& \Pic(\pdt) \ar[r]^(0.6){\res} \ar[d]^{\w{A}_d^*}& \Z \ar[d]^{\cong} \ar[r]^(0.3){\obs}&  \Br(\pd) \ar@{=}[d]\\
  0\ar[r] & \Pic(\pd)\ar[r]^(0.4){A_d^*} & \Pic(\Sym^d \M_{g,1}) \ar[r]^(0.7){\w{\res}}& \Z \ar[r]^(0.3){\w{\obs}}&  \Br(\pd)\\
}
\end{equation}
where the maps in the second exact sequence of the above diagram admit the following interpretation (which one can easily check
via standard cocycle computations): $A_d^*$ is the pull-back map induced by $A_d$;
$\w{\res}$ is the restriction to the generic fiber of $A_d$ and $\w{\obs}$ (the obstruction map) sends $1\in \Z$ into the class $[A_d]$ of the
projective bundle  $A_d$ in the (cohomological) Brauer group  $\Br(\pd):=H^2_{\rm{\acute et}}(\pd, \Gm)$.

The above diagram \eqref{E:2exseq} implies that the pullback map $\w{A}_d^*$ is an isomorphism. Moreover the pullback of the tautological line bundles on $\pdt$ can be expressed as tautological line 
bundles on $\Sym^d \M_{g,1}$. Indeed, from the Cartesian square at the top of diagram \eqref{E:2exseq}, we get that 
\begin{equation}\label{E:pull-Poin}
\h{A}_d^*(\omega_{\pi})=\omega_{\w{\pi}} \hspace{0.8cm} \text{ and }Ê\hspace{0.8cm} \h{A}_d^*(\L_d)=\O(\D_d),
\end{equation}Ê
where $\omega_{\w{\pi}}$ is the relative dualizing line bundle for  $\w{\pi}$ and  $\D_d$ is the universal degree-$d$ divisor on $\Sym^d\M_{g,1}\times_{\M_g} \M_{g,1}$.
Using the functoriality of the determinant of cohomology, we get 
\begin{equation}\label{E:pull-tauto}
\begin{aligned}
& \w{A}_d^*(\Lambda(1,0))=d_{\w{\pi}}(\omega_{\w{\pi}}):=\w{\Lambda}(1,0), \\
& \w{A}_d^*(\Lambda(0,1))=d_{\w{\pi}}(\O(\D_d)):=\w{\Lambda}(0,1), \\
& \w{A}_d^*(\Lambda(1,1))=d_{\w{\pi}}(\omega_{\w{\pi}}(\D_d)):=\w{\Lambda}(1,1). 
\end{aligned}
\end{equation} 
Therefore, combining Theorem \ref{T:MainThmA}\eqref{T:MainThmA1}, \eqref{E:2exseq} and \eqref{E:pull-tauto}, we deduce the following 
\begin{cor}\label{C:Pic-Sym}
Assume that $g\geq 3$ and that $d>2g-2$. 
The Picard group of $\Sym^d\M_{g,1}$ is freely generated by
$\w{\Lambda}(1,0)$, $\w{\Lambda}(0,1)$ and $\w{\Lambda}(1,1)$.
\end{cor}

\begin{remark}
The referee pointed out to us that Corollary \ref{C:Pic-Sym} could be proved independently  from Theorem \ref{T:MainThmA}\eqref{T:MainThmA1}, using the computations contained in \cite{kou3}.
In turn, this can be used to give an alternative proof  of Theorems \ref{T:MainThmA}\eqref{T:MainThmA1} and \ref{T:MainThmB}\eqref{T:MainThmB1} (at least for $d>2g-2$).
However, this alternative approach does not give a modular description of the generators of the Picard groups of $\pdtb$ and of $\pdb$, since it is not known how to extend the Abel-Jacobi morphism 
over the boundary of $\mgb$.
\end{remark}

\section{Relation with the  moduli space $\ov{J_{d,g}}$}\label{S:rel-modspace}

The aim of this section is to relate the Picard group of the stack $\pdb$ with the divisor
class group and the rational Picard group of its moduli space $\ov{J_{d,g}}$, computed by Fontanari
in \cite[Thm. 5, Cor. 1]{fontanari}, based upon the results of Kouvidakis \cite{kouvidakis}.

Recall that, given a variety $Y$, the divisor class group $\Cl(Y)$ is the group of Weil divisors modulo
rational equivalence. If $Y$ is normal, denoting by $Y_{\rm reg}$ the open subset of regular points
of $Y$, then we have that
\begin{equation}
\Pic(Y)\hookrightarrow \Cl(Y)\cong \Cl(Y_{\rm reg})\cong \Pic(Y_{\rm reg}).
\end{equation}
Recall that $\ov{J}_{d,g}$ is a normal variety (see Theorem \ref{T:st-sp}) and it is endowed with a morphism $\phi_d:\ov{J}_{d,g}\to \ov{M}_g$ into the coarse moduli space of stable curves of genus $g$ (see diagram \eqref{big-dia}).

%The divisor class group and the Picard group with rational coefficients of $\ov{J}_{d,g}$ were
%computed by Fontanari (see \cite[Thm. 4, Cor. 1]{fontanari}), based upon the result of Kouvidakis (see
%\cite{kouvidakis}) for the Picard group of $J_{d,g}^0$.

\begin{thm}[Fontanari]\label{divclass}
Set $\w{\Delta_i}:=\phi_d^{-1}(\Delta_i)\subset \ov{J}_{d,g}$ for $i=0,\cdots,[g/2]$.
\begin{enumerate}[(i)]
%\item The divisor class group $\Cl(J_{d,g})$ is freely generated over $\Z$ by $\phi_d^*(\lambda)$ and by a Cartier divisor %$L_{d,g}$ on $(J_{d,g})_{\rm reg}$, characterized by the property that
%the restriction of $L_{d,g}$ to the fiber $\phi_d^{-1}(C)=J^d(C)$ over a general point $C\in M_g$ has class equal to
%$\kdg [\Theta(C)]\in NS(J(C))$.
%, where $[\Theta(C)]\in NS(J(C))$ is the class of the theta divisor $\Theta(C)$ in the N\'eron-Severi group of the
%Jacobian $J(C)$ of $C$.
\item The divisors $\w{\Delta_i}$ are irreducible and we have an exact sequence
\begin{equation*}
0\to \bigoplus_{i=0}^{[g/2]} \Z\cdot \w{\Delta_i} \to \Cl(\ov{J}_{d,g})\to \Cl(J_{d,g})\to 0.
\end{equation*}
\item The natural inclusion $\Pic(\ov{J}_{d,g})\hookrightarrow \Cl(\ov{J}_{d,g})$ is of finite index, i.e. every Weil divisor on $\ov{J}_{d,g}$ is
$\Q$-Cartier.
\end{enumerate}
\end{thm}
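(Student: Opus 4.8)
The plan is to treat the two assertions by exploiting the GIT presentation $\ov{J}_{d,g}=H_d/\!/\PGL(r+1)$ of Theorem \ref{quot-stack} together with the independence of the boundary line bundles on $\pdb$ proved in Corollary \ref{C:ex-seq-rig}. I would begin part (i) with the irreducibility of $\w{\Delta}_i=\phi_d^{-1}(\Delta_i)$. Since $\Delta_i\subset \ov{M}_g$ is irreducible and the fibres of $\phi_d$ over its generic point are irreducible compactified Jacobians, it is enough to see that $\w{\Delta}_i$ is the image of a single irreducible family. Using that the geometric points of $\ov{J}_{d,g}$ are the strictly balanced pairs (Theorem \ref{quot-stack}(iii)), I would note that in the split case $\kdg\mid (2i-1)$ the generic points of the two stack divisors $\ov{\delta}_i^1,\ov{\delta}_i^2$ lying over $\delta_i$ carry the two extremal properly balanced multidegrees on a vine curve $C_1\cup C_2$; these are $S$-equivalent, their common closed-orbit representative being the strictly balanced bundle (of degree $1$ on the exceptional component) on the quasistable model $C_1\cup E\cup C_2$. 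Hence $\ov{\delta}_i^1$ and $\ov{\delta}_i^2$ have the same image $\w{\Delta}_i$ in the coarse space, which is thus irreducible. The surjection $\Cl(\ov{J}_{d,g})\twoheadrightarrow \Cl(J_{d,g})$ with kernel generated by the $\w{\Delta}_i$ is then the standard localisation sequence for the class group of a normal variety after removing a closed subset of pure codimension one (see \cite[Ch.~1]{Ful}); the content of part (i) therefore reduces to the injectivity on the left.

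Before proving injectivity I would dispatch part (ii), showing that $\ov{J}_{d,g}$ has only finite quotient singularities; this gives $\Q$-factoriality at once, since if locally $\ov{J}_{d,g}\cong \Spec R^{\Gamma}$ with $R$ regular and $\Gamma$ finite, then for a Weil divisor $D$ the (Cartier) pullback of $D$ to $\Spec R$ averages over $\Gamma$ to a Cartier divisor descending to $|\Gamma|\,D$. To get the finite quotient singularities I would apply Luna's \'etale slice theorem at a closed $\PGL(r+1)$-orbit on the smooth variety $H_d$ (Theorem \ref{Gie-Cap}); such orbits correspond exactly to strictly balanced pairs $(X,L)$. The crucial local point is that their stabilisers are finite: an automorphism scaling an exceptional component $E\cong\P^1$ while fixing its two nodes cannot be lifted to $L$, because $\deg_E L=1$ forces incompatible weights on the fibres of $L$ at the two nodes; consequently $\Aut(X,L)$ is a finite extension of the scalars $\Gm$, with finite image in $\PGL(r+1)$.

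With $\Q$-factoriality in hand, the pullback along $\Psi_d:\pdb\to \ov{J}_{d,g}$ is defined on $\Q$-Cartier classes and yields a homomorphism $\Psi_d^*:\Cl(\ov{J}_{d,g})\to \Pic(\pdb)\otimes\Q$. I would then compute $\Psi_d^*\O(\w{\Delta}_i)$ directly from the local structure of $\Psi_d$ along the boundary, obtaining $\O(\ov{\delta}_i)$, $\O(\ov{\delta}_i^1+\ov{\delta}_i^2)$ or $\O(2\ov{\delta}_i)$ in the three cases $\kdg\nmid(2i-1)$, $\kdg\mid(2i-1)$ with $i\neq g/2$, and $i=g/2$. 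Since distinct $\w{\Delta}_i$ map to classes supported on disjoint subsets of the generators $\{\O(\ov{\delta}_i),\O(\ov{\delta}_i^1),\O(\ov{\delta}_i^2)\}$, which are linearly independent in $\Pic(\pdb)$ by Corollary \ref{C:ex-seq-rig}, the composite $\bigoplus_i\Z\cdot\w{\Delta}_i\to \Pic(\pdb)\otimes\Q$ is injective; hence so is $\bigoplus_i\Z\cdot\w{\Delta}_i\to \Cl(\ov{J}_{d,g})$, completing part (i).

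I expect the main obstacle to be the local comparison underlying this last computation of $\Psi_d^*\O(\w{\Delta}_i)$: one must control precisely how the two stack branches $\ov{\delta}_i^1,\ov{\delta}_i^2$ (and the double branch occurring when $i=g/2$) are identified by $\Psi_d$ and with what ramification, which requires an explicit local model of $\Psi_d$ near a general boundary point in the spirit of the deformation-theoretic analysis carried out in the proof of Theorem \ref{bound-pdb}.
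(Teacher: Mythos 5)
This theorem is not proved in the paper at all: it is imported verbatim from Fontanari \cite[Thm.~5, Cor.~1]{fontanari}, so there is no in-paper argument to compare yours with. Judged on its own terms, your proposal has a genuine gap in part (ii), and that gap propagates into your proof of the left-exactness in part (i). The claim that the closed $\PGL(r+1)$-orbits in $H_d$ have \emph{finite} stabilisers is false. The closed orbits correspond to strictly balanced pairs $(X,L)$, and when $\kdg\mid(2i-1)$ these necessarily live on a quasistable curve $X=C_1\cup E\cup C_2$ with an exceptional component $E$. Your argument that the $\Gm$ scaling $E$ (and fixing its two nodes) cannot lift to $L$ because $\deg_EL=1$ forces a weight mismatch at the two nodes overlooks that one may absorb that mismatch by simultaneously rescaling $L$ on the adjacent component $C_2$ by a global scalar. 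The resulting lift is a genuine automorphism of $(X,L)$, and indeed the paper itself records $\Aut(X,L)\cong\Gm^2$ for exactly these points (proof of Theorem \ref{bound-pdb}, quoting \cite[Proof of Thm.~1.5, Case~(3)]{BFV}, where $\Gm^2$ acts by $(\lambda,\mu)\cdot(u_1,v_1)=(\lambda\mu^{-1}u_1,\lambda^{-1}\mu v_1)$). So the stabiliser in $\PGL(r+1)$ is a one-dimensional torus, the Luna slice exhibits $\ov{J}_{d,g}$ locally as a quotient by $\Gm$ rather than by a finite group, and the ``finite quotient singularities $\Rightarrow$ $\Q$-factorial'' route collapses. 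Part (ii) therefore requires a different argument (this is precisely the nontrivial content Fontanari supplies, by explicitly comparing $\Pic(\ov{J}_{d,g})$ with the computed class group); note also that GIT quotients of smooth varieties by reductive groups are not $\Q$-factorial in general, so some input beyond the quotient presentation is unavoidable.

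The rest of your plan is salvageable but needs rerouting. Since your homomorphism $\Psi_d^*:\Cl(\ov{J}_{d,g})\to\Pic(\pdb)\otimes\Q$ was only defined via $\Q$-factoriality, the injectivity argument for part (i) is currently unsupported. A clean repair: use $\Cl(\ov{J}_{d,g})\cong\Pic\bigl((\ov{J}_{d,g})_{\rm reg}\bigr)$, pull back line bundles along $\Psi_d$ over the regular locus, and extend uniquely over $\Psi_d^{-1}\bigl(\ov{J}_{d,g}\setminus(\ov{J}_{d,g})_{\rm reg}\bigr)$, which has codimension at least two in the smooth stack $\pdb$ (in fact the paper's local model shows $\ov{J}_{d,g}$ is even regular at the generic points of the $\w{\Delta}_i$, since $\Spf k[[x,y]]^{\Gm}=\Spf k[[xy]]$). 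This gives an integral map $\Cl(\ov{J}_{d,g})\to\Pic(\pdb)$ without any $\Q$-factoriality, after which your computation of $\Psi_d^*\O(\w{\Delta}_i)$ via the local model of \cite{BFV} and the linear independence from Corollary \ref{C:ex-seq-rig} does yield the injectivity in part (i), and your S-equivalence argument for the irreducibility of $\w{\Delta}_i$ is sound.
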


We have therefore a commutative diagram with exact rows:

$$\xymatrix{
0\ar[r] & \bigoplus_{i=0}^{[g/2]} \Z\cdot \w{\Delta_i} \ar[r] \ar[d]^{\alpha_d}&  \Cl(\ov{J}_{d,g})\ar[r] \ar^{\Psi_d^*}[d]&  \Cl(J_{d,g})\ar[r] \ar[d]^{\beta_d}&  0\\
0 \ar[r] & \bigoplus_{\stackrel{\kdg  \nmid\: \:2i-1}{\text{ or } i=g/2}}\langle \O(\ov{\delta}_i)\rangle
\bigoplus_{\stackrel{\kdg \mid 2i-1}{\text{ and } i\neq g/2}}\langle \O(\ov{\delta}_i^1),\O(\ov{\delta}_i^2)\rangle\ar[r]&
\Pic(\pdb)\ar[r] & \Pic(\pd)\ar[r] & 0,
}$$
where the map $\Psi_d^*$ is the pull-back map induced by $\Psi_d:\pdb\to \ov{J}_{d,g}$.
We can now prove Theorem \ref{T:MainThmC} from the introduction.

%In the next theorem, we are going to use the Notation \ref{nota-div}.

\begin{proof}[Proof of Theorem \ref{T:MainThmC}]
In order to prove part \eqref{T:MainThmC1} of Theorem \ref{T:MainThmC}, consider the commutative diagram,
obtained by pulling back divisors along the two fibrations $\pd\to \mg$ and $J_{d,g}\to M_g$:
$$\xymatrix{
 & \Cl(M_g) \ar@{=}[d] & \Cl(J_{d,g}) \ar@{=}[d] & \\
0\ar[r] & \Pic ((M_g)_{\rm reg}) \ar[r] \ar^{\gamma_d}[d]&  \Pic ((J_{d,g})_{\rm reg}) \ar[r]
\ar^{\beta_d}[d]&  \RPic((J_{d,g})_{\rm reg}) \ar[r] \ar[d]^{\ov{\beta_d}}&  0\\
0 \ar[r] & \Pic(\mg)\ar[r]& \Pic(\pd)\ar[r] & \RPic(\pd)\ar[r] & 0,
}$$
The map $\gamma_d$ is well-known to be an isomorphism (see e. g. \cite[Prop. 2]{arbcorn}).
The map $\ov{\beta_d}$ is an isomorphism since the group of rational determined line bundles
$\RPic$ of a fibration is birational on the base (see \cite[Lemma 1.3]{Cil}) and the map $\pd\to \mg$
is representable. Since the rows of the above diagram are exact, we conclude that $\beta_d$ is an
isomorphism, q.e.d.

In order to prove part \eqref{T:MainThmC2} of Theorem \ref{T:MainThmC}, we need a  local description of the morphism $\Psi_d:\pdb\to \ov{J}_{d,g}$
at the general point of $\w{\Delta_i}$. This was carried on in \cite[Proof of Thm.  1.5]{BFV} for the morphism
$\nu_d\circ \Psi_d:\pdtb\to \ov{J}_{d,g}$, but it is very easy to adapt the description in loc. cit. to the morphism
$\Psi_d$ (simply by passing to the $\Gm$-rigidification).

If $\kdg\nmid(2i-1)$ (which corresponds  to the cases (1) and (2) of loc. cit.) then the morphism $\Psi_d$ is an isomorphism
locally at the general point of $\w{\Delta_i}$ (see \cite[p. 25]{BFV}). Therefore $\Psi_d^*(\w{\Delta_i})=\O(\ov{\delta}_i)$.

If $\kdg \mid(2i-1)$ (which corresponds to the case (3) of loc. cit.) then the morphism $\Psi_d$ looks like (after neglecting
trivial coordinates)
$${\mathcal X}:=[\Spf k[[x,y]]\widehat{\otimes} A/\Gm] \stackrel{p}{\longrightarrow}
X:=\Spf k[[x,y]]/\Gm\widehat{\otimes} A=\Spf k[[xy]]\widehat{\otimes} A,$$
where $A=\Spf k[[y_1,\cdots,y_{4g-4}]]$, $\Gm$ acts via $\lambda\cdot (x,y)=(\lambda x, \lambda^{-1} y)$
and trivially on $A$ (see \cite[p. 26]{BFV}).
In this local description, the divisor $\w{\Delta_i}$ corresponds to the divisor $(xy=0)$ on $X$ and
the divisors $\w{\delta}_i^1$ and $\w{\delta}_i^2$ correspond to the divisors $(x=0)$ and $(y=0)$ on
${\mathcal X}$ (note that in the particular case $i=g/2$ and $\kdg\mid(g-1)$, the divisor $\w{\delta}_{g/2}$, even
though irreducible, locally analytically splits into two components, which we can call $\w{\delta}_{g/2}^1$ and $\w{\delta}_{g/2}^2$,
so that the above description remains valid also in this case).
From the explicit form of the map $p$, it is clear that $p^*(xy=0)=(x=0)+(y=0)$, from which we deduce that
$$\Psi_d^*(\w{\Delta}_i)=
\begin{cases}
\O(\ov{\delta}_i^1+\ov{\delta}_i^2) & \text{ if } i < g/2,\\
\O(2\ov{\delta}_{g/2}) & \text{ if } i=g/2.
\end{cases}
$$
Part \eqref{T:MainThmC2} is now proved.

%Consider now the commutative diagram, obtained by pulling-back divisors along the natural maps
%indicated in the diagram (\ref{big-dia}):
%$$\xymatrix{
%\Cl(\ov M_g)  \ar[r]^{\phi_d^*} \ar^{f_d^*}[d]&
%\Cl(\ov{J}_{d,g})  \ar^{\Psi_d^*}[d] & \\
% \Pic(\mgb)\ar[r]^{\Phi_d^*}& \Pic(\pdb) ,
%}$$
%By definition of $D_i$ (see Theorem \ref{divclass}), we have that $\phi_d^*(\Delta_i)=D_i$.
%It is well-known (see \cite[Prop. 2]{arbcorn}) that
%$$f_d^*(\Delta_i)=
%\begin{cases}
%\delta_i & \text{ if } i\neq 1, \\
%2 \delta_i & \text{ if } i=1.
%\end{cases}
%$$
%Using this and the equations (\ref{pull-back-delta_i}), we conclude part (2) from the commutativity of the above
%diagram.
\end{proof}

\end{document}